\documentclass[12pt]{article}
\usepackage[margin=1in]{geometry}
\usepackage{amsthm, amsmath,amsfonts,amssymb,euscript,hyperref,graphics,color,slashed,mathrsfs}
\usepackage{graphicx}
\usepackage{comment}
\usepackage{import}
\usepackage{tikz}
\usepackage{latexsym}
\usepackage{mathtools}
\usepackage{appendix}
\usepackage{stmaryrd}

\makeatletter
\newsavebox{\@brx}
\newcommand{\llangle}[1][]{\savebox{\@brx}{\(\m@th{#1\langle}\)}%
	\mathopen{\copy\@brx\kern-0.5\wd\@brx\usebox{\@brx}}}
\newcommand{\rrangle}[1][]{\savebox{\@brx}{\(\m@th{#1\rangle}\)}%
	\mathclose{\copy\@brx\kern-0.5\wd\@brx\usebox{\@brx}}}
\makeatother

\makeatletter
\newcommand{\subjclass}[2][1991]{%
	\let\@oldtitle\@title%
	\gdef\@title{\@oldtitle\footnotetext{#1 \emph{Mathematics subject classification.} #2}}%
}
\newcommand{\keywords}[1]{%
	\let\@@oldtitle\@title%
	\gdef\@title{\@@oldtitle\footnotetext{\emph{Key words and phrases.} #1}}%
}
\makeatother

\newtheorem{theorem}{Theorem}[section]
\newtheorem*{theorem*}{Theorem}
\newtheorem{lemma}[theorem]{Lemma}

\newtheorem{corollary}[theorem]{Corollary}
\theoremstyle{remark}
\newtheorem{definition}[theorem]{Definition}
\newtheorem{remark}[theorem]{Remark}

\newtheorem*{acknowledgements}{\bf Acknowledgements}

\setlength{\textwidth}{16cm} \setlength{\oddsidemargin}{0cm}
\setlength{\evensidemargin}{0cm}

\numberwithin{equation}{section}

\begin{document}
\title {On the irrationality of certain $p$-adic zeta values}
	
\author{Li Lai, Cezar Lupu, Johannes Sprang}
\date{}
\subjclass[2020]{11J72 (primary), 11M06, 33C20 (secondary)}
\keywords{Irrationality, $p$-adic zeta value, Kubota--Leopoldt $p$-adic $L$-function, hypergeometric series.}
	
\maketitle
	
\begin{abstract}
A famous theorem of Zudilin states that at least one of the Riemann zeta values $\zeta(5), \zeta(7), \zeta(9), \zeta(11)$ is irrational. 
In this paper, we establish the $p$-adic analogue of Zudilin's theorem. 
As a weaker form of our result, it is proved that for any prime number $p \geqslant 5$ there exists an odd integer $i$ in the interval $[3,p+p/\log p+5]$ such that the $p$-adic zeta value $\zeta_p(i)$ is irrational.
\end{abstract}

\section{Introduction}

The Riemann zeta function is defined by the absolutely convergent series
\[
\displaystyle\zeta(s)=\sum_{n=1}^{\infty}\frac{1}{n^s},\quad\operatorname{Re}s>1.
\]
The values of $\zeta(s)$ at positive even integers are non-zero rational multiples of powers of $\pi$, \,---\,a result which goes back to Euler in 1735.
On the other hand, the values of $\zeta(s)$ when $s\geqslant 3$ is odd (referred to as \emph{odd zeta values}) are still far away from being fully understood. 
At this point, the state of the art is as follows. 
A groundbreaking result of Ap{\'e}ry \cite{Ape1979} from 1979 states that $\zeta(3)$ is irrational. 
The next breakthrough was achieved in 2001 by Ball and Rivoal \cite{BR2001, Riv2000} who proved that the dimension of the $\mathbb{Q}$-linear span of $\zeta(3),\zeta(5), \ldots, \zeta(s)$ is at least $((1-\varepsilon)/\log 2) \cdot \log s$ for any $\varepsilon>0$ and any sufficiently large odd integer $s \geqslant s_0(\varepsilon)$. 
In particular, there exist infinitely many irrationals among odd zeta values. 
Also, another amazing result is due to Zudilin \cite{Zud2001} who proved the following.
\begin{theorem*}[Zudilin, 2001]
At least one of $\zeta(5),\zeta(7),\zeta(9),\zeta(11)$ is irrational.
\end{theorem*}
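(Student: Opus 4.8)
The plan is to follow the method of Ap\'ery, Beukers and Rivoal: for every large integer $n$ I would manufacture a \emph{small rational linear form}
\[
r_n \;=\; a_n + b_n^{(5)}\zeta(5) + b_n^{(7)}\zeta(7) + b_n^{(9)}\zeta(9) + b_n^{(11)}\zeta(11), \qquad a_n,\, b_n^{(i)} \in \mathbb{Q},
\]
in which \emph{only} these four zeta values appear, and then control it arithmetically and analytically. Writing $D_n = \operatorname{lcm}(1,2,\dots,n) = e^{n(1+o(1))}$, the aim is to produce an explicit exponent $\mu$ and a real $\rho$ with $e^{\mu}\rho < 1$ such that: (i) $d_n a_n,\, d_n b_n^{(i)} \in \mathbb{Z}$ for some $d_n \mid D_n^{\mu}$; (ii) $\limsup_{n\to\infty} |r_n|^{1/n} \le \rho$; and (iii) $r_n \neq 0$ for all sufficiently large $n$. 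Granting this, if $\zeta(5),\zeta(7),\zeta(9),\zeta(11)$ were all rational with common denominator $Q$, then $Q\, d_n\, r_n$ would be a nonzero integer (by (i) and (iii)) of absolute value at most $Q\,(e^{\mu}\rho)^{n(1+o(1))}$, which tends to $0$ by (ii) --- impossible for $n$ large. Hence one of the four values is irrational.

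To construct $r_n$ I would use a \emph{very-well-poised hypergeometric series}. Fixing an auxiliary even multiple of $n$ together with a rational scaling parameter to be optimized at the end, I would take a rational function
\[
R_n(t) \;=\; (\text{normalization})\cdot\Bigl(t + \tfrac{n}{2}\Bigr)\,\frac{\prod_j \Gamma\!\bigl(t + \alpha_j n\bigr)}{\prod_j \Gamma\!\bigl(t + \beta_j n\bigr)},
\]
with the parameters arranged so that $R_n$ has poles only at finitely many integers, each of order at most $11$; so that $R_n(t) = O(t^{-2})$ as $|t|\to\infty$, whence $\sum_{t\ge 1} R_n(t)$ converges and its ``degree-one'' part sums to a rational number; and, crucially, so that $R_n(-n-t)=R_n(t)$. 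Decomposing $R_n$ into partial fractions and summing against the elementary identities $\sum_{t\ge 1}(t+k)^{-m}=\zeta(m)-\sum_{1\le j\le k} j^{-m}$ would present $r_n := \sum_{t\ge 1}R_n(t)$ as a $\mathbb{Q}$-linear combination of $1,\zeta(2),\dots,\zeta(11)$, and the symmetry forces the coefficients of the \emph{even} zeta values to vanish. The feature special to this theorem is that the numerator of $R_n$ carries one further factor, chosen so that the coefficient of $\zeta(3)$ vanishes identically as well (the construction being tuned to the window $[5,11]$): what survives is precisely a linear form in $1,\zeta(5),\zeta(7),\zeta(9),\zeta(11)$, whose coefficients $a_n,b_n^{(i)}$ are read off from the residues of $R_n$.

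Two quantitative inputs remain. For (ii) I would rewrite $r_n$ (or its tail) as a Barnes-type contour integral of $R_n$ and estimate it by the Laplace/saddle-point method, obtaining $\lim_{n\to\infty}|r_n|^{1/n}=e^{-C_0}$ for an explicit $C_0>0$ that varies continuously with the scaling parameter. For (i), up to a bounded factor the naive bound is $d_n \mid D_n^{11}$ --- one power of $D_n$ for each differentiation needed to extract the higher Laurent coefficients of the factorial quotients --- which yields only $\mu=11$ and is by itself too weak. The essential saving is that the residues are built from products of binomial coefficients whose $p$-adic valuations are systematically nonnegative, so that a prime-by-prime analysis in the spirit of the Rhin--Viola ``arithmetic method'' extracts a geometrically large common factor $\Phi_n$, with $\lim_{n\to\infty}\Phi_n^{1/n}=e^{\kappa}>1$, that may be cancelled: the effective exponent drops to $\mu=11-\kappa$. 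One then chooses the scaling parameter to minimize $e^{11-\kappa}\cdot e^{-C_0}$ and checks numerically that the minimum is $<1$ --- with only a modest margin, which is essentially why the statement concerns exactly this window of four values.

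I expect the arithmetic saving to be the main obstacle: establishing the claimed size of $\Phi_n$ needs a careful lower bound for $v_p$ of the common factor of \emph{all} the residues, uniform over every prime $p$ up to a fixed multiple of $n$, and it is precisely the very-well-poised structure --- which forces the top and bottom parameters of the binomial products to match --- that makes such a bound available. A second genuine difficulty is the non-vanishing (iii): since $R_n$ need not have constant sign, one cannot conclude $r_n\neq 0$ by inspection; instead I would identify $r_n$ with the value of a terminating very-well-poised hypergeometric series (or a contiguous relative), evaluate it in closed form by a classical summation or transformation formula, and read off that it is nonzero. With (i)--(iii) in hand, the argument of the first paragraph completes the proof.
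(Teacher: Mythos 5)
The paper does not actually prove this statement; it is quoted from Zudilin's work \cite{Zud2001, Zud2002} as motivation for the $p$-adic analogue that the paper itself establishes. Measured against Zudilin's known proof, your sketch gets the global architecture right (linear forms with $\operatorname{lcm}$-type denominators, a Rhin--Viola-style common factor $\Phi_n$, saddle-point asymptotics, the elementary irrationality criterion), but the step that makes the theorem what it is---producing a form in exactly $1,\zeta(5),\zeta(7),\zeta(9),\zeta(11)$---is not correct as you describe it. First, the parity is backwards: if $R_n(-n-t)=R_n(t)$ and $R_n(t)=\sum_{i,k}a_{i,k}(t+k)^{-i}$, then $a_{i,n-k}=(-1)^i a_{i,k}$, so $\sum_k a_{i,k}=0$ for $i$ \emph{odd}; summing $R_n(t)$ over $t\geqslant 1$ would therefore kill the odd zeta values and keep the even ones. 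For a plain sum you need the anti-symmetry $R_n(-n-t)=-R_n(t)$. Second, and more seriously, removing $\zeta(3)$ by ``one further numerator factor, chosen so that the coefficient of $\zeta(3)$ vanishes identically'' is unsubstantiated: no such tuned factor is known, and this is precisely the crux of why the window starts at $5$. The device that actually works (and is essentially Zudilin's) is to build the linear form from a derivative of the partial-fraction data: with poles of order at most $9$ and the right parity, $\sum_{t\geqslant 1}R_n''(t)$ involves only $\zeta(i+2)$ for odd $i\leqslant 9$, the even indices die by the symmetry, and the would-be $\zeta(3)$-coefficient is proportional to the residue sum $\sum_k a_{1,k}$, which vanishes automatically because $\deg R_n\leqslant -2$---the same mechanism this paper uses to show $\rho_1=0$. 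Without such a device your $r_n$ will in general contain $\zeta(3)$, and you will have proved a different (weaker) statement.

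A second gap is your non-vanishing step (iii). The relevant series is not terminating and admits no classical closed-form evaluation (if it did, the whole approach would be moot), so ``evaluate it by a classical summation formula and read off that it is nonzero'' is not available. What works, and what Zudilin does, is to upgrade your step (ii) from an upper bound to an asymptotic equality: the saddle-point analysis yields $\lim_{n\to\infty}|r_n|^{1/n}=e^{-C_0}>0$ (possibly along a suitable subsequence), which already forces $r_n\neq 0$ for all large $n$; alternatively one can exhibit an integral representation with an integrand of constant sign. This part is repairable with tools you already plan to use, but as written it does not constitute a proof.
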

So far, the irrationality of $\zeta(5)$ seems beyond reach, but recently \cite{BZ2022+} there has been an attempt in this regards by Brown and Zudilin who constructed infinitely many effective rational approximations $m/n$ for $\zeta(5)$ satisfying
$$\displaystyle 0<\left|\zeta(5)-\frac{m}{n}\right|<\frac{1}{n^{0.86}}.$$
 These effective pairs $(m, n)$ come from the solution to a linear difference equation with polynomial coefficients. Their method involves a certain family of cellular integrals which are period integrals in the moduli space $\mathcal{M}_{0, 8}$ of curves of genus zero with eight marked points.
For recent asymptotic partial results on the irrationality or linear independence of odd zeta values, see \cite{ Fis2021+, FSZ2019, Lai2025-II+, LY2020}.

Let us turn our attention to the $p$-adic world. 
The $p$-adic zeta value $\zeta_p(s)$, for an integer $s \geqslant 2$, is characterized by 
\[
\zeta_p(s) = \lim_{\substack{k\to s \;\text{$p$-adically}\\k \in \mathbb{Z}_{<0}, \; k\equiv s \pmod{p-1}}} \zeta(k) \in \mathbb{Q}_p.
\]
In other words, the $p$-adic zeta value $\zeta_p(s)$ is a $p$-adic limit of special values of the Riemann zeta function at negative integers. 
It is known that $\zeta_p(s)=0$ for any positive even integer $s$, but much less is known for $\zeta_p(s)$ when $s \geqslant 3$ is odd.

In 2005, Calegari \cite{Cal2005} established a $p$-adic analogue of Ap{\'e}ry's theorem by proving that $\zeta_p(3)$ is irrational for $p=2,3$. Calegari made use of $p$-adic modular forms, which is reminiscent of Beukers' proof of Ap\'ery's theorem \cite{Beu1987}. 
See also Beukers \cite{Beu2008} for an alternative proof of the irrationality of $\zeta_2(3)$ and $\zeta_3(3)$ using Pad\'e approximation.

In 2020, the third author obtained the $p$-adic analogue of the Ball--Rivoal theorem. 
More precisely, the third author \cite{Spr2020} proved that the dimension of the $\mathbb{Q}$-linear span of $\zeta_p(3),\zeta_p(5),\ldots,\zeta_p(s)$ is at least $((1-\varepsilon)/(2\log2))\cdot\log s$ for any prime $p$, any $\varepsilon>0$, and any sufficiently large odd integer $s \geqslant s_0(p,\varepsilon)$. 
In \cite{Spr2020}, the third author made use of Volkenborn integrals to construct linear forms in $1$ and $p$-adic zeta values.
This method turns out to be a systematic tool for irrationality proofs concerning $p$-adic zeta values.
It inspired the first author \cite{Lai2025} to prove that at least one of $\zeta_2(7),\zeta_2(9),\zeta_2(11),\zeta_2(13)$ is irrational.
In addition, the first and third authors \cite{LS2023+} used Volkenborn integrals to prove certain asymptotic results on the number of irrationals among $p$-adic zeta values for any prime $p$.

Recently, Calegari, Dimitrov and Tang \cite{CDT2020+} proved the irrationality of $\zeta_2(5)$ using their arithmetic holonomicity criterion. 
This result has been independently proved by the first and third authors, along with Zudilin, in \cite{LSZ2025+} using an Ap\'ery-like approach. 
Note that for any prime $p \geqslant 5$, the irrationality of $\zeta_p(3)$ still remains an open question.

In this paper, we prove a $p$-adic analogue of Zudilin's theorem \cite{Zud2001} for primes $p\geqslant 5$. 
Let $\psi(x)$ be the digamma function and $\gamma$ be the Euler–Mascheroni constant. 
Our main result is as follows. 
	
\begin{theorem}\label{main_thm}
For any prime number $p \geqslant 5$, there exists an odd integer $i$ in the interval $[3,c_p]$ such that the $p$-adic zeta value $\zeta_p(i)$ is irrational, where
\[ 
c_p = p + \frac{p-1-\varpi_p}{\frac{p}{p-1}\log p - 1 -\log 2} 
\]
and 
\[ 
\varpi_p = \psi\left( \frac{1}{p} \right)  + 2p-1 + \gamma + p\left( \log p - \sum_{j=1}^{p} \frac{1}{j} \right). 
\]
\end{theorem}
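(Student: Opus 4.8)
The plan is to mimic, $p$-adically, the hypergeometric construction behind Zudilin's theorem \cite{Zud2001}. For each prime $p\geqslant5$ and each large $n$ I would produce an explicit rational linear form
\[
\ell_n \;=\; a_{0,n} + \sum_{\substack{3\leqslant i\leqslant K\\ i\ \mathrm{odd}}} a_{i,n}\,\zeta_p(i),\qquad a_{i,n}\in\mathbb{Q},
\]
where $K$ is the largest odd integer $\leqslant c_p$, show that $\ell_n$ is $p$-adically very small while the $a_{i,n}$ grow only moderately, and then invoke the product formula to conclude that not all of the $\zeta_p(i)$ can be rational. Concretely, following the Volkenborn-integral method of the third author \cite{Spr2020}, I would take $\ell_n=\int_{\mathbb{Z}_p}R_n(x)\,dx$ (together with the derivative/iterated variants needed to manufacture the higher zeta values) for a very-well-poised rational function $R_n$ patterned on Zudilin's construction: a quotient of shifted factorials in $x$ with an extra linear factor enforcing a symmetry $R_n(-x-h)=\pm R_n(x)$, and a free integer parameter controlling the pole order, which is what fixes $K$. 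The symmetry guarantees that the partial-fraction expansion of $R_n$ only contributes $\zeta_p(i)$ with $i$ odd, and it is also what keeps the archimedean coefficient asymptotics small.

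The first block of work is essentially formal: decompose $R_n$ into partial fractions, evaluate each elementary Volkenborn integral $\int_{\mathbb{Z}_p}(x+j)^{-i}\,dx$ in terms of $\zeta_p(i)$ plus rational correction terms, and so obtain the displayed expression with explicit $a_{i,n}$. The second block consists of the two estimates feeding the product-formula argument. Arithmetically, I would exhibit $D_n\in\mathbb{Z}_{>0}$ with $D_na_{i,n}\in\mathbb{Z}$ and $\tfrac1n\log D_n\to\delta$, the rate $\delta$ coming from $\mathrm{lcm}(1,\dots,\lfloor sn\rfloor)$-asymptotics (prime number theorem) tempered by the usual integrality gains of binomial-type coefficients; the intrinsic $p$-power denominator of the Volkenborn integral is only $p^{o(n)}$ and may be absorbed. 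Archimedeanly, a saddle-point (Laplace) analysis of the contour-integral representation of the underlying hypergeometric series gives $\limsup_n\tfrac1n\log\max_i|a_{i,n}|\leqslant\alpha$, with $\alpha$ expressed through digamma values at rational points; this is the origin of the $\psi(1/p)$ and $\sum_{j\leqslant p}1/j$ terms in $\varpi_p$.

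The decisive step is the $p$-adic lower bound $v_p(\ell_n)\geqslant\beta n+o(n)$ with an explicit $\beta>0$: each shifted-factorial factor in the numerator of $R_n$ contributes $p$-adic valuation of order $n/(p-1)$, and one must show this survives integration against the Volkenborn measure even though the poles of $R_n$ lie inside $\mathbb{Z}_p$. I expect to handle this by passing to the Mahler expansion of the integrand, or by partitioning $\mathbb{Z}_p$ according to $p$-adic proximity to the poles and tracking the resulting cancellations; since roughly $p$ factorials enter, $\beta$ should be of size $\approx\tfrac1{p-1}$ times a bounded factor, which is why $\tfrac{p}{p-1}\log p$ appears in the denominator of $c_p$. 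Pushing the same analysis to a more precise evaluation of $v_p(\ell_n)$ — or comparing with an archimedean lower bound coming from the Ap\'ery-like linear recurrence with polynomial coefficients that $\ell_n$ satisfies — yields the non-vanishing $\ell_n\neq0$ for infinitely many $n$.

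Finally, suppose for contradiction that $\zeta_p(i)\in\mathbb{Q}$ for every odd $i\in[3,c_p]$; clearing a common denominator turns $D_n\ell_n$, up to a factor $p^{o(n)}$, into a nonzero rational integer of archimedean size at most $e^{(\alpha+\delta)n+o(n)}$ whose $p$-adic absolute value is at most $p^{-\beta n+o(n)}$. The product formula then forces $e^{(\alpha+\delta)n}\geqslant p^{\beta n-o(n)}$, which is impossible once $\beta\log p>\alpha+\delta$. Solving this inequality for the largest admissible pole-order parameter — equivalently, for $K$ — is exactly what produces the threshold $c_p$, and optimizing the choice of $R_n$ so as to make $\beta\log p-\alpha-\delta$ as positive as possible is what makes $c_p$ as small as claimed. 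The main obstacle throughout is the $p$-adic valuation estimate for the Volkenborn-integrated very-well-poised series, together with the simultaneous optimization of the three rates $\alpha$, $\beta$, $\delta$.
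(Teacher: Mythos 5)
Your outline stays within the right family of methods (Volkenborn-integral linear forms in the style of Sprang--Lai, Zudilin-type arithmetic gains, and a smallness-plus-nonvanishing criterion), but it is missing the one idea the whole proof hinges on: an \emph{exact} evaluation of the $p$-adic valuation of the linear form, which is also what delivers $S_n\neq 0$. In the paper this is engineered into the construction itself: $R_n(t)=p^{pn}\,n!^s\,t^{M_0}\prod_{j=1}^{p-1}(t+\tfrac jp)_n/(t)_{n+1}^{p-1+s}$ carries the auxiliary factor $t^{M_0}$ with $M_0=p^{2+N_0}s-1$, and one restricts to the subsequence $n=n(N)$ with $(n+1)(p-1+s)-M_0=p^N(p-1)+1$; this forces exactly one Laurent coefficient of $\widetilde R_n(t/p)$ (the one with index $k=p^N(p-1)+1$, the unique $k$ in range with $k\equiv 1$ mod $p^N$ and mod $p-1$) to attain the minimal valuation, and summing over $j$ gives $v_p\bigl(\sum_j\widetilde R_n(j/p)\bigr)$ exactly (Lemma \ref{lem:vpRntilde}), hence $v_p(S_n)$ exactly and $S_n\neq0$ (Lemma \ref{lemma_p_adic_norm_S_n}). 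Your proposed substitutes do not close this gap: a Mahler-expansion or pole-proximity analysis can at best give a one-sided lower bound on $v_p(\ell_n)$, which says nothing about non-vanishing; and ``comparing with an archimedean lower bound coming from the Ap\'ery-like recurrence'' is not meaningful here, since $\ell_n$ is an element of $\mathbb{Q}_p$ and has no archimedean size to compare against unless you already assume the $\zeta_p(i)$ are rational. Non-vanishing of $p$-adic linear forms is exactly the hard point of this subject, and your sketch does not contain a mechanism for it.

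There are also structural inaccuracies in the construction that would have to be repaired. The elementary integrals $\int_{\mathbb{Z}_p}(x+j)^{-i}\,dx$ you propose are not defined: the pole $-j$ lies in $\mathbb{Z}_p$, so the integrand is not Volkenborn integrable. The paper avoids this by integrating the \emph{primitive} $\widetilde R_n(t+\tfrac jp)$ shifted by $j/p$ (so all poles $-k-\tfrac jp$ leave $\mathbb{Z}_p$ and the function is overconvergent), obtaining Hurwitz zeta values $\zeta_p(i,\tfrac jp)$, and then summing over $j=1,\dots,p-1$ via $p^i\zeta_p(i)=\sum_j\omega(\tfrac jp)^{1-i}\zeta_p(i,\tfrac jp)$; one also needs $\rho_1=0$ (from $\deg R_n\leqslant-2$) to kill the $\log_p$ contribution. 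Moreover, the well-poised symmetry you want to impose is unnecessary and beside the point: the restriction to odd $i$ comes for free from $\zeta_p(2k)=0$, not from any symmetry of $R_n$ (the paper's $R_n$ has none), so symmetry does not buy you the archimedean savings you expect. Finally, the digamma terms in $\varpi_p$ do not originate in a saddle-point analysis of the coefficients --- the coefficient growth is the elementary $2^{sn}p^{pn+o(n)}$ --- but in the Zudilin-style common divisor $\Phi_n=\prod_{\sqrt{pn}<q\leqslant n}q^{\phi(n/q)}$ extracted from all $\rho_i$, whose growth rate is $e^{\varpi_p n+o(n)}$ by the prime number theorem. Without the exact-valuation mechanism and the $\Phi_n$ bookkeeping, your three rates $\alpha,\beta,\delta$ cannot be pinned down to yield the stated value of $c_p$.
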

	
\bigskip
	
Table \ref{table} gives the numerical values of $c_p$ for some small primes $p \geqslant 5$.
\begin{table}[h]
\centering
\begin{tabular}{|c|c|c|}
\hline
$p$ & $c_p$ & the greatest odd integer $\leqslant c_p$ \\
\hline
$5$ &  $14.6698\ldots$   &  13   \\
\hline
$7$ &  $14.4779\ldots$  &  13   \\
\hline 
$11$ &  $18.0949\ldots$  &  17 \\
\hline  
$13$  &  $20.2567\ldots$  &  19 \\
\hline
$17$  &  $24.7553\ldots$ & 23 \\
\hline
$19$  &  $27.0426\ldots$ &  27\\
\hline
$\vdots$  &  $\vdots$  & $\vdots$ \\
\hline
$101$ &  $120.8043\ldots$ & 119 \\
\hline
\end{tabular}
\caption{numerical values of $c_p$}
\label{table}
\end{table}

For $p\geqslant 5$, we have the estimate (the greatest odd integer not exceeding $c_p$) $\leqslant p+p/\log p+5$ (see Remark \ref{rem:bound_cp}), which leads to the following.

\begin{corollary}
For any prime number $p \geqslant 5$, there exists an odd integer $i$ in the interval $[3,p+p/\log p+5]$ such that the $p$-adic zeta value $\zeta_p(i)$ is irrational. 
\end{corollary}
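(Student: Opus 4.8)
The plan is to deduce the corollary from Theorem~\ref{main_thm}: it suffices to establish the estimate stated just above it, namely that for every prime $p \geqslant 5$ the greatest odd integer not exceeding $c_p$ is at most $p + p/\log p + 5$. Indeed, the odd index $i \in [3,c_p]$ produced by Theorem~\ref{main_thm} is in particular an odd integer $\leqslant c_p$, hence is at most that greatest odd integer, so it lies in $[3, p + p/\log p + 5]$. Thus everything reduces to an elementary upper bound for $c_p$, and the two ingredients are an upper bound for the numerator $p - 1 - \varpi_p$ and a lower bound for the denominator $D_p := \tfrac{p}{p-1}\log p - 1 - \log 2$.

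For the numerator, I would first use $\psi(x+1) = \psi(x) + 1/x$ to write $\psi(1/p) = \psi(1 + 1/p) - p$, so that
\[
\psi\!\left(\tfrac1p\right) + 2p - 1 + \gamma \;=\; \left(\psi\!\left(1 + \tfrac1p\right) + \gamma\right) + p - 1 ,
\]
and then apply the Weierstrass expansion $\psi(1+x) + \gamma = \sum_{n\geqslant1}\!\left(\tfrac1n - \tfrac1{n+x}\right) = x\sum_{n\geqslant1}\tfrac1{n(n+x)}$ at $x = 1/p$. Since $0 < 1/p < 1$, comparing term by term with $\sum_{n\geqslant1}\tfrac1{n(n+1)} = 1$ and with $\sum_{n\geqslant1}\tfrac1{n^2} = \pi^2/6$ gives $1/p < \psi(1+1/p) + \gamma < \pi^2/(6p)$, of which only the lower bound is needed here. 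Combined with the classical estimate $0 < H_p - \log p - \gamma < 1/(2p)$ for the harmonic number $H_p = \sum_{j=1}^{p} 1/j$, which forces $p(\log p - H_p) > -p\gamma - \tfrac12$, this yields
\[
p - 1 - \varpi_p \;<\; p\gamma + \tfrac12 - \tfrac1p .
\]
For the denominator, the function $x \mapsto \tfrac{x\log x}{x-1}$ has derivative with the sign of $x - 1 - \log x > 0$, hence is increasing on $(1,\infty)$; so $D_p \geqslant D_5 = \tfrac54\log 5 - 1 - \log 2 > 0$, and moreover $D_p > \log p - 1 - \log 2$, which is positive once $p \geqslant 7$.

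It then remains to split on the size of $p$. For $p \geqslant 7$, positivity of the denominator gives $c_p - p < (p\gamma + \tfrac12)/(\log p - 1 - \log 2)$; and for $p \geqslant 13$ the inequality $(p\gamma + \tfrac12)/(\log p - 1 - \log 2) \leqslant p/\log p + 5$ amounts, after clearing the positive denominators, to
\[
\left(p\gamma + \tfrac12\right)\log p \;\leqslant\; (p + 5\log p)(\log p - 1 - \log 2) ,
\]
which one checks for all $p \geqslant 13$ --- it is clear for large $p$ since the coefficient of $p\log p$ is $\gamma < 1$ on the left and $1$ on the right, and the threshold $p=13$ is confirmed by substitution. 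Hence $c_p \leqslant p + p/\log p + 5$ for all $p \geqslant 13$. For the three remaining primes $p \in \{5,7,11\}$ one verifies the claim by hand from the explicit formula, now keeping the sharp numerator bound $p - 1 - \varpi_p < p\gamma + \tfrac12 - \tfrac1p$ and the exact value of $D_p$: in each case $c_p$ lies below $p + p/\log p + 5$ with room to spare, the sole tight spot being $p = 5$, where $c_5 = 14.67\ldots$ exceeds $5 + 5/\log 5 + 5 = 13.10\ldots$ but the greatest odd integer below $c_5$ is $13 \leqslant 13.10\ldots$ --- which is precisely where passing to the greatest odd integer is used.

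The one genuine obstacle is the non-uniformity in $p$: the tidy bound $D_p > \log p - 1 - \log 2$ discards the correction $\tfrac{\log p}{p-1}$, which is not negligible for small primes, so the asymptotic estimate cannot by itself reach $p \in \{5,7,11\}$, while conversely no bound of the form $D_p \geqslant \alpha\log p$ with a fixed $\alpha < 1$ survives the large-$p$ regime; the two ranges genuinely call for different estimates. The rest is bookkeeping --- one must carry the $\tfrac1p$ and $\tfrac1{2p}$ error terms accurately enough that the $p = 5$ case still closes after rounding down to an odd integer.
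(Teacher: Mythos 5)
Your proposal is correct and follows essentially the same route as the paper: the corollary is deduced from Theorem \ref{main_thm} together with the elementary bound of Remark \ref{rem:bound_cp} that the greatest odd integer not exceeding $c_p$ is at most $p+p/\log p+5$, with the rounding to an odd integer doing the work in the tight case $p=5$. The only difference is that you re-derive a slightly sharper estimate $p-1-\varpi_p<p\gamma+\tfrac12-\tfrac1p$ from the digamma and harmonic-number expansions instead of quoting Lemma \ref{lem:estimate_varpi_p} (whose cruder bound alone would not close $p=5$ without numerics), and your ``one checks for all $p\geqslant 13$'' step, like the paper's ``not difficult to check,'' still requires a routine verification across the intermediate primes rather than just the asymptotic comparison plus the value at $p=13$.
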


The structure of this paper is as follows. 
In \S \ref{sec:Prelim}, we introduce a basic irrationality criterion for $p$-adic numbers and recall the definition of $p$-adic zeta values.
In \S \ref{sec:Rational}, we construct a sequence of rational functions $R_n(t)$.
In \S \ref{sec:LinearForms}, we construct a sequence of linear forms $S_n$ in $1$ and $p$-adic zeta values.
In \S \ref{sec:Arithmetic}, we prove the arithmetic properties of the coefficients appearing in the linear forms.
In \S \ref{sec:p-adic_norm}, we estimate the $p$-adic norm of $S_n$.
In \S \ref{sec:Archimedean}, we prove certain Archimedean estimates.
Finally, we prove Theorem \ref{main_thm} in \S \ref{sec:proof}.

\section{Preliminaries}
\label{sec:Prelim}

\subsection{An irrationality criterion}
	
We first state an elementary irrationality criterion in the following, which is sufficient to prove Theorem \ref{main_thm}.

\begin{lemma}[{\cite[Lemma 2.1]{Lai2025}}]
\label{lem:irrationalityCrit}
Let $\xi_0,\xi_1,\ldots,\xi_s\in \mathbb{Q}_p$ and
\[
L_n(X_0,X_1,\ldots,X_s):=l_{0,n}X_0+l_{1,n}X_1+\cdots + l_{s,n}X_s\in \mathbb{Z}[X_0,X_1,\dots, X_s] 
\]
be a sequence of linear forms. 
Assume that there is an unbounded subset $I\subseteq\mathbb{N}$ such that:
\begin{enumerate}
\item[\textup{(1)}]
\label{lem:irrationalityCrit:i} $ \max_{0\leqslant i\leqslant s}|l_{i,n}|\cdot |L_n(\xi_0,\xi_1,\ldots,\xi_s)|_p\rightarrow 0$ as $n \in I$ and $n\to \infty$,
\item[\textup{(2)}]
\label{lem:irrationalityCrit:ii} $L_n(\xi_0,\xi_1,\ldots,\xi_s) \neq 0$ for all $n\in I$.
\end{enumerate}
Then, at least one of $\xi_0,\xi_1,\ldots,\xi_s$ is irrational. 
\end{lemma}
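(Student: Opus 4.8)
The plan is to argue by contraposition: assume all of $\xi_0, \xi_1, \ldots, \xi_s$ are rational and derive that hypotheses (1) and (2) cannot hold simultaneously along an unbounded set $I$. Write $\xi_i = a_i/b$ with $a_i \in \mathbb{Z}$ and a common denominator $b \in \mathbb{Z}_{\geqslant 1}$. Then for every $n$ we have
\[
b \cdot L_n(\xi_0, \ldots, \xi_s) = \sum_{i=0}^s l_{i,n} a_i \in \mathbb{Z}.
\]
Set $N_n := \sum_{i=0}^s l_{i,n} a_i \in \mathbb{Z}$, so that $L_n(\xi_0,\ldots,\xi_s) = N_n/b$.

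For the main estimate, first bound the ordinary absolute value: $|N_n| \leqslant \sum_{i=0}^s |l_{i,n}|\,|a_i| \leqslant \left(\sum_{i=0}^s |a_i|\right) \max_{0 \leqslant i \leqslant s} |l_{i,n}|$. Since the $a_i$ are fixed, $|N_n| \ll \max_i |l_{i,n}|$ with an implied constant independent of $n$. Now combine this with the $p$-adic smallness: by hypothesis (1),
\[
\max_{0 \leqslant i \leqslant s} |l_{i,n}| \cdot |L_n(\xi_0,\ldots,\xi_s)|_p
= \frac{1}{b} \max_{0 \leqslant i \leqslant s} |l_{i,n}| \cdot |N_n|_p \longrightarrow 0
\]
as $n \to \infty$ in $I$. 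Multiplying through by $b$ and using $|N_n| \ll \max_i|l_{i,n}|$ gives $|N_n|_p \cdot |N_n| \to 0$ along $I$. But for any nonzero integer $m$ one has the elementary inequality $|m|_p \cdot |m| \geqslant 1$ (indeed $|m|_p \geqslant 1/|m|$ since the $p$-part of $m$ is at most $|m|$). Hence $|N_n|_p |N_n| \to 0$ forces $N_n = 0$ for all sufficiently large $n \in I$, i.e. $L_n(\xi_0,\ldots,\xi_s) = 0$ for all large $n \in I$. Since $I$ is unbounded this contradicts hypothesis (2). Therefore at least one $\xi_i$ is irrational.

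There is essentially no hard step here; the only point requiring a little care is the passage from "$\max_i|l_{i,n}| \cdot |L_n|_p \to 0$" to "$N_n = 0$ eventually", which hinges on the product formula–type bound $|m| \cdot |m|_p \geqslant 1$ for $m \in \mathbb{Z} \setminus \{0\}$ and on the fact that clearing the common denominator $b$ only changes things by the bounded factors $b$ and $\sum_i |a_i|$. One should also note the degenerate case $s$ arbitrary but some $l_{i,n}$ possibly zero causes no problem, and that if $L_n(\xi_0,\ldots,\xi_s) = 0$ already for some $n \in I$ we are in the situation excluded by (2). This lemma is quoted from \cite[Lemma 2.1]{Lai2025}, so in the paper it suffices to cite it; the sketch above indicates why it is true.
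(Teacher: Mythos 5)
Your argument is correct, and it is the standard proof of this criterion; the paper itself does not reprove the lemma but simply cites \cite[Lemma 2.1]{Lai2025}, whose proof is exactly this contrapositive argument based on the inequality $|m|\cdot|m|_p\geqslant 1$ for nonzero integers $m$. One small slip: after clearing denominators, $|L_n(\xi_0,\ldots,\xi_s)|_p=|N_n|_p/|b|_p$, not $|N_n|_p/b$ (the $p$-adic, not archimedean, absolute value of $b$ appears); since $b$ is a fixed nonzero integer this constant is harmless and the limit argument is unaffected, but the displayed identity should be corrected.
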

		
\subsection{Volkenborn integrals and the Bernoulli functional}
In the following, we recall basic facts about Volkenborn integrals. 
For more details, we refer the reader to \cite[Chapter 5]{Rob2000} or \cite[\S 55]{Sch2006}. 
The \emph{Volkenborn integral} of a continuous function $f\colon \mathbb{Z}_p \to \mathbb{Q}_p$ is defined as
\[
\int_{\mathbb{Z}_p}f(t) \mathrm{d}t:= \lim_{n \to \infty} \frac{1}{p^n}\sum_{0\leqslant k<p^n} f(k),
\]
if this limit exists. 
In this case, $f$ is called \emph{Volkenborn integrable}. 
For example, every strictly differentiable function is Volkenborn integrable. 
The Volkenborn integral has the following behavior under translations:
\begin{lemma}[{\cite[\S 5.3, Prop. 2]{Rob2000}}]
\label{lem:Volkenborn_translation}
Let $m$ be a positive integer and $f\colon\mathbb{Z}_p\to \mathbb{Q}_p$ a strictly differentiable function, then
\[
\int_{\mathbb{Z}_p} f(t+m)\mathrm{d}t= \int_{\mathbb{Z}_p} f(t)\mathrm{d}t+\sum_{\nu=0}^{m-1}f'(\nu).
\]
\end{lemma}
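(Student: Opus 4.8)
The plan is to reduce at once to the case $m=1$ and to treat that case by a telescoping identity for the finite sums that define the Volkenborn integral. For $m=1$, fix $n \geqslant 1$ and set $A_n := p^{-n}\sum_{0 \leqslant k < p^n} f(k+1)$ and $B_n := p^{-n}\sum_{0 \leqslant k < p^n} f(k)$. Reindexing the first sum by $j = k+1$ shows that $p^n A_n = \sum_{1 \leqslant j \leqslant p^n} f(j)$ and $p^n B_n = \sum_{0 \leqslant j \leqslant p^n - 1} f(j)$ share all their terms except the two at the ends, so
\[
A_n - B_n = \frac{1}{p^n}\bigl(f(p^n) - f(0)\bigr) = \frac{f(p^n) - f(0)}{p^n - 0}.
\]
Since $p^n \to 0$ in $\mathbb{Z}_p$ with $p^n \neq 0$, differentiability of $f$ at $0$ yields $A_n - B_n \to f'(0)$. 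As $f$ is strictly differentiable it is Volkenborn integrable, so $B_n \to \int_{\mathbb{Z}_p} f(t)\,\mathrm{d}t$; hence $A_n = B_n + (A_n - B_n)$ converges, which says that $t \mapsto f(t+1)$ is Volkenborn integrable and $\int_{\mathbb{Z}_p} f(t+1)\,\mathrm{d}t = \int_{\mathbb{Z}_p} f(t)\,\mathrm{d}t + f'(0)$.

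For general $m$ I would induct on $m$, the base case $m=1$ being settled. Granting the formula for $m-1$, apply the case $m=1$ to the shifted function $h(t) := f(t+m-1)$; this $h$ is continuous, is differentiable at $0$ because $f$ is differentiable at $m-1$, and is Volkenborn integrable by the inductive hypothesis, so
\[
\int_{\mathbb{Z}_p} f(t+m)\,\mathrm{d}t = \int_{\mathbb{Z}_p} h(t+1)\,\mathrm{d}t = \int_{\mathbb{Z}_p} h(t)\,\mathrm{d}t + h'(0) = \int_{\mathbb{Z}_p} f(t+m-1)\,\mathrm{d}t + f'(m-1).
\]
Substituting the inductive hypothesis $\int_{\mathbb{Z}_p} f(t+m-1)\,\mathrm{d}t = \int_{\mathbb{Z}_p} f(t)\,\mathrm{d}t + \sum_{\nu=0}^{m-2} f'(\nu)$ completes the induction.

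I do not anticipate a genuine obstacle here: the whole point is the telescoping cancellation together with the fact that the ``mesh'' $p^n$ of the Riemann-type sums tends $p$-adically to $0$, so the surviving boundary term is exactly a difference quotient converging to $f'(0)$. The only thing to be careful about is the accounting of hypotheses\,---\,strict differentiability, rather than mere differentiability, is what guarantees that the Volkenborn integrals exist at all, and one must check that the shifted function $h$ still has the properties needed to rerun the $m=1$ argument\,---\,but this is entirely routine.
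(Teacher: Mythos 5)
Your proof is correct: the telescoping identity $A_n-B_n=p^{-n}\bigl(f(p^n)-f(0)\bigr)\to f'(0)$ for the case $m=1$, followed by induction on $m$ applied to the shift $h(t)=f(t+m-1)$ (whose integrability is exactly the inductive hypothesis), is a complete argument, and you account correctly for where strict differentiability versus mere differentiability is needed. The paper does not prove this lemma itself but cites \cite[\S 5.3, Prop.~2]{Rob2000}, and your argument is essentially the standard proof given there, so there is nothing further to compare.
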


Let us recall that the $n$-th Bernoulli polynomial $\mathbb{B}_n(X)$ for a non-negative integer $n$ is given by the following generating function:
\[
\frac{te^{X\cdot t}}{e^t-1}=\sum_{n=0}^\infty \mathbb{B}_n(X)\frac{t^n}{n!}.
\]
The constant term of $\mathbb{B}_n(X)$ gives the $n$-th Bernoulli number $B_n=\mathbb{B}_n(0)$.

\begin{lemma}[{\cite[Ch. 5, \S 5.4]{Rob2000}}]
\label{lem:Volkenborn_Bernoulli}
Let $n$ be a non-negative integer and $x\in \mathbb{Q}_p$. 
Then
\[
\int_{\mathbb{Z}_p} (t+x)^n \mathrm{d}t= \mathbb{B}_n(x),
\]
In particular, for $x=0$ the above integral gives the $n$-th Bernoulli number $B_n$.
\end{lemma}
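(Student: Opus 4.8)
The plan is to evaluate the defining limit of the Volkenborn integral directly, using the classical closed form for sums of powers. First I would record the telescoping identity
\[
\sum_{k=0}^{m-1}(k+x)^n=\frac{\mathbb{B}_{n+1}(x+m)-\mathbb{B}_{n+1}(x)}{n+1},
\]
valid for every positive integer $m$, every integer $n\geqslant 0$ and every $x\in\mathbb{Q}_p$. This is immediate from the difference equation $\mathbb{B}_{n+1}(y+1)-\mathbb{B}_{n+1}(y)=(n+1)y^n$ for Bernoulli polynomials, which in turn follows by comparing coefficients of $t^{n+1}/(n+1)!$ in the generating-function identity $\tfrac{te^{(y+1)t}}{e^t-1}-\tfrac{te^{yt}}{e^t-1}=te^{yt}$; one then applies this difference equation with $y=x,x+1,\dots,x+m-1$ and sums.

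Next I would take $m=p^N$ and divide by $p^N$, so that the $N$-th Volkenborn partial average becomes
\[
\frac{1}{p^N}\sum_{0\leqslant k<p^N}(k+x)^n=\frac{\mathbb{B}_{n+1}(x+p^N)-\mathbb{B}_{n+1}(x)}{(n+1)\,p^N}.
\]
Expanding the numerator by the addition formula $\mathbb{B}_{n+1}(x+y)=\sum_{j=0}^{n+1}\binom{n+1}{j}\mathbb{B}_j(x)\,y^{\,n+1-j}$ with $y=p^N$, the $j=n+1$ term cancels the subtracted $\mathbb{B}_{n+1}(x)$, the $j=n$ term equals $(n+1)\mathbb{B}_n(x)\,p^N$ and hence contributes exactly $\mathbb{B}_n(x)$ after dividing by $(n+1)p^N$, and for each $0\leqslant j\leqslant n-1$ the remaining contribution has the shape $c_j(x)\,(p^N)^{\,n-j}$ with $c_j(x)=\tfrac{1}{n+1}\binom{n+1}{j}\mathbb{B}_j(x)\in\mathbb{Q}_p$ a fixed number not depending on $N$. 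Thus
\[
\frac{1}{p^N}\sum_{0\leqslant k<p^N}(k+x)^n=\mathbb{B}_n(x)+\sum_{j=0}^{n-1}c_j(x)\,(p^N)^{\,n-j}.
\]

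Finally I would let $N\to\infty$. Since $n$ is fixed, the sum on the right has finitely many terms, each being a fixed element of $\mathbb{Q}_p$ times $(p^N)^{\,n-j}$ with $n-j\geqslant 1$; as $|(p^N)^{\,n-j}|_p=p^{-N(n-j)}\to 0$, every such term tends to $0$ $p$-adically. Hence the limit exists and equals $\mathbb{B}_n(x)$, which is the claimed formula; in particular this also shows directly that $t\mapsto(t+x)^n$ is Volkenborn integrable, without appealing to strict differentiability. Setting $x=0$ yields $\int_{\mathbb{Z}_p}t^n\,\mathrm{d}t=\mathbb{B}_n(0)=B_n$. There is really no serious obstacle here: the only point calling for a word of care is the bookkeeping of the lower-order terms together with the remark that each coefficient $c_j(x)$ lies in $\mathbb{Q}_p$ and is independent of $N$, so that it is genuinely absorbed by the vanishing factor $(p^N)^{\,n-j}$ — everything else is formal manipulation of Bernoulli polynomials.
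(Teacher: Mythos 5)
Your proof is correct and complete: the telescoping identity, the expansion of $\mathbb{B}_{n+1}(x+p^N)$ by the addition formula, and the observation that the leftover terms carry a factor $(p^N)^{\,n-j}$ with $n-j\geqslant 1$ and $N$-independent coefficients together give exactly the defining limit of the Volkenborn integral. The paper itself gives no proof but cites Robert's book, where the standard argument is essentially this same computation, so your approach matches the intended one.
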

	
Let us briefly recall the definition of the $n$-th Bernoulli functional on the space of overconvergent functions, compare \cite{LS2023+}. 
The $\mathbb{Q}_p$-algebra of \emph{overconvergent functions} on $\mathbb{Z}_p$ is given by
\[
C^{\dagger}(\mathbb{Z}_p,\mathbb{Q}_p):=\varinjlim_{\rho>1} C^{\text{an}}_\rho(\mathbb{Z}_p,\mathbb{Q}_p),
\]
where $C^{\text{an}}_\rho(\mathbb{Z}_p,\mathbb{Q}_p)$ denotes the $\mathbb{Q}_p$-Banach algebra 
\[
C^{\text{an}}_\rho(\mathbb{Z}_p,\mathbb{Q}_p):=\left\{ f\in C(\mathbb{Z}_p,\mathbb{Q}_p) \mid f(x)=\sum_{k=0}^\infty a_kx^k \text{ with } |a_k|_p\rho^k\to 0 \text{ as }k\to \infty \right\}
\]
of $\mathbb{Q}_p$-analytic functions of radius of convergence at least $\rho$ with its usual maximum norm. 

For $n\geqslant 1$, the \emph{$n$-th Bernoulli functional} $\mathcal{L}_n\colon C^{\dagger}(\mathbb{Z}_p,\mathbb{Q}_p)\to \mathbb{Q}_p$ is defined by
\[
\mathcal{L}_n\colon C^{\dagger}(\mathbb{Z}_p,\mathbb{Q}_p)\to \mathbb{Q}_p,\quad f=\sum_{k=0}^\infty a_kt^k\mapsto \mathcal{L}_n(f) :=\sum_{k=0}^\infty n\cdot a_k \frac{B_{k+n}}{k+n}.
\]
Note that this functional is well-defined by the overconvergence of $f$ together with the von Staudt-Clausen congruences on Bernoulli numbers.
	
The following lemma shows that the first Bernoulli functional of the derivative of an overconvergent function can be computed by a Volkenborn integral:
\begin{lemma}[{\cite[Lemma 2.7]{LS2023+}}]
\label{lem:comparison_Volkenborn_Bernoulli}
For any $f\in C^{\dagger}(\mathbb{Z}_p,\mathbb{Q}_p)$ we have the formula
    \begin{equation}\label{eq:comparison_Volkenborn_Bernoulli}
    \mathcal{L}_1(f')=\int_{\mathbb{Z}_p} f(t) \mathrm{d}t - f(0).
    \end{equation}
\end{lemma}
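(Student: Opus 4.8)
The plan is to prove the identity by evaluating both sides term by term, reducing everything to the single fact $\int_{\mathbb Z_p} t^k\,\mathrm{d}t=B_k$ (the $x=0$ case of Lemma~\ref{lem:Volkenborn_Bernoulli}). Write $f=\sum_{k\geq 0}a_kt^k$ with $|a_k|_p\rho^k\to 0$ for some real $\rho>1$ and put $M:=\sup_k|a_k|_p\rho^k<\infty$. Since $f'=\sum_{k\geq 1}ka_k\,t^{k-1}$, unwinding the definition of $\mathcal L_1$ gives at once
\[
\mathcal L_1(f')=\sum_{k\geq 1}(ka_k)\,\frac{B_k}{k}=\sum_{k\geq 1}a_kB_k,
\]
a series that converges $p$-adically because the von Staudt--Clausen theorem yields $|B_k|_p\leq p$, whence $|a_kB_k|_p\leq pM\rho^{-k}\to 0$. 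As $B_0=1$ and $f(0)=a_0$, it thus suffices to prove
\[
\int_{\mathbb Z_p}f(t)\,\mathrm{d}t=\sum_{k\geq 0}a_kB_k,
\]
for then subtracting $a_0=a_0B_0$ yields \eqref{eq:comparison_Volkenborn_Bernoulli}.

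For this last identity I would return to the definition $\int_{\mathbb Z_p}f(t)\,\mathrm{d}t=\lim_{n\to\infty}p^{-n}\sum_{0\leq j<p^n}f(j)$. A finite truncation raises no convergence issue, so the finite sum over $j$ may be interchanged with the convergent power series, giving $p^{-n}\sum_{j<p^n}f(j)=\sum_{k\geq 0}a_k\bigl(p^{-n}\sum_{j<p^n}j^k\bigr)$. Summing the telescoping identity $\mathbb B_{k+1}(x+1)-\mathbb B_{k+1}(x)=(k+1)x^k$ over $x=0,\dots,p^n-1$ and expanding $\mathbb B_{k+1}(p^n)=\sum_{i=0}^{k+1}\binom{k+1}{i}B_i(p^n)^{k+1-i}$, the contributions of $i=k+1$ and $i=k$ separate off to give
\[
\frac{1}{p^n}\sum_{j=0}^{p^n-1}j^k=B_k+\frac{1}{k+1}\sum_{i=0}^{k-1}\binom{k+1}{i}B_i\,(p^n)^{k-i}.
\]
Therefore $p^{-n}\sum_{j<p^n}f(j)=\sum_{k\geq 0}a_kB_k+E_n$, where $E_n$ gathers the error contributions over all $k\geq 1$, $0\leq i\leq k-1$, and it remains to show $E_n\to 0$.

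This last step is the crux, and it is precisely here that overconvergence is used. For $n\geq 1$ every term of $E_n$ satisfies $|a_k|_p\leq M\rho^{-k}$, $|B_i|_p\leq p$, $|k+1|_p^{-1}\leq k+1$, and $|(p^n)^{k-i}|_p\leq p^{-n}$ since $k-i\geq 1$, so by the ultrametric inequality
\[
|E_n|_p\ \leq\ pM\,C_\rho\,p^{-n},\qquad C_\rho:=\sup_{k\geq 1}(k+1)\rho^{-k}<\infty,
\]
the finiteness of $C_\rho$ being exactly what $\rho>1$ provides; hence $E_n\to 0$ as $n\to\infty$. This establishes $\int_{\mathbb Z_p}f(t)\,\mathrm{d}t=\sum_{k\geq 0}a_kB_k$ and completes the proof. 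The only genuinely delicate point is this estimate: for a merely continuous, or even $C^1$, function the factor $k+1$ cannot be absorbed and term-by-term Volkenborn integration may fail, which is why the lemma is stated for the overconvergent class $C^{\dagger}(\mathbb Z_p,\mathbb Q_p)$.
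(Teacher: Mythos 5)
Your proof is correct: the reduction of $\mathcal{L}_1(f')=\sum_{k\geq 1}a_kB_k$ from the definition, the identification $\int_{\mathbb{Z}_p}t^k\,\mathrm{d}t=B_k$, and the quantitative justification of term-by-term Volkenborn integration (where the factor $k+1$ from $1/(k+1)$ and the von Staudt--Clausen bound $|B_i|_p\leq p$ are absorbed by $\rho^{-k}$ with $\rho>1$) are all sound, and you correctly identify overconvergence as the point where the argument genuinely needs it. The paper itself does not prove this lemma but cites \cite[Lemma 2.7]{LS2023+}, whose proof proceeds along essentially the same lines, so your argument is an acceptable self-contained substitute for the citation.
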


\subsection{$p$-adic Hurwitz zeta functions}
Let $p$ be an odd prime.
In this subsection, we recall some basic facts about $p$-adic Hurwitz zeta functions. 
Since $p$ is assumed to be an odd prime, the units $\mathbb{Z}_p^\times$ of the $p$-adic integers decompose canonically
\[
\mathbb{Z}_p^\times \xrightarrow{\sim} \mu_{p-1}(\mathbb{Z}_p)\times (1+p\mathbb{Z}_p).
\]
Here, $\mu_n(R)$ denotes the group of $n$-th roots of unity in a ring $R$. 
The canonical projection
\[
\omega \colon \mathbb{Z}_p^\times \rightarrow \mu_{p-1}(\mathbb{Z}_p)
\]
is called the \emph{Teichm\"uller character}, which can be extended multiplicatively to a map
\[
\mathbb{Q}_p^\times\rightarrow \mathbb{Q}_p^\times,
\]
by setting
\[
\omega(x):=p^{v_p(x)}\omega(x/p^{v_p(x)}).
\]
We define $\langle x\rangle:=x/\omega(x)$ for $x\in\mathbb{Q}_p^\times$. 
For $x\in\mathbb{Q}_p$ with $|x|_p>1$, there is a unique $p$-adic meromorphic function 
\[
\zeta_p(\cdot,x)\colon \{s\in\mathbb{C}_p\setminus\{1\} \mid |s|_p<p^{1-\frac{1}{p-1}}\} \to \mathbb{C}_p,
\]
called the \emph{$p$-adic Hurwitz zeta function}, such that 
\[
\zeta_p(1-n,x)=-\omega(x)^{-n}\frac{\mathbb{B}_n(x)}{n}, \quad (n\geqslant 2).
\]
Explicitly, the $p$-adic Hurwitz zeta function can be defined in terms of Volkenborn integrals as follows:
\begin{equation}
\label{eq:zeta_Volkenborn}
\zeta_p(s,x)=\frac{1}{s-1}\int_{\mathbb{Z}_p}\langle t+x\rangle^{1-s}\mathrm{d}t,
\end{equation}
see \cite[Def. 11.2.5]{Coh2007} for more details. 
The following lemma  relates the $p$-adic Hurwitz zeta function to $p$-adic zeta values:
\begin{lemma}[{\cite[Lemma 2.10]{LS2023+}}]
\label{lem:padicHurwitz_zeta}
Let $D$ and $i$ be positive integers with $p|D$ and $i\geqslant 2$, then
\[
D^i\cdot \zeta_p(i)=\sum_{\substack{1\leqslant j\leqslant D \\ \gcd(j,p)=1 }} \omega\left(\frac{j}{D}\right)^{1-i}\zeta_p\left(i,\frac{j}{D}\right).
\]
\end{lemma}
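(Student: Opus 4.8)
Write $D=p^{v}u$ with $v=v_{p}(D)\geqslant 1$ and $p\nmid u$. For every $j$ with $1\leqslant j\leqslant D$ and $p\nmid j$ one has $v_{p}(j/D)=-v<0$, so $|j/D|_{p}>1$ and $\zeta_{p}(s,j/D)$ is defined, and from the definition of the extended Teichm\"uller character $\omega(j/D)=p^{-v}\omega(j/u)$ with $\omega(j/u)\in\mu_{p-1}(\mathbb{Z}_{p})$. Dividing the asserted identity by $p^{v(i-1)}$ and using $D^{i}=p^{vi}u^{i}$ and $\omega(j/D)^{1-i}=p^{v(i-1)}\omega(j/u)^{1-i}$, it becomes equivalent to
\[
p^{v}\,u^{i}\,\zeta_{p}(i)\;=\;\sum_{\substack{1\leqslant j\leqslant D\\ p\nmid j}}\omega(j/u)^{1-i}\,\zeta_{p}\!\left(i,\tfrac{j}{D}\right).
\]
The plan is to prove this by analytic continuation in $i$ from the negative integers, where it will reduce to the classical multiplication formula for Bernoulli polynomials.

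To set up the continuation, fix the residue class of $i$ modulo $p-1$ and replace $i$ by a variable $s$ in the disc $|s|_{p}<p^{1-1/(p-1)}$, freezing the roots-of-unity factors $\omega(u)^{i}$ and $\omega(j/u)^{1-i}$ at their values at $s=i$ while keeping the genuinely analytic factors variable. Concretely, put
\[
F(s):=p^{v}\,\omega(u)^{i}\,\langle u\rangle^{s}\,\zeta_{p}(s),\qquad G(s):=\sum_{\substack{1\leqslant j\leqslant D\\ p\nmid j}}\omega(j/u)^{1-i}\,\zeta_{p}\!\left(s,\tfrac{j}{D}\right).
\]
Both are $p$-adic analytic on that disc away from the possible simple pole at $s=1$: for $\zeta_{p}(s)$ this is the analyticity of the Kubota--Leopoldt $p$-adic $L$-function, for $\zeta_{p}(s,j/D)$ it is the meromorphy recalled above, and $\langle u\rangle^{s}$ is analytic because $\langle u\rangle\in 1+p\mathbb{Z}_{p}$. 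Since $F(i)=p^{v}u^{i}\zeta_{p}(i)$ and $G(i)$ is the right-hand side above, it suffices to prove $F\equiv G$, and for that, by the identity principle for $p$-adic analytic functions, it suffices to check $F(1-n)=G(1-n)$ for the infinitely many integers $n\geqslant 2$ with $n\equiv 1-i\pmod{p-1}$ (take for instance $n=n_{0}+(p-1)p^{k}$, $k\to\infty$, so that $1-n$ accumulates at $1-n_{0}\neq 1$ inside the disc).

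At $s=1-n$ with $n\geqslant 2$ and $n\equiv 1-i\pmod{p-1}$ we use the interpolation property $\zeta_{p}(1-n)=-(1-p^{n-1})B_{n}/n$ of the $p$-adic zeta function and the recalled formula $\zeta_{p}(1-n,x)=-\omega(x)^{-n}\mathbb{B}_{n}(x)/n$. Because $n\equiv 1-i\pmod{p-1}$ we have $\omega(u)^{i}\langle u\rangle^{1-n}=u^{1-n}$ and $\omega(j/u)^{1-i}=\omega(j/u)^{n}$, while $\omega(j/D)^{-n}=p^{vn}\omega(j/u)^{-n}$; hence the Teichm\"uller factors collapse to powers of $p$ and one obtains
\[
F(1-n)=-\frac{p^{v}(1-p^{n-1})B_{n}}{n\,u^{n-1}},\qquad G(1-n)=-\frac{p^{vn}}{n}\sum_{\substack{1\leqslant j\leqslant D\\ p\nmid j}}\mathbb{B}_{n}\!\left(\tfrac{j}{D}\right).
\]
Equating these and clearing denominators, the whole lemma reduces to the classical identity
\[
\sum_{\substack{1\leqslant j\leqslant D\\ p\nmid j}}\mathbb{B}_{n}\!\left(\tfrac{j}{D}\right)=D^{1-n}(1-p^{n-1})B_{n}\qquad(n\geqslant 2).
\]
This follows from the multiplication formula $\sum_{j=0}^{m-1}\mathbb{B}_{n}(x+j/m)=m^{1-n}\mathbb{B}_{n}(mx)$: taken at $x=1/m$ it gives $\sum_{j=1}^{m}\mathbb{B}_{n}(j/m)=m^{1-n}\mathbb{B}_{n}(1)=m^{1-n}B_{n}$ for $n\geqslant 2$; applying it with $m=D$ for the full sum and with $m=D/p$ for the terms with $p\mid j$ (writing $j=pk$, so $j/D=k/(D/p)$) and subtracting yields the displayed identity.

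The only genuine subtlety is the bookkeeping with the extended character $\omega$ and the powers of $p$ --- extracting the non-analytic factor $p^{vs}$ and freezing the roots-of-unity parts so that $F$ and $G$ are honestly analytic in $s$ while still agreeing with the naive expressions at the negative integers in the chosen residue class --- together with placing the accumulation point of that sequence away from the pole $s=1$. Everything else (analyticity and interpolation of $\zeta_{p}$, meromorphy of $\zeta_{p}(\cdot,x)$, the Bernoulli multiplication formula) is standard.
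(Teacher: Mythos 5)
The paper itself offers no proof of this lemma; it simply cites \cite[Lemma 2.10]{LS2023+}, so there is no in-paper argument to compare against. Your proof is correct as it stands. The reduction via $\omega(j/D)=p^{-v}\omega(j/u)$ is right, the frozen Teichm\"uller factors make $F$ and $G$ honest analytic (away from $s=1$) functions, the evaluations $F(1-n)$ and $G(1-n)$ for $n\equiv 1-i\pmod{p-1}$ are computed correctly (the exponents $i-1+n$ and $1-i-n$ are divisible by $p-1$, so the root-of-unity factors indeed collapse), and the resulting identity $\sum_{p\nmid j}\mathbb{B}_n(j/D)=D^{1-n}(1-p^{n-1})B_n$ follows exactly as you say from the multiplication formula applied with $m=D$ and $m=D/p$. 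Two points are worth making explicit in a final write-up: (i) ``$\zeta_p(s)$'' in the definition of $F$ must be specified as the branch $L_p(s,\omega^{1-i})$ of the Kubota--Leopoldt function, and one should record why its value at $s=i$ equals the paper's $\zeta_p(i)$, which is defined as $\lim\zeta(k)$ over negative $k\equiv i\pmod{p-1}$; this is immediate from the interpolation formula since $p^{n-1}B_n/n\to 0$ $p$-adically along that sequence, but it is the bridge between your analytic function and the quantity in the statement. (ii) The identity-principle step is cleanest via Strassmann's theorem applied to $(s-1)\bigl(F(s)-G(s)\bigr)$, which removes the possible pole at $s=1$ occurring when $i\equiv 1\pmod{p-1}$; your alternative of placing the accumulation point $1-n_0\neq 1$ away from the pole also works, provided you note that $F-G$ is given by a convergent power series on a disc around that point. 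Neither remark is a gap, only bookkeeping to spell out.
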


\section{Rational functions}
\label{sec:Rational}

From now on, we fix any prime $p \geqslant 5$. Recall that the Pochhammer symbol $(\alpha)_k$ is defined by 
\[ 
(\alpha)_k := \alpha(\alpha+1)\cdots(\alpha+k-1) 
\]
for any positive integer $k$.
	
Fix a positive integer $s$. 
Let 
\begin{equation}
\label{defi_N_0_and_i_0}
N_0 = v_p(p-1+s) \quad\text{and}\quad M_0 = p^{2+N_0}s -1.
\end{equation}
Note that 
\begin{equation}
\label{upper_bound_for_i_0}
M_0 \leqslant p^2(p-1+s)s-1 < (p+s)^4.
\end{equation}

\begin{definition}
\label{def:R_n(t)}
We define for any positive integer $n > (p+s)^4$ the rational function $R_n(t) \in \mathbb{Q}(t)$ by
\[ 
R_n(t) := p^{pn} \cdot n!^s \cdot t^{M_0} \cdot \frac{\prod_{j=1}^{p-1} \left( t+\frac{j}{p} \right)_n}{(t)_{n+1}^{p-1+s}}.  
\]
\end{definition}
	
For a rational function $R(t) = P(t)/Q(t)$, where $P(t)$, $Q(t)$ are polynomials in $t$, we define the degree of $R(t)$ by $\deg R := \deg P - \deg Q$. 
By Definition \ref{def:R_n(t)} and Eq. \eqref{upper_bound_for_i_0}, we have
\[ 
\deg R_n = M_0 -(p-1) - s(n+1)  \leqslant -2  
\]
for every $n >(p+s)^4$.
	
Note that $M_0 \geqslant p^2s-1>p-1+s$, so $t=0$ is not a pole of $R_n(t)$. 
Recall that the $p$-adic logarithm $\log_p$ is defined on $\{ 1+x \in \mathbb{Q}_p ~\mid~ |x|_p < 1  \}$ by
\[ 
\log_p(1+x) := \sum_{j=1}^{\infty} (-1)^{j-1} \frac{x^{j}}{j}, \quad |x|_p < 1. 
\]
The function $f(t) = \log_p\langle t \rangle$ is defined on $\mathbb{Q}_p^{\times}$ and $f'(t) = 1/t$ for every $t \in \mathbb{Q}_p^{\times}$.
	
\begin{definition}
\label{def:tildaR_n(t)}
For any integer $n > (p+s)^4$, we denote the partial fraction decomposition of $R_n(t)$ by 
\begin{equation}
\label{def:r_ik}
R_n(t) =: \sum_{i=1}^{p-1+s}\sum_{k=1}^{n} \frac{r_{i,k}}{(t+k)^i},
\end{equation}
where the coefficients $r_{i,k} \in \mathbb{Q}$ are uniquely determined by $R_n(t)$. 
For any integer $n > (p+s)^4$, we define the function $\widetilde{R}_n(t)$ by
\begin{equation}
\label{definition_widetildeR_n}
\widetilde{R}_n(t) := \sum_{k=1}^{n} r_{1,k}\log_p\langle t + k\rangle + \sum_{i=2}^{p-1+s}\sum_{k=1}^{n} \frac{r_{i,k}}{(1-i)(t+k)^{i-1}}.
\end{equation}
\end{definition}
	
Note that $r_{i,k} = r_{i,k,n}$ also depends on $n$. 
For convenience, we omit the subscript $n$. 
In the sequel, we will also omit the subscript $n$ in some other notations when there is no danger. 
	
\begin{lemma}
\label{lemma_primitive}
The function $\widetilde{R}_n(t)$ is a primitive function of $R_n(t)$ on $\mathbb{Q}_p \setminus \{-1,-2,\ldots,-n\}$\textup{;} that is,
\[ 
\widetilde{R}_n^{\prime}(t) = R_n(t) \quad\text{for any~} t \in \mathbb{Q}_p \setminus \{-1,-2,\ldots,-n\}. 
\]
Moreover, for any $j=1,2,\ldots,p-1$, the function
\[ 
\mathbb{Z}_p\to \mathbb{Q}_p, \quad t\mapsto \widetilde{R}_n\left(t+\frac{j}{p}\right),  
\]
is overconvergent, i.e. $\widetilde{R}_n\left(t+\frac{j}{p}\right)\in C^\dagger(\mathbb{Z}_p,\mathbb{Q}_p)$.
\end{lemma}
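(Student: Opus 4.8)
The plan is to verify the two assertions separately, starting with the statement that $\widetilde{R}_n$ is a primitive of $R_n$ on $\mathbb{Q}_p \setminus \{-1,\ldots,-n\}$. Since $R_n(t)$ has the partial fraction decomposition \eqref{def:r_ik} with no polynomial part (because $\deg R_n \leqslant -2$ and $t=0$ is not a pole, so all poles lie among $t=-1,\ldots,-n$ with orders at most $p-1+s$), it suffices to differentiate \eqref{definition_widetildeR_n} term by term on the open set $\mathbb{Q}_p \setminus \{-1,\ldots,-n\}$. For the terms with $i \geqslant 2$ this is the elementary identity $\frac{\mathrm{d}}{\mathrm{d}t}\frac{1}{(1-i)(t+k)^{i-1}} = \frac{1}{(t+k)^i}$, valid wherever $t+k \neq 0$. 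For the terms with $i=1$ we use the fact, recalled just before Definition~\ref{def:tildaR_n(t)}, that $f(t)=\log_p\langle t\rangle$ satisfies $f'(t)=1/t$ on $\mathbb{Q}_p^\times$; hence $\frac{\mathrm{d}}{\mathrm{d}t}\log_p\langle t+k\rangle = \frac{1}{t+k}$ whenever $t+k \in \mathbb{Q}_p^\times$, i.e. for all $t \notin \{-1,\ldots,-n\}$. Summing gives $\widetilde{R}_n'(t) = R_n(t)$ there. One should remark that the decomposition \eqref{def:r_ik} indeed captures all of $R_n$: the absence of a polynomial part and of a pole at $t=0$ both follow from the degree bound and from $M_0 \geqslant p^2 s - 1 > p-1+s$ noted in the text.

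For the overconvergence statement, fix $j \in \{1,\ldots,p-1\}$ and consider $g_j(t) := \widetilde{R}_n(t + j/p)$ as a function of $t \in \mathbb{Z}_p$. For $t \in \mathbb{Z}_p$ and $1 \leqslant k \leqslant n$ we have $t + j/p + k = (pt + j + pk)/p$, whose numerator $pt+j+pk$ is a $p$-adic unit (since $j \in \{1,\ldots,p-1\}$ is coprime to $p$), so $t + j/p + k \in \mathbb{Q}_p^\times$ and in fact $|t+j/p+k|_p = p$; thus none of the shifted points $-k$ is hit, and each summand of $g_j$ is a genuine $p$-adic function on $\mathbb{Z}_p$. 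For the terms $\frac{r_{i,k}}{(1-i)(t+j/p+k)^{i-1}}$ with $i \geqslant 2$, writing $t+j/p+k = \tfrac{1}{p}(j+pk)(1 + \tfrac{pt}{j+pk})$ exhibits this as $\tfrac{1}{p^{1-i}}(j+pk)^{1-i}(1 + u)^{1-i}$ where $u = pt/(j+pk) \in p\mathbb{Z}_p$; expanding $(1+u)^{1-i}$ by the binomial series in $u$ gives a power series in $t$ with radius of convergence strictly larger than $1$, because the coefficients decay like $|u|_p^m \leqslant p^{-m}$ against the polynomially-bounded binomial coefficients. For the $i=1$ terms we use $\log_p\langle t+j/p+k\rangle$: since $\langle \cdot \rangle$ is locally analytic on $\mathbb{Q}_p^\times$ and, on the ball of radius $p$ around the unit $(j+pk)/p$, the quantity $\langle t+j/p+k\rangle$ equals a unit times $(1 + u)$ with $u \in p\mathbb{Z}_p$ as above, we get $\log_p\langle t+j/p+k\rangle = \log_p\langle (j+pk)/p\rangle + \log_p(1+u)$, and $\log_p(1+u) = \sum_{m\geqslant 1} (-1)^{m-1} u^m/m$ is overconvergent in $t$ (the factor $1/m$ only costs $v_p(m) \leqslant \log_p m$, easily beaten by $|u^m|_p \leqslant p^{-m}$). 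Finally, $C^\dagger(\mathbb{Z}_p,\mathbb{Q}_p)$ is a $\mathbb{Q}_p$-algebra, and a \emph{finite} sum of overconvergent functions is overconvergent, so $g_j \in C^\dagger(\mathbb{Z}_p,\mathbb{Q}_p)$.

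The first part is essentially bookkeeping once one notes the degree bound rules out a polynomial summand; the only subtlety worth stating explicitly is that $f'(t)=1/t$ for $\log_p\langle t\rangle$ is exactly what makes the $i=1$ terms integrate correctly, so the $\log_p\langle\cdot\rangle$ (rather than a naive $\log_p$) is forced on us. The genuine content, and the step I expect to require the most care, is the overconvergence claim: one must check that each series in $t$ obtained above has radius of convergence bounded below by some fixed $\rho > 1$ \emph{uniformly}, i.e. that the common $\rho$ does not degrade as $k$ ranges over $1,\ldots,n$. This is where the key input is $|j + pk|_p = 1$, giving $|u|_p = |pt|_p \leqslant p^{-1}$ independently of $k$, so the radius of convergence of every binomial/logarithmic expansion is at least $p$ (minus the harmless $1/m$ losses, which shrink the usable radius only to something like $p/(\text{subexponential})$, still $>1$), and the sum over $k$ is finite so no further issue arises. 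Packaging these uniform estimates is routine but must be done honestly to land in $C^{\mathrm{an}}_\rho$ for a fixed $\rho>1$, hence in $C^\dagger$.
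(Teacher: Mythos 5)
Your proposal is correct and follows essentially the same route as the paper: term-by-term differentiation using $\frac{\mathrm{d}}{\mathrm{d}t}\log_p\langle t\rangle = 1/t$ for the first claim, and for overconvergence the same decomposition $t+k+\frac{j}{p} = \bigl(k+\frac{j}{p}\bigr)\bigl(1+\bigl(k+\frac{j}{p}\bigr)^{-1}t\bigr)$ with $\bigl(k+\frac{j}{p}\bigr)^{-1}t \in p\mathbb{Z}_p$, so that each of the finitely many summands is visibly an overconvergent power series in $t$. Your explicit uniform radius estimates only make more detailed what the paper dismisses as clear, so there is no substantive difference.
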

	
\begin{proof}
For $t \in \mathbb{Q}_p \setminus \{-1,-2,\ldots,-n\}$, we have
\[
\widetilde{R}_n^{\prime}(t) = \sum_{k=1}^{n} \frac{r_{1,k}}{t+k} + \sum_{i=2}^{p-1+s}\sum_{k=1}^{n} \frac{r_{i,k}}{(t+k)^i} = R_n(t). 
\]
		
For any $j \in \{1,2,\ldots, p-1\}$, any $k \in \{1,2,\ldots,n\}$ and any $t \in \mathbb{Z}_p$, we have 
\[ 
1 + \left(k+\frac{j}{p}\right)^{-1}t \in 1+p\mathbb{Z}_p.
\]
Therefore, we have
\[ 
\left\langle t+k+\frac{j}{p} \right\rangle = \left\langle 1 + \left(k+\frac{j}{p}\right)^{-1}t\right\rangle\left\langle k+\frac{j}{p}\right\rangle = \left(1 + \left(k+\frac{j}{p}\right)^{-1}t\right)\left\langle k+\frac{j}{p} \right\rangle. 
\] 
Thus,
\begin{align}
\widetilde{R}_n\left(t+\frac{j}{p}\right) = &\sum_{k=1}^{n} r_{1,k}\log_p\left\langle k+\frac{j}{p} \right\rangle + \sum_{k=1}^{n} r_{1,k}\log_p\left(1+\left(k+\frac{j}{p}\right)^{-1}t\right) \notag\\
&+ \sum_{i=2}^{p-1+s}\sum_{k=1}^{n} \frac{r_{i,k}}{(1-i)\left(k+\frac{j}{p}\right)^{i-1}}\left(1+\left(k+\frac{j}{p}\right)^{-1}t\right)^{1-i}, \quad t \in \mathbb{Z}_p .\label{Rn(t+theta)}
\end{align} 
Clearly, each summand on the right-hand side of \eqref{Rn(t+theta)} belongs to $C^\dagger(\mathbb{Z}_p,\mathbb{Q}_p)$. 
Therefore, $\widetilde{R}_n(t+\theta)$ is overconvergent. 
The proof of Lemma \ref{lemma_primitive} is complete.
\end{proof}

\section{Linear forms}
\label{sec:LinearForms}
	
In this section, we define for each $j=1,2,\ldots,p-1$ a linear form in $1$ and $p$-adic Hurwitz zeta values as a Volkenborn integral over $-\widetilde{R}_n\left(t+\frac{j}{p}\right)$.
Taking the sum of these linear forms, we obtain linear forms in $1$ and $p$-adic zeta values. 

\begin{lemma}
\label{rho_1_is_zero}
Define $\rho_1 := \sum_{k=1}^{n} r_{1,k}$. 
Then we have $\rho_1 = 0$.
\end{lemma}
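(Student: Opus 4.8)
The plan is to recognize $\rho_1$ as the sum of all residues of the rational function $R_n(t)$ and to read off that this sum vanishes from the degree bound $\deg R_n\leqslant -2$. Since $M_0\geqslant p^2 s-1>p-1+s$, the point $t=0$ is not a pole of $R_n$, so the full pole set of $R_n$ is exactly $\{-1,-2,\ldots,-n\}$, and the partial fraction decomposition \eqref{def:r_ik} lists every pole. In particular $r_{1,k}$ is the residue of $R_n$ at $t=-k$, and hence $\rho_1=\sum_{k=1}^{n}r_{1,k}$ is the sum of \emph{all} residues of $R_n(t)$.

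First I would expand each term of \eqref{def:r_ik} as a Laurent series at $t=\infty$. For $i\geqslant 2$ one has $\tfrac{1}{(t+k)^i}=t^{-i}(1+k/t)^{-i}=O(t^{-2})$, so these terms contribute nothing to the coefficient of $t^{-1}$; for $i=1$ one has $\tfrac{1}{t+k}=t^{-1}-kt^{-2}+\cdots$, contributing $t^{-1}$ with coefficient $1$. Summing over $i$ and $k$, the coefficient of $t^{-1}$ in the expansion of $R_n(t)$ at $t=\infty$ is precisely $\sum_{k=1}^{n}r_{1,k}=\rho_1$. (Equivalently: multiply \eqref{def:r_ik} by $t$ and let $t\to\infty$; the left side tends to $0$ because $\deg R_n\leqslant -2$, while the right side tends to $\sum_k r_{1,k}$.)

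On the other hand, the computation $\deg R_n=M_0-(p-1)-s(n+1)\leqslant -2$ recorded after Definition \ref{def:tildaR_n(t)} (using \eqref{upper_bound_for_i_0}) says exactly that $R_n(t)=O(t^{-2})$ as $t\to\infty$, i.e. the Laurent expansion of $R_n$ at infinity has no $t^{-1}$ term. Comparing with the previous paragraph forces $\rho_1=0$.

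There is essentially no obstacle here: the statement is a one-line consequence of the degree estimate. The only point requiring a moment's care is that $t=0$ must genuinely fail to be a pole, so that \eqref{def:r_ik} captures every pole of $R_n$ and $\rho_1$ really is the complete sum of residues; this is exactly what $M_0\geqslant p^2 s-1>p-1+s$ guarantees, and it has already been noted in the text.
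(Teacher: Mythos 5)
Your proof is correct and is essentially the paper's argument: the paper simply multiplies \eqref{def:r_ik} by $t$, lets $t\to\infty$, and uses $\deg R_n\leqslant -2$ to get $\rho_1=\lim_{t\to\infty}tR_n(t)=0$, which is exactly the parenthetical version of your residue computation.
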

	
\begin{proof}
By \eqref{def:r_ik} and $\deg R_n \leqslant -2$, we have 
\[ 
\rho_1 = \lim_{t \to \infty} tR_n(t) = 0. 
\]
\end{proof}
	
For any $j \in \{1,2,\ldots,p-1\}$, by \eqref{definition_widetildeR_n} we have
\[ 
\widetilde{R}_n\left(t+\frac{j}{p}\right) = \sum_{k=1}^{n} r_{1,k}\log_p\left\langle t+k+\frac{j}{p} \right\rangle + \sum_{i=2}^{p-1+s}\sum_{k=1}^{n} \frac{r_{i,k}}{(1-i)\left(t+k+\frac{j}{p}\right)^{i-1}}. 
\]
Note that each summand on the right-hand side above is Volkenborn integrable.
	
\begin{definition}\label{def:S_theta}
For any $j \in \{1,2,\ldots,p-1\}$, we define
\[ 
S_{j/p} := -\int_{\mathbb{Z}_p} \widetilde{R}_n\left(t+\frac{j}{p} \right) \mathrm{d}t. 
\]
\end{definition}
	
It turns out that $S_{j/p}$ is a linear form in $1$ and $p$-adic Hurwitz zeta values.
	
\begin{lemma}
\label{lemma_linear_form_S_theta}
For any $j \in \{1,2,\ldots,p-1\}$, we have
\[ 
S_{j/p} = \rho_{0,j/p} + \sum_{i=2}^{p-1+s} \rho_i \cdot \omega\left(\frac{j}{p}\right)^{1-i}\zeta_p\left(i,\frac{j}{p}\right),  
\]
where the coefficients
\begin{equation}
\label{definition_rho_i}
\rho_{i}:=\sum_{k=1}^{n} r_{i,k}, \quad (2 \leqslant i \leqslant p-1+s)
\end{equation}
do not depend on $j \in \{1,2,\ldots,p-1\}$, and 
\begin{equation}
\label{definition_rho_0theta}
\rho_{0,j/p}:=-\sum_{i=1}^{p-1+s}\sum_{k=1}^n\sum_{\nu=0}^{k-1} \frac{r_{i,k}}{\left(\nu+\frac{j}{p}\right)^{i}}.
\end{equation}
\end{lemma}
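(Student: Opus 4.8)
The plan is to insert the definition \eqref{definition_widetildeR_n} of $\widetilde R_n$ into Definition \ref{def:S_theta} and evaluate the resulting Volkenborn integral termwise. This split is legitimate because each summand is Volkenborn integrable (as already observed in the excerpt) and the Volkenborn integral is linear on integrable functions. There are two kinds of summands: the ``logarithmic'' ones $r_{1,k}\log_p\langle t+k+\tfrac jp\rangle$, and the ``power'' ones $\tfrac{r_{i,k}}{1-i}(t+k+\tfrac jp)^{1-i}$ with $2\leqslant i\leqslant p-1+s$. For both types the idea is to use the translation formula (Lemma \ref{lem:Volkenborn_translation}) to reduce the shift by the positive integer $k$ to a shift by $\tfrac jp$ alone, at the cost of a finite sum of derivatives evaluated at $0,1,\dots,k-1$; these finite sums are what will assemble into $\rho_{0,j/p}$.

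For a power summand, set $\phi(t):=(t+\tfrac jp)^{1-i}$; since $\gcd(j,p)=1$ one has $v_p(t+\tfrac jp)=-1$ for every $t\in\mathbb Z_p$, so $t+\tfrac jp$ is invertible and $\phi$ is strictly differentiable on $\mathbb Z_p$ with $\phi'(t)=(1-i)(t+\tfrac jp)^{-i}$. Lemma \ref{lem:Volkenborn_translation} then gives $\int_{\mathbb Z_p}(t+k+\tfrac jp)^{1-i}\,\mathrm dt=\int_{\mathbb Z_p}(t+\tfrac jp)^{1-i}\,\mathrm dt+(1-i)\sum_{\nu=0}^{k-1}(\nu+\tfrac jp)^{-i}$. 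Because $\omega$ is locally constant and $\omega(t+\tfrac jp)=\omega(\tfrac jp)$ for all $t\in\mathbb Z_p$, we have $(t+\tfrac jp)^{1-i}=\omega(\tfrac jp)^{1-i}\langle t+\tfrac jp\rangle^{1-i}$, and \eqref{eq:zeta_Volkenborn} with $s=i$ gives $\int_{\mathbb Z_p}\langle t+\tfrac jp\rangle^{1-i}\,\mathrm dt=(i-1)\zeta_p(i,\tfrac jp)$. For a logarithmic summand, $g(t):=\log_p\langle t+\tfrac jp\rangle$ is strictly differentiable on $\mathbb Z_p$ with $g'(t)=1/(t+\tfrac jp)$ (since $f(t)=\log_p\langle t\rangle$ has $f'(t)=1/t$), so Lemma \ref{lem:Volkenborn_translation} gives $\int_{\mathbb Z_p}\log_p\langle t+k+\tfrac jp\rangle\,\mathrm dt=\int_{\mathbb Z_p}\log_p\langle t+\tfrac jp\rangle\,\mathrm dt+\sum_{\nu=0}^{k-1}(\nu+\tfrac jp)^{-1}$. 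The crucial point is that after multiplying by $r_{1,k}$ and summing over $k$, the $k$-independent integral $\int_{\mathbb Z_p}\log_p\langle t+\tfrac jp\rangle\,\mathrm dt$ is multiplied by $\rho_1=\sum_k r_{1,k}$, which vanishes by Lemma \ref{rho_1_is_zero}; hence no logarithm survives in $S_{j/p}$.

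Assembling, the power summands contribute to $-\int_{\mathbb Z_p}\widetilde R_n(t+\tfrac jp)\,\mathrm dt$ the amount $\sum_{i=2}^{p-1+s}\bigl(\sum_{k=1}^n r_{i,k}\bigr)\omega(\tfrac jp)^{1-i}\zeta_p(i,\tfrac jp)-\sum_{i=2}^{p-1+s}\sum_{k=1}^n r_{i,k}\sum_{\nu=0}^{k-1}(\nu+\tfrac jp)^{-i}$ (multiplying the displayed identity by $-\tfrac{r_{i,k}}{1-i}$ makes the $(i-1)$ and $(1-i)$ factors cancel the denominator), while the logarithmic summands contribute $-\sum_{k=1}^n r_{1,k}\sum_{\nu=0}^{k-1}(\nu+\tfrac jp)^{-1}$. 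The first piece is exactly the asserted $\zeta$-part with $\rho_i=\sum_k r_{i,k}$, which visibly does not depend on $j$, and the remaining two triple/double sums combine into $-\sum_{i=1}^{p-1+s}\sum_{k=1}^n\sum_{\nu=0}^{k-1}r_{i,k}(\nu+\tfrac jp)^{-i}=\rho_{0,j/p}$, which is the claim. I expect the only delicate point to be the Teichm\"uller bookkeeping — checking $v_p(t+k+\tfrac jp)=-1$ and $\omega(t+k+\tfrac jp)=\omega(\tfrac jp)$ so that one may pass freely between $(\,\cdot\,)^{1-i}$ and $\langle\,\cdot\,\rangle^{1-i}$ and the conversion is independent of $k$ — together with tracking the $(1-i)$ versus $(i-1)$ signs; neither is deep, but both must be done carefully for the coefficients to come out exactly as stated.
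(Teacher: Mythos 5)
Your proposal is correct and follows essentially the same route as the paper: split $\widetilde R_n(t+\tfrac jp)$ into its logarithmic and power summands, apply the translation formula (Lemma \ref{lem:Volkenborn_translation}) to each, identify the unshifted power integrals with $-\omega(\tfrac jp)^{1-i}\zeta_p(i,\tfrac jp)$ via \eqref{eq:zeta_Volkenborn}, and kill the surviving logarithmic integral using $\rho_1=0$ from Lemma \ref{rho_1_is_zero}. The only difference is cosmetic (you track the $\tfrac1{1-i}$ factor separately and make the Teichm\"uller conversion $\omega(t+\tfrac jp)=\omega(\tfrac jp)$ explicit, which the paper leaves implicit), so no further changes are needed.
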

	
\begin{proof}
Fix $j \in \{1,2,\ldots,p-1\}$.	
We have 
\begin{equation}
\label{S_theta}
S_{j/p} = -\sum_{k=1}^{n} r_{1,k}\int_{\mathbb{Z}_p} \log_p\left\langle t+k+\frac{j}{p} \right\rangle \mathrm{d}t - \sum_{i=2}^{p-1+s}\sum_{k=1}^{n} r_{i,k}\int_{\mathbb{Z}_p} \frac{\mathrm{d}t}{(1-i)\left(t+k+\frac{j}{p}\right)^{i-1}}.
\end{equation}
By Lemma \ref{lem:Volkenborn_translation}, for any $i\in\{2,3,\ldots,p-1+s\}$ and $k \in \{1,2,\ldots,n\}$ we have
\begin{equation}
\label{int_log_p(t+k+theta)}
\int_{\mathbb{Z}_p} \log_p\left\langle t+k+\frac{j}{p} \right\rangle \mathrm{d}t = \int_{\mathbb{Z}_p} \log_p\left\langle t+\frac{j}{p} \right\rangle \mathrm{d}t + \sum_{\nu=0}^{k-1} \frac{1}{\nu +\frac{j}{p}}, 
\end{equation}
and using Eq. \eqref{eq:zeta_Volkenborn}, we get
\begin{align}
\int_{\mathbb{Z}_p}\frac{\mathrm{d}t}{(1-i)\left(t+k+\frac{j}{p}\right)^{i-1}} &= \int_{\mathbb{Z}_p}\frac{\mathrm{d}t}{(1-i)\left(t+\frac{j}{p}\right)^{i-1}} + \sum_{\nu=0}^{k-1} \frac{1}{\left(\nu+\frac{j}{p}\right)^{i}} \notag\\
&= -\omega\left(\frac{j}{p}\right)^{1-i}\zeta_p\left(i,\frac{j}{p}\right) + \sum_{\nu=0}^{k-1} \frac{1}{\left(\nu+\frac{j}{p}\right)^{i}}. \label{int_(t+k+theta)^(1-i)}
\end{align}
Substituting \eqref{int_log_p(t+k+theta)} and \eqref{int_(t+k+theta)^(1-i)} into \eqref{S_theta}, we obtain
\[ 
S_{j/p} = \rho_{0,j/p} + \sum_{i=2}^{p-1+s} \rho_i \cdot \omega\left(\frac{j}{p}\right)^{1-i}\zeta_p\left(i,\frac{j}{p}\right) - \rho_1 \cdot \int_{\mathbb{Z}_p} \log_p\left\langle t+\frac{j}{p} \right\rangle \mathrm{d}t. 
\]
Since $\rho_1 = 0$ by Lemma \ref{rho_1_is_zero}, the proof of Lemma \ref{lemma_linear_form_S_theta} is complete.
\end{proof}
	
\bigskip

Next, we take the sum of $S_{j/p}$ ($j=1,2,\ldots,p-1$) to obtain linear forms in $1$ and $p$-adic zeta values.
	
\begin{definition}
\label{definition_linear_form_S_n}
We define
\[ 
S_n := \sum_{j=1}^{p-1} S_{j/p}. 
\]
\end{definition}
	
\begin{lemma}
\label{lemma_linear_form_S_n}
We have
\[ 
S_n = \rho_{0} + \sum_{3 \leqslant i \leqslant p-1+s \atop i \text{~odd}} \rho_i \cdot p^{i} \zeta_p(i), 
\]
where the coefficients $\rho_i$ are those in \eqref{definition_rho_i} and 
\begin{equation}
\label{definition_rho_0}
\rho_{0} := \sum_{j=1}^{p-1} \rho_{0,j/p}.
\end{equation}
\end{lemma}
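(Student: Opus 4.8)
The plan is to compute $S_n=\sum_{j=1}^{p-1}S_{j/p}$ directly from Lemma \ref{lemma_linear_form_S_theta}, and then to collapse the resulting sum of $p$-adic Hurwitz zeta values into $p$-adic zeta values by means of Lemma \ref{lem:padicHurwitz_zeta}.

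First, since the coefficients $\rho_i$ of \eqref{definition_rho_i} do not depend on $j$, summing the formula of Lemma \ref{lemma_linear_form_S_theta} over $j=1,\ldots,p-1$ and using Definition \ref{definition_linear_form_S_n} yields
\[ S_n = \sum_{j=1}^{p-1}\rho_{0,j/p} + \sum_{i=2}^{p-1+s}\rho_i\sum_{j=1}^{p-1}\omega\!\left(\tfrac{j}{p}\right)^{1-i}\zeta_p\!\left(i,\tfrac{j}{p}\right), \]
and the first term is exactly $\rho_0$ by \eqref{definition_rho_0}. Note that each Hurwitz zeta value appearing here is admissible, since $|j/p|_p=p>1$ for $1\leqslant j\leqslant p-1$ and $i\geqslant 2\neq 1$, so the preceding lemmas do apply.

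Next, for each fixed $i\in\{2,\ldots,p-1+s\}$ I would apply Lemma \ref{lem:padicHurwitz_zeta} with $D=p$: the hypothesis $p\mid D$ holds, and the condition $\gcd(j,p)=1$ in the range $1\leqslant j\leqslant p$ merely removes $j=p$, so the summation set is $\{1,\ldots,p-1\}$. Hence $\sum_{j=1}^{p-1}\omega(j/p)^{1-i}\zeta_p(i,j/p)=p^i\zeta_p(i)$, and therefore
\[ S_n = \rho_0 + \sum_{i=2}^{p-1+s}\rho_i\,p^i\,\zeta_p(i). \]
Finally, $\zeta_p(i)=0$ for every even positive integer $i$ (as recalled in \S\ref{sec:Prelim}), so all terms with $i$ even\,---\,in particular the $i=2$ term\,---\,vanish, leaving only the odd indices $3\leqslant i\leqslant p-1+s$. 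This is precisely the asserted identity.

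The argument is short and presents no real obstacle; the one genuinely substantive ingredient is the distribution relation of Lemma \ref{lem:padicHurwitz_zeta}, which is exactly what converts the average over the residues $j/p$ into a single $p$-adic zeta value. The remaining care is purely bookkeeping: verifying that the Hurwitz zeta functions are evaluated inside their domain and that the vanishing of even $p$-adic zeta values is invoked with the correct parity.
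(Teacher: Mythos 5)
Your proposal is correct and follows exactly the paper's own argument: sum the formula of Lemma \ref{lemma_linear_form_S_theta} over $j=1,\ldots,p-1$, apply Lemma \ref{lem:padicHurwitz_zeta} with $D=p$ to obtain $p^i\zeta_p(i)$, and discard the even indices using $\zeta_p(2k)=0$. The extra checks you include (domain of the Hurwitz zeta values, the effect of the $\gcd$ condition) are harmless bookkeeping that the paper leaves implicit.
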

	
\begin{proof}
By Lemma \ref{lemma_linear_form_S_theta} and Lemma \ref{lem:padicHurwitz_zeta}, we have
\begin{align*}
S_n &= \sum_{j=1}^{p-1} \rho_{0,j/p} + \sum_{i=2}^{p-1+s} \rho_i \sum_{j=1}^{p-1} \omega\left(\frac{j}{p}\right)^{1-i}\zeta_p\left(i,\frac{j}{p}\right) \\
&= \rho_{0} + \sum_{i=2}^{p-1+s} \rho_i \cdot p^{i} \zeta_p(i).
\end{align*} 
Since $\zeta_p(2k)=0$ for every positive integer $k$, the proof of Lemma \ref{lemma_linear_form_S_n} is complete.
\end{proof}

\section{Arithmetic properties}
\label{sec:Arithmetic}
	
The goal of this section is to study the arithmetic properties of the coefficients of the linear forms constructed in Section \ref{sec:LinearForms}. 
	
As usual, we denote by $d_n := \operatorname{LCM}\{1,2,\ldots,n\}$ the least common multiple of the smallest $n$ positive integers. 
For any non-negative integer $\lambda$, we define the differential operator
\[
\mathcal{D}_{\lambda}:=\frac{1}{\lambda !} \left(\frac{\mathrm{d}}{\mathrm{d}t}\right)^{\lambda}.
\] 
	
We first state two basic lemmas.
	
\begin{lemma}[{\cite[Lemma 16]{Zud2004}}]
\label{lemma_G(t)}
Let $n$ be any non-negative integer and let $G(t)=n!/(t)_{n+1}$. 
Then we have
\[ 
d_n^{\lambda} \mathcal{D}_{\lambda}\left( G(t)(t+k) \right)\big|_{t=-k} \in \mathbb{Z} 
\]
for any integer $k \in \{0,1,\ldots,n\}$ and any integer $\lambda \geqslant 0$.
\end{lemma}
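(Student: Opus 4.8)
\emph{Proof strategy.} The plan is to reduce the evaluation at $t=-k$ to the extraction of a single Taylor coefficient, and then to expand the resulting rational function of one variable as a product of geometric series. Writing $g_k(t) := G(t)(t+k)$ and using $(t)_{n+1} = \prod_{j=0}^{n}(t+j)$, one has
\[
g_k(t) = \frac{n!}{\prod_{0\leqslant j\leqslant n,\; j\neq k}(t+j)}.
\]
The role of the factor $(t+k)$ is exactly to cancel the pole of $G$ at $t=-k$, so $g_k$ is regular there. Substituting $t=-k+x$, the index $j-k$ running over $\{0,\dots,n\}\setminus\{k\}$ becomes $-m$ with $m\in\{1,\dots,k\}$ or $+m$ with $m\in\{1,\dots,n-k\}$, whence
\[
g_k(-k+x) = \frac{n!}{\prod_{m=1}^{k}(x-m)\cdot\prod_{m=1}^{n-k}(x+m)} = (-1)^{k}\binom{n}{k}\prod_{m=1}^{k}\frac{1}{1-x/m}\prod_{m=1}^{n-k}\frac{1}{1+x/m},
\]
after pulling out $k!$ and $(n-k)!$ and using $n!/(k!(n-k)!)=\binom{n}{k}$; when $k=0$ or $k=n$ one of the two products is empty and the identity still holds.

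Next I would expand each factor as a geometric series $\tfrac{1}{1\mp x/m}=\sum_{\ell\geqslant 0}(\pm x/m)^{\ell}$, which in particular reads off all Taylor coefficients of $g_k$ at $x=0$. Collecting the coefficient of $x^{\lambda}$, every contributing monomial has the form $\pm 1/(m_1 m_2\cdots m_{\lambda})$ for integers $m_i$ (with repetitions) taken from $\{1,\dots,k\}\cup\{1,\dots,n-k\}$; since $0\leqslant k\leqslant n$, each such $m_i$ satisfies $m_i\leqslant n$. Hence the coefficient of $x^{\lambda}$ in $g_k(-k+x)$ is $(-1)^k\binom{n}{k}$ times an integer linear combination of reciprocals $1/(m_1\cdots m_{\lambda})$ with all $m_i\leqslant n$.

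Finally, since $\mathcal{D}_{\lambda}=\tfrac{1}{\lambda!}(\mathrm{d}/\mathrm{d}t)^{\lambda}$ extracts precisely the coefficient of $x^{\lambda}$ in the expansion of $g_k$ about $t=-k$, the quantity $\mathcal{D}_{\lambda}(G(t)(t+k))\big|_{t=-k}$ equals that coefficient; multiplying by $d_n^{\lambda}$ clears all denominators at once, because each $m_i$ divides $d_n=\operatorname{LCM}\{1,\dots,n\}$, so $d_n^{\lambda}/(m_1\cdots m_{\lambda}) = \prod_{i}(d_n/m_i)\in\mathbb{Z}$, while $\binom{n}{k}\in\mathbb{Z}$. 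This yields $d_n^{\lambda}\mathcal{D}_{\lambda}(G(t)(t+k))|_{t=-k}\in\mathbb{Z}$, as claimed. I do not expect a genuine obstacle here: the argument is elementary, and the only points requiring care are the bookkeeping of signs and empty products in the substitution $t=-k+x$, together with the crucial (but immediate) observation that every denominator $m_i$ appearing is at most $n$, which is exactly what makes the single power $d_n^{\lambda}$ sufficient to clear them.
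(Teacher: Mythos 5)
Your proof is correct: the substitution $t=-k+x$, the factorization pulling out $(-1)^k\binom{n}{k}$, the geometric-series expansion producing coefficients that are integer combinations of terms $\pm 1/(m_1\cdots m_\lambda)$ with all $m_i\leqslant n$, and the observation that $d_n^{\lambda}$ clears these denominators are all sound, including the edge cases $k=0$, $k=n$, $\lambda=0$. The paper itself gives no proof of this lemma (it is quoted from Zudilin's work), and your argument is essentially the standard elementary one used there, so there is nothing to flag.
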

		
\begin{lemma}[{\cite[Lemma 4.2]{Lai2025+}}]
\label{lemma_F(t)}
Let $n$ be any non-negative integer. 
Let $a,b$ be any integers with $b>0$. 
Consider the polynomial
\[ 
F(t) = b^{n}\left(\prod_{q \mid b \atop q \text{~prime}} q^{\left\lfloor n/(q-1) \right\rfloor} \right)\cdot \frac{\left(t+ \frac{a}{b} \right)_{n}}{n!}.  
\]
Then, for any integer $k \in \mathbb{Z}$ and any integer $\lambda \geqslant 0$, we have
\[ 
d_n^{\lambda} \mathcal{D}_{\lambda} (F(t)) \big|_{t=-k} \in \mathbb{Z}. 
\]
\end{lemma}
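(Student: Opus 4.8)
\emph{Proof strategy.} The plan is to reduce the assertion to a prime‑by‑prime $p$‑adic valuation estimate. The number $d_n^{\lambda}\,\mathcal{D}_{\lambda}F(t)\big|_{t=-k}$ is visibly rational, so it lies in $\mathbb{Z}$ precisely when $v_p\bigl(d_n^{\lambda}\,\mathcal{D}_{\lambda}F(t)\big|_{t=-k}\bigr)\geqslant 0$ for every prime $p$. Fix such a $p$ and write $\Phi_b(n):=\prod_{q\mid b,\ q\text{ prime}}q^{\lfloor n/(q-1)\rfloor}$, so that $F(t)=\dfrac{b^{n}\,\Phi_b(n)}{n!}\prod_{j=0}^{n-1}\bigl(t+\tfrac{a}{b}+j\bigr)$. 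Since each factor $t+\tfrac ab+j$ is linear with derivative $1$, the Leibniz rule collapses to $\mathcal{D}_{\lambda}\bigl(\prod_{j=0}^{n-1}(t+\tfrac ab+j)\bigr)=\sum_{|S|=\lambda}\prod_{j\notin S}(t+\tfrac ab+j)$, the sum over $\lambda$‑element subsets $S\subseteq\{0,1,\dots,n-1\}$; evaluating at $t=-k$ and clearing powers of $b$ yields
\[
\mathcal{D}_{\lambda}F(t)\big|_{t=-k}=\frac{b^{\lambda}\,\Phi_b(n)}{n!}\sum_{\substack{S\subseteq\{0,\dots,n-1\}\\ |S|=\lambda}}\ \prod_{j\notin S}\bigl(a+(j-k)b\bigr),
\]
where each inner product is an integer. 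Because the $p$‑adic valuation of a sum is at least the minimum of the valuations of its summands, it suffices to prove, for every such $S$, that
\[
\lambda\, v_p(d_n)+\lambda\, v_p(b)+v_p(\Phi_b(n))+\sum_{j\notin S}v_p\bigl(a+(j-k)b\bigr)\ \geqslant\ v_p(n!).
\]

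Now split on whether $p\mid b$. If $p\mid b$, the inequality is immediate: $v_p(\Phi_b(n))=\lfloor n/(p-1)\rfloor$, while Legendre's formula gives $v_p(n!)=(n-s_p(n))/(p-1)\leqslant\lfloor n/(p-1)\rfloor$ with $s_p(n)$ the base‑$p$ digit sum of $n$, and the remaining three terms on the left are $\geqslant 0$. If $p\nmid b$, then $v_p(b)=v_p(\Phi_b(n))=0$, and we must show $\lambda\,v_p(d_n)+\sum_{j\notin S}v_p(a+(j-k)b)\geqslant v_p(n!)$. Here I would count level sets: for each $r\geqslant 1$ the congruence $a+(j-k)b\equiv 0\pmod{p^r}$ determines $j$ modulo $p^r$ (since $b\in\mathbb{Z}_p^{\times}$), so among the $n$ consecutive values $j=0,\dots,n-1$ the number $N_r$ of solutions satisfies $N_r\geqslant\lfloor n/p^r\rfloor$; deleting the $\lambda$ indices of $S$ removes at most $\lambda$ of them, whence
\[
\sum_{j\notin S}v_p\bigl(a+(j-k)b\bigr)=\sum_{r\geqslant 1}\#\{\,j\notin S:\ p^r\mid a+(j-k)b\,\}\ \geqslant\ \sum_{r\geqslant 1}\max\bigl(0,\,N_r-\lambda\bigr).
\]
Combining this with $v_p(n!)=\sum_{r\geqslant 1}\lfloor n/p^r\rfloor$ and $N_r\geqslant\lfloor n/p^r\rfloor$ gives $v_p(n!)-\sum_{j\notin S}v_p(a+(j-k)b)\leqslant\sum_{r\geqslant 1}\min\bigl(\lfloor n/p^r\rfloor,\lambda\bigr)\leqslant\lambda\cdot\#\{\,r\geqslant 1:\ p^r\leqslant n\,\}=\lambda\,v_p(d_n)$, exactly the bound required. (The degenerate possibility $a+(j_0-k)b=0$ is harmless: at most one such $j_0$ exists, and it only raises the left‑hand side.)

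The only step that needs genuine care is this last estimate in the case $p\nmid b$: one must see that $\lambda$‑fold differentiation, which can strip away up to $\lambda$ of the $p$‑divisible linear factors of $\prod_j(a+(j-k)b)$, is compensated \emph{exactly} by the factor $d_n^{\lambda}$, via the elementary inequality $\sum_{r\geqslant 1}\min(\lfloor n/p^r\rfloor,\lambda)\leqslant\lambda\lfloor\log_p n\rfloor$. The purpose of the correction factor $\Phi_b(n)$ is precisely to repair the primes dividing $b$, for which no such factorial cancellation is available; this is the ``numerator'' counterpart of classical denominator estimates such as Lemma \ref{lemma_G(t)}.
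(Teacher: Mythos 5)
Your proof is correct, and it is worth noting that the paper itself does not prove this lemma at all: it is quoted verbatim from \cite[Lemma 4.2]{Lai2025+}, so there is no in-paper argument to compare against and you have in effect supplied the missing proof. Your route is the natural prime-by-prime valuation argument, and the key steps all check out: the collapse of $\mathcal{D}_\lambda$ applied to a product of monic linear factors into a sum over $\lambda$-element subsets $S$ is exact (the $1/\lambda!$ in $\mathcal{D}_\lambda$ cancels the ordering factor), the power of $b$ after clearing denominators is indeed $b^{\lambda}$, and the reduction to a per-summand inequality via $v_p(\text{sum})\geqslant\min$ is legitimate. In the case $p\mid b$, the comparison $v_p(n!)=(n-s_p(n))/(p-1)\leqslant\lfloor n/(p-1)\rfloor$ (an integer bounded by $n/(p-1)$) is what the correction factor $\prod_{q\mid b}q^{\lfloor n/(q-1)\rfloor}$ is designed for, and the remaining terms are nonnegative. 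In the case $p\nmid b$, your counting $N_r\geqslant\lfloor n/p^r\rfloor$ (a fixed residue class modulo $p^r$ among $n$ consecutive values of $j$), the loss of at most $\lambda$ indices to $S$, and the final estimate $\sum_{r\geqslant1}\min(\lfloor n/p^r\rfloor,\lambda)\leqslant\lambda\,\#\{r:p^r\leqslant n\}=\lambda\,v_p(d_n)$ are all correct, and the degenerate case $a+(j_0-k)b=0$ is handled properly since it only makes valuations infinite. This is essentially the standard argument for such ``integer-valued with denominator $d_n^\lambda$'' statements (in the spirit of Lemma \ref{lemma_G(t)}), so your proof is a faithful, self-contained substitute for the external citation.
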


Now, we bound the denominators of the coefficients $r_{i,k}$ defined by \eqref{def:r_ik}.
	
\begin{lemma}
\label{lemma_r_ik_weak}
For any $i \in \{1,2,\ldots,p-1+s\}$ and any $k \in \{1,2,\ldots,n\}$, we have
\[ 
d_n^{p-1+s-i} r_{i,k} \in \mathbb{Z}. 
\]
\end{lemma}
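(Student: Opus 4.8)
The plan is to write each coefficient $r_{i,k}$ as a single divided derivative of $R_n(t)$, and then to factor $(t+k)^{p-1+s}R_n(t)$ into building blocks that are already controlled by Lemmas \ref{lemma_G(t)} and \ref{lemma_F(t)}.

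First I would record the residue formula for the $r_{i,k}$. Since $t=0$ is not a pole of $R_n(t)$ (because $M_0 > p-1+s$) and since each factor $t+k$ occurs to the power exactly $p-1+s$ in the denominator of $R_n(t)$ while never occurring in the numerator, the only poles of $R_n(t)$ are at $t=-1,\dots,-n$, each of order $p-1+s$. Expanding the analytic function $(t+k)^{p-1+s}R_n(t)$ as a Taylor series around $t=-k$ and comparing with \eqref{def:r_ik} yields
\[
r_{i,k} = \mathcal{D}_{p-1+s-i}\!\left[(t+k)^{p-1+s}R_n(t)\right]\Big|_{t=-k},
\]
so, writing $\lambda := p-1+s-i \geqslant 0$, the assertion is equivalent to $d_n^{\lambda}\,\mathcal{D}_{\lambda}\!\left[(t+k)^{p-1+s}R_n(t)\right]\big|_{t=-k}\in\mathbb{Z}$.

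Next I would rewrite $R_n(t)$ using $G(t)=n!/(t)_{n+1}$ and, for $j=1,\dots,p-1$, the polynomials $F_j(t) = p^{\,n+\lfloor n/(p-1)\rfloor}(t+j/p)_n/n!$ (this is $F(t)$ of Lemma \ref{lemma_F(t)} with $a=j$, $b=p$). Substituting $(t)_{n+1}^{p-1+s}=n!^{p-1+s}/G(t)^{p-1+s}$ and $(t+j/p)_n = n!\,F_j(t)/p^{\,n+\lfloor n/(p-1)\rfloor}$ into Definition \ref{def:R_n(t)}, all factorials cancel and the power of $p$ collapses to $p^{e_n}$ with $e_n = n-(p-1)\lfloor n/(p-1)\rfloor \in\{0,1,\dots,p-2\}$, a non-negative integer, giving the clean identity
\[
R_n(t) = p^{e_n}\, t^{M_0}\left(\prod_{j=1}^{p-1}F_j(t)\right)G(t)^{p-1+s}.
\]
Hence $(t+k)^{p-1+s}R_n(t) = p^{e_n}\,t^{M_0}\big(\prod_{j=1}^{p-1}F_j(t)\big)\big((t+k)G(t)\big)^{p-1+s}$ is a finite product of: the integer $p^{e_n}$, the polynomial $t^{M_0}$, the polynomials $F_1(t),\dots,F_{p-1}(t)$, and $p-1+s$ copies of $(t+k)G(t)$.

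Finally I would invoke the generalized Leibniz rule $\mathcal{D}_{\lambda}(g_1\cdots g_m)=\sum_{\lambda_1+\cdots+\lambda_m=\lambda}\mathcal{D}_{\lambda_1}(g_1)\cdots\mathcal{D}_{\lambda_m}(g_m)$ together with $d_n^{\lambda}=d_n^{\lambda_1}\cdots d_n^{\lambda_m}$ whenever $\lambda_1+\cdots+\lambda_m=\lambda$. For each factor, $d_n^{\mu}\mathcal{D}_{\mu}(g)\big|_{t=-k}\in\mathbb{Z}$: for $g=(t+k)G(t)$ this is Lemma \ref{lemma_G(t)} (applicable since $k\in\{1,\dots,n\}$), for $g=F_j$ it is Lemma \ref{lemma_F(t)}, and for $g=t^{M_0}$ it is immediate since $t^{M_0}\in\mathbb{Z}[t]$ and divided derivatives preserve integral polynomial coefficients. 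Distributing $\lambda$ over the factors and pulling out the integer $p^{e_n}$, every term of the Leibniz sum is an integer, so $d_n^{p-1+s-i}r_{i,k}=d_n^{\lambda}\mathcal{D}_{\lambda}[(t+k)^{p-1+s}R_n(t)]|_{t=-k}\in\mathbb{Z}$. The only points requiring care — and, in the absence of any deeper difficulty, the main thing to get right — are verifying that the exponent of $p$ in the factorization of $R_n(t)$ is genuinely a non-negative integer, and pinning down the exact pole order $p-1+s$ (in particular ruling out a pole at $t=0$) so that the residue formula for $r_{i,k}$ is valid.
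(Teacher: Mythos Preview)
Your proof is correct and follows essentially the same approach as the paper: the same residue formula for $r_{i,k}$, the same factorization of $R_n(t)$ into the integer $p^{\,n-(p-1)\lfloor n/(p-1)\rfloor}$, the monomial $t^{M_0}$, the blocks $F_j(t)$ from Lemma~\ref{lemma_F(t)}, and $p-1+s$ copies of $(t+k)G(t)$ from Lemma~\ref{lemma_G(t)}, followed by the Leibniz rule. The paper's write-up is nearly identical, differing only in notation (it names the constant $A$ and the monomial $F_\star$).
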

	
\begin{proof}
By \eqref{def:r_ik}, we have
\[
r_{i,k} = \mathcal{D}_{p-1+s-i}(R_n(t)(t+k)^{p-1+s})\big|_{t=-k}.
\]
Let
\begin{align*}
G(t) &= \frac{n!}{(t)_{n+1}},\\
F_{j/p}(t) &= p^n \cdot p^{\lfloor n/(p-1) \rfloor} \cdot \frac{\left(t+\frac{j}{p}\right)_{n}}{n!} \quad\text{for any~} j \in \{1,2,\ldots,p-1\},\\
F_{\star}(t) &= t^{M_0},\\
A &= p^{n-(p-1)\lfloor n/(p-1)\rfloor} \in \mathbb{N}. \notag
\end{align*}
It is straightforward to check that the following identity holds (see Definition \ref{def:R_n(t)}):
\begin{equation}
\label{Rn(t)_building_blocks}
R_n(t) =  A \cdot F_{\star}(t) \cdot G(t)^{p-1+s} \cdot \prod_{j=1}^{p-1} F_{j/p}(t).
\end{equation}
Therefore,
\begin{align*}
r_{i,k} &= \mathcal{D}_{p-1+s-i}(R_n(t)(t+k)^{p-1+s})\big|_{t=-k} \\
&= \mathcal{D}_{p-1+s-i}\left( A \cdot F_{\star}(t) \cdot (G(t)(t+k))^{p-1+s} \cdot \prod_{j=1}^{p-1} F_{j/p}(t) \right)\Big|_{t=-k}.
\end{align*}
Applying the Leibniz rule, we obtain
\begin{align}
d_n^{p-1+s-i}r_{i,k} = A \cdot \sum_{\boldsymbol{\lambda}}~  &d_n^{\lambda_{\star}} \mathcal{D}_{\lambda_{\star}}(F_{\star}(t)) \notag\\
&\times \prod_{\nu=1}^{p-1+s}d_n^{\lambda_\nu}\mathcal{D}_{\lambda_\nu}(G(t)(t+k)) \cdot \prod_{j=1}^{p-1} d_n^{\lambda_{j/p}}\mathcal{D}_{\lambda_{j/p}}(F_{j/p}(t)) \big|_{t=-k}, \label{eqn_r_ik}
\end{align}
where the sum is taken over all families of non-negative integers 
\[
\boldsymbol{\lambda} = (\lambda_{\star},(\lambda_\nu)_{1 \leqslant \nu \leqslant p-1+s}, (\lambda_{j/p})_{1 \leqslant j \leqslant p-1}) \] 
such that
\[ 
\lambda_{\star}+\sum_{\nu=1}^{p-1+s} \lambda_{\nu} + \sum_{j=1}^{p-1} \lambda_{j/p} = p-1+s-i. 
\]
By Lemma \ref{lemma_G(t)} and Lemma \ref{lemma_F(t)}, the value at $t=-k$ of each factor in the product in \eqref{eqn_r_ik} is an integer. 
Clearly, $\mathcal{D}_{\lambda_{\star}}(F_{\star}(t)) \big|_{t=-k}$ is also an integer. 
Therefore, we have $d_n^{p-1+s-i}r_{i,k} \in \mathbb{Z}$ as desired. 
\end{proof}
	
\bigskip
	
By Lemma \ref{lemma_r_ik_weak}, all of $d_n^{p-1+s-i}r_{i,k}$ are integers. 
The next lemma shows that these integers $d_n^{p-1+s-i}r_{i,k}$ have a large common divisor. 
A similar phenomenon is a key ingredient in the proof of Zudilin's theorem \cite{Zud2001}. 
Recall that we have assumed $n>(p+s)^4$.
	
\begin{lemma}
\label{lemma_r_ik}
For any $i \in \{1,2,\ldots,p-1+s\}$ and any $k \in \{1,2,\ldots,n\}$, we have
\[ 
\Phi_{n}^{-1} d_n^{p-1+s-i} r_{i,k} \in \mathbb{Z}, 
\]
where
\begin{align}
\Phi_{n} := \prod_{\sqrt{pn} < q \leqslant n \atop q~\text{prime}} q^{\phi\left( \frac{n}{q} \right)}, \label{def:Phi_n}
\end{align}
with the function $\phi:~\mathbb{R} \rightarrow \mathbb{Z}$ defined by
\begin{equation}
\label{def:phi}
\phi(x) := 
\begin{cases}
0, &\text{~if~} \{x\} \in \left[0,\frac{2}{p}\right), \\
j-1, &\text{~if~} \{x\} \in 	\left[\frac{j}{p},\frac{j+1}{p}\right), ~j=2,3,\ldots,p-1.
\end{cases}
\end{equation}
Here, $\{x\} = x -\lfloor x \rfloor$ denotes the fractional part of a real number $x$.
\end{lemma}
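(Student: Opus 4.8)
The plan is to bound the $q$-adic valuation $v_q\!\left(d_n^{p-1+s-i}r_{i,k}\right)$ one prime at a time. Fix a prime $q$ with $\sqrt{pn}<q\leqslant n$. Since $\Phi_n$ is supported exactly on such primes and since Lemma~\ref{lemma_r_ik_weak} gives $d_n^{p-1+s-i}r_{i,k}\in\mathbb{Z}$, it suffices to prove that
\[
v_q\!\left(d_n^{p-1+s-i}r_{i,k}\right)\geqslant\phi(n/q)\qquad\text{for all admissible }i,k.
\]
First I would record the facts available in this range: since $n>(p+s)^4$ we have $q>\sqrt{pn}>p$, so $q\neq p$; since $q^2>pn>n$ we have $v_q(d_n)=1$; and for $m\in\{0,\dots,n-1\}$, $k\in\{1,\dots,n\}$, $j\in\{1,\dots,p-1\}$ the integer $p(m-k)+j$ is nonzero of absolute value $<pn<q^2$, hence $v_q\!\left(p(m-k)+j\right)\leqslant 1$, with equality exactly when $m\equiv k-jp^{-1}\pmod q$, where $p^{-1}$ denotes the inverse of $p$ modulo $q$. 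Set $\rho:=n\bmod q$; for each $j$ put $m_j:=(k-jp^{-1})\bmod q$ and let $\varepsilon_j:=1$ if $m_j<\rho$ and $\varepsilon_j:=0$ otherwise, so that $\varepsilon_j=1$ precisely when the residue class $m_j$ meets $\{0,\dots,n-1\}$ in $\lfloor n/q\rfloor+1$ integers rather than in $\lfloor n/q\rfloor$. (The source of the extra divisibility is that $p^{pn}\prod_{j=1}^{p-1}(t+\tfrac{j}{p})_n=(pt)_{pn}/(t)_n$ by the Gauss multiplication formula, so the numerator of $R_n$ carries a Pochhammer symbol dilated by $p$, just as in Zudilin's construction.)

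The engine is the Leibniz expansion~\eqref{eqn_r_ik} obtained in the proof of Lemma~\ref{lemma_r_ik_weak}, which writes $d_n^{p-1+s-i}r_{i,k}$ as $A$ times a sum of products of factors of the three types $d_n^{\lambda_\star}\mathcal{D}_{\lambda_\star}(F_\star)\big|_{t=-k}$, $d_n^{\lambda_\nu}\mathcal{D}_{\lambda_\nu}(G(t)(t+k))\big|_{t=-k}$ and $d_n^{\lambda_{j/p}}\mathcal{D}_{\lambda_{j/p}}(F_{j/p})\big|_{t=-k}$. As $v_q(A)=0$, and as the $F_\star$-factors are integers (being values of derivatives of the polynomial $t^{M_0}$) and the $G$-factors are integers by Lemma~\ref{lemma_G(t)}, it is enough to establish the two assertions
\[
v_q\!\left(d_n^{\lambda}\mathcal{D}_{\lambda}(F_{j/p})\big|_{t=-k}\right)\geqslant\varepsilon_j\ \ (\lambda\geqslant 0),\qquad\text{and}\qquad\sum_{j=1}^{p-1}\varepsilon_j\geqslant\phi(n/q).
\]
Together they force every summand of~\eqref{eqn_r_ik}, and hence $d_n^{p-1+s-i}r_{i,k}$ itself, to be divisible by $q^{\phi(n/q)}$.

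For the first assertion, the case $\varepsilon_j=0$ is exactly Lemma~\ref{lemma_F(t)}. When $\varepsilon_j=1$ I would factor $(t+\tfrac{j}{p})_n=\Pi_1(t)\,\Pi_2(t)$, where $\Pi_1$ collects the $\lfloor n/q\rfloor+1$ linear factors $t+m+\tfrac{j}{p}$ with $m\equiv m_j\pmod q$ and $\Pi_2$ the rest; by the facts above, at $t=-k$ every factor of $\Pi_1$ has $q$-valuation exactly $1$ and every factor of $\Pi_2$ has $q$-valuation $0$. Expanding $\mathcal{D}_\lambda(\Pi_1\Pi_2)=\sum_{\mu}\mathcal{D}_\mu(\Pi_1)\mathcal{D}_{\lambda-\mu}(\Pi_2)$ by the Leibniz rule and each factor by the formula for derivatives of a product of linear forms, one checks that $\mathcal{D}_\mu(\Pi_1)\big|_{t=-k}$ has $q$-valuation at least $\max\!\left(0,\lfloor n/q\rfloor+1-\mu\right)$ (and vanishes for $\mu>\deg\Pi_1$), while $\mathcal{D}_{\lambda-\mu}(\Pi_2)\big|_{t=-k}$ is $q$-integral; after dividing by $n!$ (valuation $\lfloor n/q\rfloor$) and multiplying by $d_n^{\lambda}$ (valuation $\lambda$), every surviving term has $q$-valuation at least $1$, which proves the bound.

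The second assertion, $\sum_{j=1}^{p-1}\varepsilon_j\geqslant\phi(n/q)$, is the heart of the matter and the step I expect to be the main obstacle; I may assume $\rho\geqslant1$, as otherwise $\phi(n/q)=0$ and there is nothing to prove. Unwinding the definitions, $\sum_j\varepsilon_j$ equals the number of $a\in\{0,\dots,\rho-1\}$ for which $v_a:=(v_0+ap)\bmod q$ lies in $\{1,\dots,p-1\}$, for a suitable $v_0\in\{0,\dots,q-1\}$. Lifting to $\mathbb{Z}$, the numbers $v_0,v_0+p,\dots,v_0+(\rho-1)p$ form a strictly increasing arithmetic progression of step $p<q$, so $\lfloor(v_0+ap)/q\rfloor$ grows by $0$ or $1$ and attains every value $0,1,\dots,L$ with $L:=\lfloor(v_0+(\rho-1)p)/q\rfloor$; at the smallest $a$ with $\lfloor(v_0+ap)/q\rfloor=\ell$ one has $v_a\in[0,p)$, which gives a good index unless $v_a=0$, and $v_a=0$ for at most one $a$. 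A short case analysis according to whether $v_0\in\{1,\dots,p-1\}$, $v_0=0$, or $v_0\in\{p,\dots,q-1\}$ then yields at least $\lfloor p\rho/q\rfloor-1$ good indices in each case; since $\{n/q\}=\rho/q$, definition~\eqref{def:phi} gives $\phi(n/q)=\max\!\left(0,\lfloor p\rho/q\rfloor-1\right)$, which is precisely the required bound. Running this over all primes $q\in(\sqrt{pn},n]$ and invoking Lemma~\ref{lemma_r_ik_weak} at the remaining primes yields $\Phi_n^{-1}d_n^{p-1+s-i}r_{i,k}\in\mathbb{Z}$, as desired.
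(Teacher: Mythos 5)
Your proposal is correct, but it takes a genuinely different route from the paper. You refine the proof of Lemma \ref{lemma_r_ik_weak} factor by factor: working prime by prime with $\sqrt{pn}<q\leqslant n$, you strengthen Lemma \ref{lemma_F(t)} to the statement that the block $d_n^{\lambda}\mathcal{D}_{\lambda}(F_{j/p})\big|_{t=-k}$ gains one extra power of $q$ whenever the residue class $m_j=(k-jp^{-1})\bmod q$ picks up $\lfloor n/q\rfloor+1$ roots of $(t+\tfrac{j}{p})_n$ in $\{0,\dots,n-1\}$, and you then prove the counting inequality $\sum_j\varepsilon_j\geqslant\phi(n/q)$ directly by the arithmetic-progression/jump argument (your reduction $\phi(n/q)=\max(0,\lfloor p\rho/q\rfloor-1)$ with $\rho=n\bmod q$, and the three-case analysis on $v_0$, both check out, as does the valuation bookkeeping using $v_q(d_n)=1$, $v_q(n!)=\lfloor n/q\rfloor$ and the bound $|p(m-k)+j|<q^2$). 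The paper instead proceeds by downward induction on the coefficient index: the top coefficient $r_{p-1+s,k}$ is evaluated in closed form via the rewriting \eqref{rewrite_R_n(t)}, its $q$-valuation is bounded by the elementary floor-function inequality $\lfloor py\rfloor+\lfloor px-py\rfloor-p\lfloor y\rfloor-p\lfloor x-y\rfloor\geqslant\phi(x)$, and the lower coefficients are reached through the logarithmic-derivative recursion \eqref{5.12}, which costs one power of $d_n$ per step and uses $q>\sqrt{pn}>p+s>i$ to absorb the factor $1/i$. The two arguments encode the same arithmetic kernel (your residue-count inequality is the combinatorial form of the paper's floor-function inequality), but your Leibniz-expansion approach avoids the induction and the division by $i$ (so it never needs $q>p+s$) and makes the local source of the divisibility---extra roots of the numerator in prescribed residue classes---explicit, whereas the paper's recursion spares one from analyzing derivatives of the $F_{j/p}$ blocks at all, at the price of the explicit base-case computation.
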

	
\begin{proof}
Replacing $i$ by $p-1+s-i$, we need to prove that
\begin{equation*}
\Phi_{n}^{-1} d_n^{i} r_{p-1+s-i,k} \in \mathbb{Z}, \quad i \in \{0,1,\ldots,p-2+s\}, ~k \in \{1,2,\ldots,n\}. 
\end{equation*}
By Lemma \ref{lemma_r_ik_weak}, we have 
\[ 
d_n^{i} r_{p-1+s-i,k} \in \mathbb{Z}. 
\]
It remains to prove that, for any prime $q$ such that $\sqrt{pn} < q \leqslant n$ we have
\begin{equation}
\label{eqn:lemma_r_ik}
v_q(r_{p-1+s-i,k}) \geqslant \phi\left( \frac{n}{q} \right) - i, \quad i \in \{0,1,\ldots,p-2+s\}.
\end{equation}
Fix such a prime $q$ and fix $k \in \{1,2,\ldots,n\}$. We prove \eqref{eqn:lemma_r_ik} by induction on $i$.
		
We first consider the case $i=0$ for \eqref{eqn:lemma_r_ik}. 
Note that we can rewrite $R_n(t)$ (see Definition \ref{def:R_n(t)}) as
\begin{align}
R_n(t) = n!^s \cdot t^{M_0} \cdot \frac{p^{-1} \cdot (pt)_{pn+1}}{(t)_{n+1}^{p+s}}. \label{rewrite_R_n(t)}
\end{align}
We have 
\begin{align*}
r_{p-1+s,k} &= R_n(t)(t+k)^{p-1+s} |_{t=-k} \\
&= \pm k^{M_0} \cdot \binom{n}{k}^s \cdot  \frac{(pk)!(pn-pk)!}{k!^{p}(n-k)!^p}. 
\end{align*}
Since $q > \sqrt{pn}$, we have 
\begin{align*}
v_q(r_{p-1+s,k}) &\geqslant v_q\left( \frac{(pk)!(pn-pk)!}{k!^{p}(n-k)!^p} \right) \\
&= \left\lfloor \frac{pk}{q} \right\rfloor + \left\lfloor \frac{pn-pk}{q} \right\rfloor - p\left\lfloor \frac{k}{q} \right\rfloor - p \left\lfloor \frac{n-k}{q} \right\rfloor.
\end{align*}
It is elementary to check that, for any real numbers $x,y$, we have
\[ 
\left\lfloor py \right\rfloor + \left\lfloor px-py \right\rfloor - p\left\lfloor y \right\rfloor - p \left\lfloor x-y \right\rfloor \geqslant \phi(x), 
\]
where $\phi(x)$ is given by \eqref{def:phi}. 
(We refer the reader to \cite[Fig. 9 and Fig. 10, p. 529]{Zud2002} or \cite[Fig. 1, p. 24]{Lai2025+} for the method of estimating a finite sum of floor functions). 
Taking $x=n/q$ and $y=k/q$, we obtain
\[ 
v_q(r_{p-1+s,k}) \geqslant \phi\left( \frac{n}{q} \right).  
\]
The case $i=0$ of \eqref{eqn:lemma_r_ik} is proved. 
		
Suppose now $i \in \{1,2,\ldots,p-2+s\}$ and \eqref{eqn:lemma_r_ik} is proved for any smaller $i$. 
Define
\begin{align}
U(t) &:= \frac{\mathcal{D}_1\left( R_n(t)(t+k)^{p-1+s} \right)}{R_n(t)(t+k)^{p-1+s}} \notag \\
&= \frac{M_0}{t}+\sum_{\nu = 0 \atop \nu \neq pk}^{pn} \frac{1}{t + \frac{\nu}{p}} -(p+s)\sum_{\nu=0 \atop \nu \neq k}^{n}\frac{1}{t+\nu}. \label{def:U(t)}
\end{align}
Then the Leibniz rule implies
\begin{align}
r_{p-1+s-i,k} &= \mathcal{D}_{i}\left( R_n(t)(t+k)^{p-1+s} \right)|_{t=-k} \notag\\
&= \frac{1}{i} \mathcal{D}_{i-1}\left( U(t) \cdot R_n(t)(t+k)^{p-1+s} \right)|_{t=-k} \notag\\
&= \frac{1}{i} \sum_{m = 0}^{i-1} \mathcal{D}_{m}\left(U(t)\right) \mathcal{D}_{i-1-m}\left( R_n(t)(t+k)^{p-1+s} \right)|_{t=-k} \notag\\
&= \frac{1}{i} \sum_{m = 0}^{i-1} r_{p-1+s-(i-1-m),k} \mathcal{D}_{m}\left(U(t)\right)|_{t=-k}. \label{5.12}
\end{align}
By \eqref{def:U(t)}, we have
\begin{align*}
&\mathcal{D}_{m}\left(U(t)\right)|_{t=-k} \\
&= (-1)^{m}\left( \frac{M_0}{(-k)^{m+1}} + \sum_{\nu = 0 \atop \nu \neq pk}^{pn} \frac{1}{\left(-k + \frac{\nu}{p}\right)^{m+1}} -(p+s)\sum_{\nu=0 \atop \nu \neq k}^{n}\frac{1}{(-k+\nu)^{m+1}} \right). 
\end{align*}
Since $q > \sqrt{pn}$, each term on the right-hand side above has $q$-adic order at least $-(m+1)$. 
Thus, we have
\begin{equation}
\label{5.13}
v_q\left( \mathcal{D}_{m}\left(U(t)\right)|_{t=-k} \right) \geqslant -m-1.
\end{equation}
On the other hand, by the induction hypothesis, we have
\begin{equation}
\label{5.14}
v_q(r_{p-1+s-(i-1-m),k}) \geqslant \phi\left( \frac{n}{q} \right) - (i-1-m).
\end{equation}
Noting that $q > \sqrt{pn} > p+s > i$ (since $n>(p+s)^4$), we conclude from \eqref{5.12}, \eqref{5.13} and \eqref{5.14} that
\[ 
v_q(r_{p-1+s-i,k}) \geqslant \phi\left( \frac{n}{q} \right) - i, 
\]
which completes the induction procedure. 
The proof of Lemma \ref{lemma_r_ik} is complete.
\end{proof}
	
\bigskip
	
Next, we bound the denominators of $\rho_i$ ($i \in \{0\}\cup\{2,3,\ldots,p-1+s\}$), which are the coefficients of the linear form $S_n$ (see Lemma \ref{lemma_linear_form_S_n}). 
	
\begin{lemma}
\label{lemma_rho_i}
For any $i \in \{2,3,\ldots,p-1+s\}$, we have
\[ 
\Phi_{n}^{-1} d_n^{p-1+s-i} \rho_{i} \in \mathbb{Z}. 
\]
\end{lemma}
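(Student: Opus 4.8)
The plan is to deduce the statement immediately from the per-coefficient bound already established in Lemma \ref{lemma_r_ik}. By the definition \eqref{definition_rho_i}, the coefficient $\rho_i$ is simply the finite sum $\rho_i = \sum_{k=1}^{n} r_{i,k}$ of $n$ rational numbers. Lemma \ref{lemma_r_ik} asserts that, for the same exponent $p-1+s-i$, we have $\Phi_n^{-1} d_n^{p-1+s-i} r_{i,k} \in \mathbb{Z}$ for every $k \in \{1,2,\ldots,n\}$ (and every admissible $i$). So I would multiply the defining relation by $\Phi_n^{-1} d_n^{p-1+s-i}$ and use that $\mathbb{Z}$ is closed under finite sums:
\[
\Phi_n^{-1} d_n^{p-1+s-i} \rho_i \;=\; \sum_{k=1}^{n} \Phi_n^{-1} d_n^{p-1+s-i} r_{i,k} \;\in\; \mathbb{Z},
\]
which is exactly the claim.

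There is no real obstacle here. All of the arithmetic substance has already been done upstream: the factor $d_n^{p-1+s-i}$ arises from the Leibniz-rule computation in Lemma \ref{lemma_r_ik_weak}, and the extra common divisor $\Phi_n$ comes from the prime-by-prime $q$-adic valuation estimates and the induction on $i$ carried out in the proof of Lemma \ref{lemma_r_ik}. The only thing to verify at this stage is that integrality after clearing denominators is preserved under passing from the individual partial-fraction coefficients $r_{i,k}$ to their sum $\rho_i$, which is immediate. (I would also note in passing that $\rho_i$ here is defined only for $i \geqslant 2$, while the remaining coefficient $\rho_0$ of the linear form $S_n$ requires a separate argument.)
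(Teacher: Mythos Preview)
Your proof is correct and follows exactly the same approach as the paper: the authors' proof is a one-line reference to Eq.~\eqref{definition_rho_i} and Lemma~\ref{lemma_r_ik}, which is precisely the argument you spell out.
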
 
	
\begin{proof}
It follows immediately from Eq. \eqref{definition_rho_i} and Lemma \ref{lemma_r_ik}.
\end{proof}
	
\medskip
	
It is more involved to bound the denominator of $\rho_0$. 
Recall that $\rho_{0} = \sum_{j=1}^{p-1} \rho_{0,j/p}$ (see \eqref{definition_rho_0}) and $\rho_{0,j/p}$ is defined by \eqref{definition_rho_0theta}. 
We first bound the denominators of $\rho_{0,j/p}$. 
	
\begin{lemma}
\label{lemma_rho_0theta_general}
For any $j \in \{1,2,\ldots,p-1\}$, we have
\[ 
\Phi_{n}^{-1} d_n^{p-1+s} \rho_{0,j/p} \in \mathbb{Z}. 
\]
\end{lemma}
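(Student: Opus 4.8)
The plan is to reorganise $\rho_{0,j/p}$ in terms of the principal parts of $R_n$ and then to verify the claimed divisibility one prime at a time. For $k\in\{1,\dots,n\}$ let $R_n^{(k)}(t):=\sum_{i=1}^{p-1+s}\frac{r_{i,k}}{(t+k)^i}$ denote the principal part of $R_n$ at $t=-k$; since $\deg R_n\leqslant-2$ and $R_n$ is regular at $0$ we have $R_n=\sum_{k=1}^n R_n^{(k)}$. Reading off \eqref{definition_rho_0theta} and using $R_n^{(k)}\big(\nu+\tfrac jp-k\big)=\sum_{i=1}^{p-1+s}\frac{r_{i,k}}{(\nu+j/p)^i}$, one gets
\[
\rho_{0,j/p}=-\sum_{k=1}^{n}\sum_{\nu=0}^{k-1}R_n^{(k)}\!\left(\nu+\tfrac jp-k\right).
\]
The crucial observation is a vanishing property: for all integers $0\leqslant\nu<k\leqslant n$ and all $j\in\{1,\dots,p-1\}$ we have $R_n\big(\nu+\tfrac jp-k\big)=0$. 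Indeed, by the form \eqref{rewrite_R_n(t)} of $R_n$, at $t=\nu+\tfrac jp-k$ the quantity $pt=(p\nu+j)-pk$ is an integer lying in $\{-pn+1,\dots,-1\}$ and not divisible by $p$; hence it is a simple zero of the factor $(pt)_{pn+1}$, it is not a zero of $t^{M_0}$, and it is not a pole of $(t)_{n+1}^{p+s}$ (whose poles occur at integer values of $t$). Therefore $R_n^{(k)}\big(\nu+\tfrac jp-k\big)=-\sum_{k'\neq k}R_n^{(k')}\big(\nu+\tfrac jp-k\big)$ whenever $0\leqslant\nu<k\leqslant n$.

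Now fix a prime $\ell$; it suffices to show $v_\ell\big(\Phi_n^{-1}d_n^{p-1+s}\rho_{0,j/p}\big)\geqslant 0$. Using $\frac1{(\nu+j/p)^i}=\frac{p^i}{(p\nu+j)^i}$ together with Lemma \ref{lemma_r_ik} (which gives $v_\ell(r_{i,k})\geqslant v_\ell(\Phi_n)-(p-1+s-i)v_\ell(d_n)$), one obtains for each term the bound
\[
v_\ell\!\left(\Phi_n^{-1}d_n^{p-1+s}\cdot\frac{r_{i,k}}{(\nu+j/p)^i}\right)\;\geqslant\; i\big(v_\ell(d_n)+v_\ell(p)-v_\ell(p\nu+j)\big).
\]
This is $\geqslant 0$ as soon as $\ell=p$ (then $v_\ell(p\nu+j)=0$ since $p\nmid j$) or $v_\ell(p\nu+j)\leqslant v_\ell(d_n)$. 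Since $\ell^{\,v_\ell(d_n)+1}>n$, for $\ell\neq p$ there is at most one index $\nu_0\in\{0,\dots,n-1\}$ with $v_\ell(p\nu_0+j)>v_\ell(d_n)$. For every $\nu\neq\nu_0$ the above bound already yields the desired $\ell$-integrality (in particular, if $\sqrt{pn}<\ell\leqslant n$ then $\ell^2>pn>p\nu+j$, so no exceptional index exists and the factor $\Phi_n$ is exactly what is needed here). For the exceptional index $\nu_0$ (if it exists, so $\ell\neq p$) we instead use the vanishing property: each summand $-R_n^{(k)}\big(\nu_0+\tfrac jp-k\big)$ with $k>\nu_0$ is replaced by
\[
\sum_{k'\neq k}R_n^{(k')}\!\left(\nu_0+\tfrac jp-k\right)=\sum_{k'\neq k}\sum_{i=1}^{p-1+s}\frac{r_{i,k'}\,p^{i}}{\big((p\nu_0+j)+p(k'-k)\big)^{i}}.
\]
Because $\ell\neq p$, $v_\ell(p\nu_0+j)>v_\ell(d_n)\geqslant v_\ell(k'-k)$ and $1\leqslant|k'-k|\leqslant n-1$, we have $v_\ell\big((p\nu_0+j)+p(k'-k)\big)=v_\ell(k'-k)\leqslant v_\ell(d_n)$, so the same bound applies to the rewritten terms and gives $\ell$-integrality of the exceptional part as well. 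Combining all primes $\ell$ yields $\Phi_n^{-1}d_n^{p-1+s}\rho_{0,j/p}\in\mathbb Z$.

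The only place where the crude term-by-term estimate fails is at the exceptional index $\nu_0$: there the denominator $p\nu_0+j\leqslant pn-1$ may carry a prime power strictly between $n$ and $pn$, which $d_n^{p-1+s}$ cannot clear. Overcoming this is the heart of the proof, and it rests entirely on the vanishing of $R_n$ at the points $\nu+\tfrac jp-k$ (a consequence of the numerator factor $(pt)_{pn+1}$), which lets one trade the offending factor of $p\nu_0+j$ for the innocuous shift $p(k'-k)$. I expect the verification of this vanishing and the valuation bookkeeping in the rewritten sum to be the main technical points; the remaining estimates are routine applications of Lemma \ref{lemma_r_ik}.
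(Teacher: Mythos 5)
Your proposal is correct and rests on exactly the same two ingredients as the paper's proof: the vanishing of $R_n$ at the points $\nu+\tfrac{j}{p}-k$ (used to trade the dangerous denominator for shifts $k'-k$ of absolute value at most $n$, whose $q$-valuation is at most $v_q(d_n)$) together with Lemma \ref{lemma_r_ik}. The paper organizes this as a proof by contradiction showing that each inner sum over $i$ for a fixed pair $(k,\nu)$ is already integral, whereas you run a direct prime-by-prime valuation count with a single exceptional index $\nu_0$; this is only a difference in packaging, not in substance.
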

	
\begin{proof}
By \eqref{definition_rho_0theta}, we have
\[ 	
\Phi_{n}^{-1} d_n^{p-1+s}\rho_{0,j/p} =-\sum_{k=1}^n\sum_{\nu=0}^{k-1} \left(\sum_{i=1}^{p-1+s}\frac{\Phi_{n}^{-1} d_n^{p-1+s}r_{i,k}}{\left(\nu+\frac{j}{p}\right)^{i}} \right). 
\]
It suffices to prove that for any pair of integers $(k,\nu)$ such that $0 \leqslant \nu < k \leqslant n$, the most inner sum above is an integer. 
		
We argue by contradiction. 
Suppose that there is a pair $(k_0,\nu_0)$ with $0 \leqslant \nu_0 < k_0 \leqslant n$ such that 
\[ 
\sum_{i=1}^{p-1+s}\frac{\Phi_{n}^{-1} d_n^{p-1+s}r_{i,k_0}}{\left(\nu_0+\frac{j}{p}\right)^{i}} \not\in \mathbb{Z}. 
\]
Note that the linear polynomial $t+k_0-\nu_0-1+(p-j)/p$ is a factor of $(t+(p-j)/p)_n$, and hence a factor of $R_n(t)$. 
Thus,
\[ 
R_n\left( -k_0+\nu_0+1 - \frac{p-j}{p} \right) = 0. 
\]
It follows from \eqref{def:r_ik} that
\[ 
-\sum_{i=1}^{p-1+s}\sum_{k=1\atop k \neq k_0 }^{n} \frac{\Phi_{n}^{-1} d_n^{p-1+s}r_{i,k}}{\left( -k_0+\nu_0+k+\frac{j}{p} \right)^i}= \sum_{i=1}^{p-1+s}\frac{\Phi_{n}^{-1} d_n^{p-1+s}r_{i,k_0}}{\left(\nu_0+\frac{j}{p}\right)^{i}} \not\in \mathbb{Z}. 
\]
Therefore, there exist a prime number $q$ and integers $i_0,i_1,k_1$ satisfying $i_0,i_1 \in \{1,2,\ldots,p-1+s\}$, $k_1 \in \{1,2,\ldots,n\}$, $k_1 \neq k_0$, and
\[ 
v_q\left( \frac{\Phi_{n}^{-1} d_n^{p-1+s}r_{i_1,k_1}}{\left( -k_0+\nu_0+k_1+\frac{j}{p} \right)^{i_1}} \right) < 0, \quad v_q\left( \frac{\Phi_{n}^{-1} d_n^{p-1+s}r_{i_0,k_0}}{\left(\nu_0+\frac{j}{p}\right)^{i_0}} \right) < 0. 
\]
On the other hand, by Lemma \ref{lemma_r_ik} we have
\[ 
v_q\left( \Phi_{n}^{-1} d_n^{p-1+s-i_1}r_{i_1,k_1} \right) \geqslant 0, \quad v_q\left( \Phi_{n}^{-1} d_n^{p-1+s-i_0}r_{i_0,k_0} \right) \geqslant 0. 
\]
It follows that
\[ 
v_q\left( -k_0+\nu_0+k_1+\frac{j}{p} \right) > v_q(d_n), \quad v_q\left(\nu_0+\frac{j}{p}\right) > v_q(d_n), 
\]
and hence
\[ 
v_q(k_1-k_0) > v_q(d_n). 
\]
But this contradicts $0 < |k_1-k_0| \leqslant n$. 
The proof of Lemma \ref{lemma_rho_0theta_general} is complete.
\end{proof}
	
\medskip

\begin{lemma}
\label{lemma_integrality_rho_0}
We have
\[ 
\Phi_{n}^{-1} d_n^{p-1+s} \rho_{0} \in \mathbb{Z} 
\]
\end{lemma}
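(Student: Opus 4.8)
The plan is to obtain Lemma~\ref{lemma_integrality_rho_0} as an immediate corollary of Lemma~\ref{lemma_rho_0theta_general}. By definition \eqref{definition_rho_0} we have $\rho_0 = \sum_{j=1}^{p-1} \rho_{0,j/p}$, a sum of only $p-1$ terms, and Lemma~\ref{lemma_rho_0theta_general} tells us that each summand satisfies $\Phi_n^{-1} d_n^{p-1+s} \rho_{0,j/p} \in \mathbb{Z}$. Hence
\[
\Phi_n^{-1} d_n^{p-1+s} \rho_0 = \sum_{j=1}^{p-1} \Phi_n^{-1} d_n^{p-1+s} \rho_{0,j/p}
\]
is a sum of $p-1$ integers, so it lies in $\mathbb{Z}$. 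That is the whole argument; I do not expect any genuine obstacle in this step.

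It is worth emphasizing where the substance actually lies, so that the brevity of the deduction is not misleading. The nontrivial inputs are Lemma~\ref{lemma_r_ik} (the additional $\Phi_n$-divisibility of the integers $d_n^{p-1+s-i} r_{i,k}$, established by the inductive $q$-adic estimate for primes $q$ with $\sqrt{pn} < q \leqslant n$) and the contradiction argument inside the proof of Lemma~\ref{lemma_rho_0theta_general}: if some inner sum $\sum_i \Phi_n^{-1} d_n^{p-1+s} r_{i,k}/(\nu+j/p)^i$ failed to be integral, one would be forced to conclude $v_q(k_1-k_0) > v_q(d_n)$ for two distinct indices $k_0,k_1 \in \{1,\ldots,n\}$, which is impossible since $0 < |k_1-k_0| \leqslant n$. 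Once those facts are available, the passage from the $\rho_{0,j/p}$ to $\rho_0$ costs nothing.

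For completeness I would remark that one could instead prove Lemma~\ref{lemma_integrality_rho_0} directly, repeating the contradiction argument of Lemma~\ref{lemma_rho_0theta_general} verbatim for the expanded double sum $\rho_0 = -\sum_{i,k} r_{i,k} \sum_{j=1}^{p-1}\sum_{\nu=0}^{k-1}(\nu+j/p)^{-i}$; this is strictly more work and yields nothing sharper than the bound $\Phi_n^{-1} d_n^{p-1+s}$, so the one-line deduction above is clearly preferable. Combined with Lemma~\ref{lemma_rho_i}, the present lemma shows that $\Phi_n^{-1} d_n^{p-1+s-i}$ (with the convention $i=0$ for $\rho_0$) clears the denominator of every coefficient of the linear form $S_n$ from Lemma~\ref{lemma_linear_form_S_n}, which is exactly the arithmetic information required for the $p$-adic and Archimedean estimates in the subsequent sections.
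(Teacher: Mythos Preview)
Your proof is correct and matches the paper's own argument exactly: the paper's proof is the single sentence ``It follows immediately from Eq.~\eqref{definition_rho_0} and Lemma~\ref{lemma_rho_0theta_general},'' which is precisely your deduction that $\Phi_n^{-1} d_n^{p-1+s}\rho_0$ is a finite sum of the integers $\Phi_n^{-1} d_n^{p-1+s}\rho_{0,j/p}$. Your additional commentary on where the real work lies is accurate but unnecessary for this step.
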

	
\begin{proof}
It follows immediately from Eq. \eqref{definition_rho_0} and Lemma \ref{lemma_rho_0theta_general}.
\end{proof}

\section{$p$-adic norm}
\label{sec:p-adic_norm}

In this section, we bound the $p$-adic norm of the linear forms constructed in Section \ref{sec:LinearForms}. 
	
\begin{lemma}
\label{lemma_u_k}
For any $j \in \{1,2,\ldots,p-1\}$, we have $R_n(t+j/p) \in C^\dagger(\mathbb{Z}_p,\mathbb{Q}_p)$ and the following estimate for the $p$-adic order of $\mathcal{L}_1(R_n(t+j/p))$: 
\[
v_p\left( \mathcal{L}_1\left( R_n\left( t+\frac{j}{p} \right) \right) \right) \geqslant (p+1+s)(n+1) + s \cdot v_p(n!) -M_0 - 3 - \left\lfloor \frac{\log (n+1)}{\log p} \right\rfloor.
\]
\end{lemma}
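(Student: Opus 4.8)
First I would dispose of the overconvergence, which is the easy half. By Definition~\ref{def:R_n(t)}, since $M_0>p-1+s$ the numerator factor $t^{M_0}$ cancels the pole of $R_n$ at $t=0$, so all poles of $R_n(t)$ lie in $\{-1,-2,\ldots,-n\}$; hence the poles of $t\mapsto R_n(t+\tfrac{j}{p})$ lie in $\{-1-\tfrac{j}{p},\ldots,-n-\tfrac{j}{p}\}$, each of which has absolute value $p>1$. Thus $R_n(t+\tfrac{j}{p})$ is $\mathbb{Q}_p$-analytic on the disc $\{|t|_p<p\}$ and in particular lies in $C^\dagger(\mathbb{Z}_p,\mathbb{Q}_p)$; alternatively one expands each summand of the partial fraction decomposition~\eqref{def:r_ik} exactly as in the proof of Lemma~\ref{lemma_primitive}.

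For the valuation estimate the plan is to write $R_n(t+\tfrac{j}{p})=\sum_{m\geqslant 0}a_m t^m$ and to apply the definition of $\mathcal{L}_1$ termwise: $\mathcal{L}_1\!\bigl(R_n(t+\tfrac{j}{p})\bigr)=\sum_{m\geqslant 0}a_m\frac{B_{m+1}}{m+1}$. Since $B_{m+1}=0$ for every even $m\geqslant 2$, only the terms with $m=0$ or $m$ odd survive, and for those the von Staudt--Clausen theorem gives $v_p(B_{m+1})\geqslant -1$ while $v_p(m+1)\leqslant\lfloor\log(m+1)/\log p\rfloor$, so that
\[
v_p\!\left(a_m\frac{B_{m+1}}{m+1}\right)\geqslant v_p(a_m)-1-\left\lfloor\frac{\log(m+1)}{\log p}\right\rfloor
\]
(with no loss of the $-1$ when $m=0$, since $B_1=-\tfrac12$ is a $p$-adic unit). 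As the valuation of a convergent $p$-adic series is at least the infimum of the valuations of its terms, it now suffices to bound $v_p(a_m)$ well enough that every surviving term already exceeds the asserted lower bound.

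To bound $v_p(a_m)$ I would substitute $t\mapsto t+\tfrac{j}{p}$ into the product formula of Definition~\ref{def:R_n(t)}, getting
\[
R_n\!\left(t+\tfrac{j}{p}\right)=p^{pn}\,n!^{s}\left(t+\tfrac{j}{p}\right)^{M_0}\frac{\displaystyle\prod_{j'=1}^{p-1}\bigl(t+\tfrac{j+j'}{p}\bigr)_n}{\bigl(t+\tfrac{j}{p}\bigr)_{n+1}^{p-1+s}}.
\]
The crucial observation is that among $j'=1,\ldots,p-1$ exactly one value, namely $j'=p-j$, makes $j+j'$ a multiple of $p$: for that value the Pochhammer factor becomes $(t+1)_n=(t+1)(t+2)\cdots(t+n)$, a degree-$n$ polynomial with \emph{integer} coefficients whose constant term is $n!$, whereas for each of the remaining $p-2$ values $v_p(\tfrac{j+j'}{p})=-1$. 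Reading off the valuation of the $t^m$-coefficient of each factor at $t=0$ --- it is $\geqslant m-M_0$ for $(t+\tfrac{j}{p})^{M_0}$, $\geqslant m-(p-2)n$ for $\prod_{j'\neq p-j}\bigl(t+\tfrac{j+j'}{p}\bigr)_n$, and $\geqslant m+(p-1+s)(n+1)$ for $\bigl(t+\tfrac{j}{p}\bigr)_{n+1}^{-(p-1+s)}$ --- and multiplying these power series, using that the integer-coefficient factor $(t+1)_n$ absorbs the whole degree when $m\leqslant n$, I expect to obtain $v_p(a_m)\geqslant(p+1+s)(n+1)+s\,v_p(n!)-M_0-2$ for $0\leqslant m\leqslant n$ (with an extra $v_p(n!)$ to spare when $m=0$), and $v_p(a_m)\geqslant(p+s)(n+1)+s\,v_p(n!)+m-M_0-1$ for $m>n$.

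Feeding these into the inequality above completes the proof. The $m=0$ term is fine because its extra $v_p(n!)$ dominates $1+\lfloor\log(n+1)/\log p\rfloor$; for $1\leqslant m\leqslant n$ it is immediate since $\lfloor\log(m+1)/\log p\rfloor\leqslant\lfloor\log(n+1)/\log p\rfloor$; and for $m>n$ it reduces to $\lfloor\log(m+1)/\log p\rfloor-\lfloor\log(n+1)/\log p\rfloor\leqslant m-n$, which holds because $x\mapsto\lfloor\log(x+1)/\log p\rfloor$ grows by at most $1$ at each integer step. I expect the main obstacle to be precisely this bookkeeping of the nested Taylor-coefficient valuations --- in particular spotting the power of $p$ that the shift gains from turning one Pochhammer factor into $(t+1)_n$ --- and then confirming that the accumulated constants still beat the target bound uniformly in $m$, including on the infinite tail $m>n$.
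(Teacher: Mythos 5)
Your proposal is correct and follows essentially the same route as the paper: after substituting $t\mapsto t+\tfrac{j}{p}$ one isolates the single Pochhammer factor $(t+1)_n$ (an integer polynomial of degree $n$), extracts the total power $p^{(p+1+s)(n+1)-M_0-2}\,n!^s$ from the remaining factors, which lie in $\mathbb{Z}_p\llbracket pt\rrbracket$, thereby getting $v_p(a_m)\geqslant (p+1+s)(n+1)+s\,v_p(n!)-M_0-2+\max\{0,m-n\}$, and then concludes from the definition of $\mathcal{L}_1$ together with von Staudt--Clausen and $v_p(m+1)\leqslant\lfloor\log(m+1)/\log p\rfloor$, exactly as in the paper's proof. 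The only (harmless) deviation is the overconvergence step, which the paper obtains by citing Lemma~\ref{lemma_primitive} and the stability of $C^\dagger(\mathbb{Z}_p,\mathbb{Q}_p)$ under differentiation, whereas you argue directly from the location of the poles at distance $p$ from $\mathbb{Z}_p$.
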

	
\begin{proof}
By Lemma \ref{lemma_primitive}, we have $\widetilde{R}_n(t+j/p) \in C^\dagger(\mathbb{Z}_p,\mathbb{Q}_p)$. 
It follows that $R_n(t+j/p) \in C^\dagger(\mathbb{Z}_p,\mathbb{Q}_p)$ because $C^\dagger(\mathbb{Z}_p,\mathbb{Q}_p)$ is closed under taking derivative.
		
By \eqref{def:R_n(t)}, we can rewrite $R_n(t)$ as
\[ 
R_n(t) = p^{pn} \cdot n!^s \cdot t^{M_0} \cdot \frac{\prod_{\nu=1}^{p-1} \prod_{m=0}^{n-1} \left( t+m+\frac{\nu}{p} \right)}{(t)_{n+1}^{p-1+s}}, 
\]
Thus, we have
\begin{align}
R_n\left(t+\frac{j}{p}\right) = &~p^{(p+1+s)(n+1)-M_0-2} \cdot n!^s \label{constant_term}\\
&\times  \prod_{m =0}^{n-1} (t+m+1) \label{product_term_1} \\
&\times \left( pt+j \right)^{M_0} \cdot \prod_{1 \leqslant \nu \leqslant p-1\atop \nu \neq p-j}\prod_{m =0}^{n-1} (pt+pm+j+\nu) \label{product_term_2}\\
&\times \prod_{m=0}^{n}\left( pt + pm+ j \right)^{-(p-1+s)}. \label{product_term_3}
\end{align}
Clearly, the factors in the lines \eqref{product_term_2} and \eqref{product_term_3} belong to $\mathbb{Z}_p\llbracket pt \rrbracket$. 
Thus, we have
\begin{equation}\label{expression_for_R_n(t+theta)}
R_n\left(t+\frac{j}{p}\right) = p^{(p+1+s)(n+1)-M_0-2} \cdot n!^s \cdot P(t) \cdot Q(t), 
\end{equation}
where $P(t) \in \mathbb{Z}[t]$ is the polynomial in the line \eqref{product_term_1}, and $Q(t) \in \mathbb{Z}_p\llbracket pt \rrbracket$. 
Let us write
\[ 
R_n\left(t+\frac{j}{p}\right) = \sum_{k=0}^{\infty} u_k t^{k}. 
\]
Noting that $\deg P = n$, we deduce from \eqref{expression_for_R_n(t+theta)} that $u_k \in \mathbb{Z}_p$ and 
\begin{equation*}
v_p(u_k) \geqslant (p+1+s)(n+1) + s\cdot v_p(n!) -M_0 -2 + \max\{ 0,k-n \} 
\end{equation*}
for any $k \geqslant 0$. 
Therefore, 
\begin{align*}
&v_p\left(\mathcal{L}_1\left( R_n\left( t+\frac{j}{p} \right) \right) \right) = v_p\left( \sum_{k=0}^{\infty} u_k \cdot \frac{B_{k+1}}{k+1} \right) \\
\geqslant& (p+1+s)(n+1) + s\cdot v_p(n!) -M_0 -2 + \min_{k \geqslant 0} \left( \max\{0,k-n\} - v_p(k+1) -1 \right) \\
\geqslant&  (p+1+s)(n+1) + s\cdot v_p(n!) -M_0 -3 - \left\lfloor \frac{\log (n+1)}{\log p} \right\rfloor.
\end{align*}
The proof of Lemma \ref{lemma_u_k} is complete.
\end{proof}

\bigskip

Recall that $N_0=v_p(p-1+s)$ and $M_0=p^{2+N_0}\cdot s-1$. 
Note that there are infinitely many positive integers $N$ with $p^N\equiv p^{2+N_0} \pmod{p-1+s}$. 
For any such $N$ we have
\[
p^N(p-1)+M_0+1\equiv 0 \pmod{p-1+s}
\]
and
\begin{equation}
\label{def:n(N)}
n(N):=\frac{p^N(p-1)+M_0+1}{p-1+s}-1
\end{equation}
is a well-defined positive integer. 
Furthermore, note that $n(N)\to \infty$ as $N\to\infty $. 
Thus, the following set $I$ is an unbounded subset of $\mathbb{N}$:
\begin{equation}
\label{def:I}
I := \left\{  n \in \mathbb{N} ~\mid~ n>(p+s)^4 \text{~and~} n = n(N) \text{~for some~} N \in \mathbb{N}\right\}.
\end{equation}
The key point of the above definitions is that
\[
(n(N)+1)(p-1+s)-M_0=p^N(p-1)+1.
\]

\begin{lemma}
\label{lem:vpRntilde}
For any $n = n(N) \in I$, we have
\[
v_p\left( \sum_{j=1}^{p-1} \widetilde{R}_{n}\left(\frac{j}{p}\right) \right)=(p+s)(n+1)+s\cdot v_p(n!)-M_0-2-N.
\]
\end{lemma}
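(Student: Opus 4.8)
The plan is to evaluate $\sum_{j=1}^{p-1}\widetilde R_n(j/p)$ and then read off its $p$-adic order. Specializing \eqref{definition_widetildeR_n} at $t=j/p$ and summing over $j$ gives
\[
\sum_{j=1}^{p-1}\widetilde R_n\!\left(\frac jp\right)=\sum_{k=1}^{n}r_{1,k}\sum_{j=1}^{p-1}\log_p\!\left\langle k+\frac jp\right\rangle+\sum_{i=2}^{p-1+s}\frac{1}{1-i}\sum_{k=1}^{n}r_{i,k}\sum_{j=1}^{p-1}\Bigl(k+\frac jp\Bigr)^{1-i}.
\]
For the logarithmic part I would use $\langle k+j/p\rangle=\langle pk+j\rangle=(pk+j)/\omega(j)$ together with $\prod_{j=1}^{p-1}\omega(j)=\omega((p-1)!)=\omega(-1)=-1$ (Wilson's theorem), which turns $\sum_{j=1}^{p-1}\log_p\langle k+j/p\rangle$ into $\log_p\bigl(-\prod_{j=1}^{p-1}(pk+j)\bigr)\in p\mathbb Z_p$; for the remaining sums, write $(k+j/p)^{1-i}=p^{\,i-1}(pk+j)^{1-i}$ and handle $\sum_{j=1}^{p-1}(pk+j)^{1-i}$ via the elementary congruences $\sum_{j=1}^{p-1}j^{-m}\equiv -1$ or $0\pmod p$ according as $(p-1)\mid m$ or not. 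Equivalently, combining the integral identities from the proof of Lemma \ref{lemma_linear_form_S_theta} with Lemma \ref{lem:comparison_Volkenborn_Bernoulli} applied to $f=\widetilde R_n(\cdot+j/p)$ (so $f'=R_n(\cdot+j/p)$ and $\int_{\mathbb Z_p}f=-S_{j/p}$) and summing over $j$ yields
\[
\sum_{j=1}^{p-1}\widetilde R_n\!\left(\frac jp\right)=-S_n-\sum_{j=1}^{p-1}\mathcal L_1\!\left(R_n\!\left(t+\frac jp\right)\right).
\]

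By Lemma \ref{lemma_u_k} the last sum has $p$-adic order $\geq (p+1+s)(n+1)+s\,v_p(n!)-M_0-3-\lfloor\log(n+1)/\log p\rfloor$, which for $n=n(N)\in I$ large is strictly bigger than the asserted value $(p+s)(n+1)+s\,v_p(n!)-M_0-2-N$; hence it suffices to show that $v_p(S_n)$ — or, what is the same, $v_p$ of the explicit expression for $\sum_j\widetilde R_n(j/p)$ displayed above — equals the asserted value. The positive part $(p+1+s)(n+1)+s\,v_p(n!)-M_0-2$ reflects that $R_n$ is $p$-adically very small near the points $j/p$ (indeed $v_p(R_n(j/p))=(p+1+s)(n+1)+(s+1)v_p(n!)-M_0-2$ by \eqref{constant_term}--\eqref{product_term_3}, the smallness coming from the factor $p^{pn}$ and the Pochhammer denominators of $R_n$); the term $-(n+1)$ comes from the $p^{\,i-1}$'s above together with the linear parts of the $\log_p$-expansions; and the final $-N$ is where the choice $n=n(N)$ is indispensable, through the identity $(n+1)(p-1+s)-M_0=p^N(p-1)+1$, i.e. $\deg R_n=(p-1)(n-p^N)-1$. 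This very negative degree, via the expansion of the partial-fraction decomposition \eqref{def:r_ik} at $t=\infty$, forces a long list of vanishing relations $\sum_{i+m=d}\binom{-i}{m}\sum_{k}r_{i,k}k^{m}=0$ among the coefficients, and these are exactly what make the cancellation in the displayed sum leave a single main term of order $(p+s)(n+1)+s\,v_p(n!)-M_0-2-N$. The integrality results of Section \ref{sec:Arithmetic}, especially Lemma \ref{lemma_r_ik}, are used throughout; note that $\Phi_n$ is prime to $p$ because $p<\sqrt{pn}$ for $n>(p+s)^4$, so it does not affect $p$-adic orders.

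The main obstacle, as usual in this Zudilin-type argument, is the cancellation bookkeeping in the last step — turning the lower bound $v_p\bigl(\sum_j\widetilde R_n(j/p)\bigr)\geq(p+s)(n+1)+s\,v_p(n!)-M_0-2-N$ into an equality. One has to track simultaneously the high-order derivatives defining the coefficients $r_{i,k}$ (via Lemmas \ref{lemma_G(t)} and \ref{lemma_F(t)}), the vanishing relations forced by $\deg R_n$ being very negative, and the $p$-divisibility recorded by $\Phi_n$ and $d_n$, and confirm that exactly one term survives with the predicted $p$-adic order and is not annihilated by the others.
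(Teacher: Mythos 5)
There is a genuine gap: your argument never actually establishes the stated equality (nor, in fact, the lower bound) along the route you choose. Working from the partial-fraction data $r_{i,k}$ is intrinsically delicate here because the individual coefficients are \emph{not} $p$-adically small --- e.g.\ $r_{p-1+s,k}=\pm k^{M_0}\binom{n}{k}^s\frac{(pk)!(pn-pk)!}{k!^p(n-k)!^p}$ has $p$-adic valuation only $O(\log n)$, while the target valuation grows like $(p+ps/(p-1))n$ --- so everything hinges on an enormous cancellation across $i,k$ and $j$. Your per-term observations (Wilson's theorem making the log-part land in $p\mathbb{Z}_p$, the congruences for $\sum_j j^{-m}$) give gains of size $O(1)$ per term and cannot reach the target, and the place where the real work would happen is exactly the step you defer: you state that the vanishing relations forced by $\deg R_n=(p-1)(n-p^N)-1$ ``are exactly what make the cancellation \dots leave a single main term'' and then concede in your last paragraph that the bookkeeping confirming that a unique term survives with the predicted order remains to be done. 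That confirmation \emph{is} the lemma; without it nothing is proved, and in particular the crucial non-vanishing $S_n\neq 0$ is not obtained. Your alternative reduction ``it suffices to show that $v_p(S_n)$ equals the asserted value'' is also circular in effect: in this construction $v_p(S_n)$ is only accessible \emph{through} the quantity $\sum_j\widetilde R_n(j/p)$ (the $\mathcal{L}_1$-part being negligible by Lemma \ref{lemma_u_k}), so you cannot use it as an input.

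For comparison, the actual proof bypasses the partial fractions entirely. One expands $R_n(t/p)$ as a Laurent series at $t=\infty$, $R_n(t/p)=p^{(p+s)(n+1)-M_0-1}n!^s\sum_k h_k t^{-k}$, where the polynomial part satisfies $P(t)\equiv(t^{p-1}-1)^n\pmod{p}$ and the remaining factor lies in $\mathbb{Z}_p\llbracket p\,t^{-1}\rrbracket$; antidifferentiating term by term gives $\widetilde R_n(t/p)$ with coefficients $h_k/(1-k)$. The identity $(n+1)(p-1+s)-M_0=p^N(p-1)+1$ (your observation about $\deg R_n$ is the same fact) then shows that $k=p^N(p-1)+1$ is the \emph{unique} index with $v_p\bigl(h_k/(1-k)\bigr)=-N$, since minimality forces $k\equiv 1\pmod{p-1}$ (from the mod-$p$ congruence for $P$) and $k\equiv 1\pmod{p^N}$ (to make $v_p(1-k)=N$), while all other indices give strictly larger valuation. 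Finally, summing over $j=1,\dots,p-1$ and using Fermat's little theorem, $\sum_j j^{-p^N(p-1)}\equiv p-1\equiv -1\pmod p$, so the dominant term is a unit times $p^{(p+s)(n+1)-M_0-2+s\,v_p(n!)-N}$, which gives the exact valuation. This is the concrete mechanism replacing your deferred ``cancellation bookkeeping''; to repair your write-up you would either have to carry out an equivalent analysis or switch to this Laurent-expansion argument.
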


\begin{proof}
We have
\begin{align}
R_n\left(\frac{t}{p}\right) = &~p^{(p+s)(n+1)-M_0-1} n!^s \cdot \frac{1}{t^{(p-1+s)(n+1)-M_0}} \label{C} \\
&\times \prod_{\nu=1}^{p-1} \prod_{m =0}^{n-1} \left( t+pm+\nu \right) \label{P(t)} \\
&\times \prod_{m=0}^{n} \left( 1+ \frac{pm}{t} \right)^{-(p-1+s)}. \label{Q(t)} 
\end{align}
Let us denote by $P(t)\in \mathbb{Z}_p[t]$ the polynomial in \eqref{P(t)} and by $Q(t)\in \mathbb{Z}_p\llbracket p\cdot t^{-1}\rrbracket$ the power series in \eqref{Q(t)}. 
Note that $P(t)$ is of degree $(p-1)n$ and
\begin{equation}\label{eq:congP}
P(t)\equiv (t^{p-1}-1)^n \pmod {p\mathbb{Z}_p[t]}.
\end{equation}
Let us write
\[
R_n\left(\frac{t}{p}\right)=p^{(p+s)(n+1)-M_0-1}n!^s \cdot \sum_{k=p-1+(n+1)s-M_0}^\infty h_k t^{-k}.
\]
Since $Q(t)\in \mathbb{Z}_p\llbracket p\cdot t^{-1}\rrbracket$, we get
\begin{equation}
\label{eq:vphk}
v_p(h_k) \geqslant \max\{0,k-(p-1+s)(n+1)+M_0\}.
\end{equation}
Equations \eqref{eq:congP}, \eqref{C} and $(n+1)(p-1+s)-M_0=p^N(p-1)+1$ imply that
\begin{align}
h_{k}&\equiv 0 \pmod {p}, \quad \text{ for $0\leqslant k\leqslant p^N(p-1)+1$ with $k\not\equiv 1\mod (p-1)$}, \label{eq:cong_hk1} \\
h_{ p^N(p-1)+1}&\equiv 1 \pmod {p}.  \label{eq:cong_hk2} 
\end{align}
We can now compute $\widetilde{R}_n(t/p)$:
\[
\widetilde{R}_n\left(\frac{t}{p}\right)=p^{(p+s)(n+1)-M_0-2}n!^s \cdot \sum_{k=p-1+(n+1)s-M_0}^\infty \frac{h_k}{1-k} t^{-(k-1)}.
\]
For $k>(n+1)(s+p-1)-M_0=p^N(p-1)+1$, we have by \eqref{eq:vphk}
\[
v_p\left( \frac{h_k}{1-k} \right)\geqslant 0.
\]
On the other hand, for $p-1+(n+1)s-M_0 \leqslant k \leqslant p^N(p-1)+1$, we have
\[
v_p\left(\frac{h_k}{1-k}\right) \geqslant -N.
\]
By \eqref{eq:cong_hk1} and \eqref{eq:cong_hk2}, we have equality if and only if $k=(p-1)p^N+1$. 
In fact, $k=(p-1)p^N+1$ is the only integer between $p-1+(n+1)s-M_0$ and $p^N(p-1)+1$ which is $\equiv 1\mod p^N$ and $\equiv 1 \mod (p-1)$. 
We can summarize the above discussion as follows
\begin{align*}
v_p\left(\frac{h_k}{1-k}\right)
\begin{cases}
>-N & \text{for}\ k\neq p^N(p-1)+1,\\
=-N & \text{for}\ k=p^N(p-1)+1.
\end{cases}
\end{align*}
For $m:=(p+s)(n+1)-M_0-2+s\cdot v_p(n!)-N$, we obtain
\begin{align*}
\sum_{j=1}^{p-1}\widetilde{R}_n\left(\frac{j}{p}\right)&\equiv p^{(p+s)(n+1)-M_0-2} n!^s\sum_{j=1}^{p-1} \frac{h_{p^N(p-1)+1}}{1-(p^N(p-1)+1)} j^{-p^N(p-1)} \mod p^{m+1}\\
&\equiv p^{(p+s)(n+1)-M_0-2-N} n!^s\sum_{j=1}^{p-1} h_{p^N(p-1)+1} j^{-p^N(p-1)} \mod p^{m+1}\\
&\equiv  p^{(n+1)(p+s)-2-M_0-N} n!^s\sum_{j=1}^{p-1} 1\equiv -  p^{(n+1)(p+s)-M_0-2-N} n!^s \mod p^{m+1}.
\end{align*}
This shows
\[
v_p\left( \sum_{j=1}^{p-1} \widetilde{R}_{n}\left(\frac{j}{p}\right) \right)=m=(p+s)(n+1)-M_0-2+s\cdot v_p(n!)-N,
\]
as desired.
\end{proof}

\bigskip

\begin{lemma}
\label{lemma_p_adic_norm_S_n}
For $n= n(N) \in I$, where $n(N)$ is defined by \eqref{def:n(N)} and $I$ is defined by \eqref{def:I}, we have
\[
v_p\left( S_n \right)=(p+s)(n+1)-M_0-2+s\cdot v_p(n!) -N.
\]
In particular, we have $S_n \neq 0$ and 
\[  
|S_n|_p = p^{-(p+ps/(p-1))n + o(n)}
\]
as $n \in I$ and $n \to \infty$.
\end{lemma}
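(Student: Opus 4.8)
The plan is to write $S_n$ as a sum of two contributions of very different $p$-adic size and to show that the one with larger valuation is negligible. First I would apply the comparison formula of Lemma~\ref{lem:comparison_Volkenborn_Bernoulli} with $f(t)=\widetilde R_n(t+j/p)$; this is legitimate because $f\in C^{\dagger}(\mathbb Z_p,\mathbb Q_p)$ and $f'(t)=R_n(t+j/p)$ by Lemma~\ref{lemma_primitive}. It yields
\[
\int_{\mathbb Z_p}\widetilde R_n\!\left(t+\tfrac{j}{p}\right)\mathrm{d}t=\mathcal L_1\!\left(R_n\!\left(t+\tfrac{j}{p}\right)\right)+\widetilde R_n\!\left(\tfrac{j}{p}\right),
\]
so that, by Definitions~\ref{def:S_theta} and~\ref{definition_linear_form_S_n},
\[
S_n=-\sum_{j=1}^{p-1}\mathcal L_1\!\left(R_n\!\left(t+\tfrac{j}{p}\right)\right)-\sum_{j=1}^{p-1}\widetilde R_n\!\left(\tfrac{j}{p}\right).
\]

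Next I would invoke the two estimates already established. By Lemma~\ref{lemma_u_k} every term $\mathcal L_1(R_n(t+j/p))$ has $p$-adic valuation at least $(p+1+s)(n+1)+s\,v_p(n!)-M_0-3-\lfloor\log(n+1)/\log p\rfloor$, hence so does the first sum; by Lemma~\ref{lem:vpRntilde} the second sum has valuation exactly $(p+s)(n+1)+s\,v_p(n!)-M_0-2-N$. Subtracting, the first quantity exceeds the second precisely when $n+N>\lfloor\log(n+1)/\log p\rfloor$, which is clear since $n>(p+s)^4$ and $N\geqslant 0$. The ultrametric inequality then forces
\[
v_p(S_n)=v_p\!\left(\sum_{j=1}^{p-1}\widetilde R_n\!\left(\tfrac{j}{p}\right)\right)=(p+s)(n+1)+s\,v_p(n!)-M_0-2-N,
\]
which is the asserted identity; in particular $v_p(S_n)<\infty$, so $S_n\neq 0$.

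For the asymptotic statement I would plug in Legendre's formula $v_p(n!)=\frac{n}{p-1}+O(\log n)$, note that $M_0$ depends only on $p$ and $s$, and observe that \eqref{def:n(N)} gives $(p-1)p^N=(p-1+s)(n+1)-M_0-1$, so $p^N$ grows linearly in $n$ and $N=\log_p n+O(1)$. Substituting,
\[
v_p(S_n)=(p+s)n+\frac{s}{p-1}\,n+o(n)=\left(p+\frac{ps}{p-1}\right)n+o(n),
\]
equivalently $|S_n|_p=p^{-(p+ps/(p-1))n+o(n)}$ as $n\in I$, $n\to\infty$. I do not expect a genuine obstacle here: the substantive work is already done in Lemmas~\ref{lemma_u_k} and~\ref{lem:vpRntilde}, and the only point needing a moment's care is the comparison of the two valuation bounds, i.e.\ confirming that the Bernoulli-functional contribution is of strictly higher $p$-adic order than $\sum_{j}\widetilde R_n(j/p)$; the rest is bookkeeping together with standard estimates for $v_p(n!)$ and for the size of $N$ relative to $n$.
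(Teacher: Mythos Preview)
Your proposal is correct and follows essentially the same route as the paper's proof: decompose $S_n$ via Lemma~\ref{lem:comparison_Volkenborn_Bernoulli} into the Bernoulli-functional part and the boundary term $\sum_j\widetilde R_n(j/p)$, then use Lemmas~\ref{lemma_u_k} and~\ref{lem:vpRntilde} together with the ultrametric inequality to identify $v_p(S_n)$ with the valuation of the latter. Your explicit verification that $n+N>\lfloor\log(n+1)/\log p\rfloor$ and your derivation of $N=\log_p n+O(1)$ from~\eqref{def:n(N)} make the comparison step slightly more explicit than in the paper, but the argument is the same.
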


\begin{proof}
By Definition \ref{def:S_theta} and Lemma \ref{lem:comparison_Volkenborn_Bernoulli}, we have
\[ 
S_{j/p} = - \mathcal{L}_1 \left( R_n\left( t+\frac{j}{p} \right) \right) - \widetilde{R}_n\left( \frac{j}{p} \right) 
\]
for any $j \in \{1,2,\ldots,p-1\}$. 
Then, by Definition \ref{definition_linear_form_S_n} , we have
\[
S_n = - \sum_{j=1}^{p-1} \mathcal{L}_1 \left( R_n\left( t+\frac{j}{p} \right) \right) - \sum_{j=1}^{p-1} \widetilde{R}_n\left( \frac{j}{p} \right).
\]

For $n = n(N) \in I$, by Lemma \ref{lemma_u_k} and Lemma \ref{lem:vpRntilde}, we have
\[
v_p\left(\sum_{j=1}^{p-1} \widetilde{R}_n\left(\frac{j}{p}\right)\right)<v_p\left(\mathcal{L}_1\left( R_n\left(t+\frac{j}{p}\right) \right)\right)
\]
for any $j \in \{1,2,\ldots,p-1\}$, and hence
\[
v_p\left( S_n \right) = v_p\left( \sum_{j=1}^{p-1} \widetilde{R}_{n}\left(\frac{j}{p}\right) \right)=(p+s)(n+1)-M_0-2+s\cdot v_p(n!)-N.
\]
In particular, $S_n \neq 0$ for $n \in I$. 
Finally, noting that $N = O(\log n)$ and 
\[
v_p(n!) = \frac{n}{p-1} + O(\log n),
\]
we obtain 
\[  
|S_n|_p = p^{-(p+ps/(p-1))n + o(n)}
\]
as $n \in I$ and $n \to \infty$.
\end{proof}

\section{Archimedean properties}
\label{sec:Archimedean}
	
In the following, we will estimate the Archimedean growth of the coefficients $\rho_{i}$ of the linear forms in $1$ and $p$-adic zeta values as $n\to \infty$. 
We will also estimate the Archimedean growth of $\Phi_n$ appeared in Lemma \ref{lemma_r_ik}. 
		
\begin{lemma}
\label{lemma_rho_i_estimate}
We have
\[  
\max_{0 \leqslant i \leqslant p-1+s} |\rho_{i}| \leqslant  2^{sn} \cdot  p^{pn+o(n)} \quad \text{as~} n \to \infty. 
\]
\end{lemma}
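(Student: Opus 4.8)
The plan is to estimate each $\rho_i$ directly from its definition via the residue interpretation of the partial fraction coefficients $r_{i,k}$, bounding them crudely but with the correct exponential rates. Recall from \eqref{definition_rho_i} and \eqref{definition_rho_0} that $\rho_i = \sum_{k=1}^n r_{i,k}$ for $2 \leqslant i \leqslant p-1+s$, that $\rho_1 = 0$ by Lemma \ref{rho_1_is_zero}, and that $\rho_0 = \sum_{j=1}^{p-1}\rho_{0,j/p}$ with $\rho_{0,j/p}$ given by \eqref{definition_rho_0theta} as a double sum of terms $r_{i,k}/(\nu + j/p)^i$ over $0 \leqslant \nu < k \leqslant n$. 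Since $|\nu + j/p| \geqslant 1/p$, the triangle inequality reduces everything to controlling $\max_{i,k}|r_{i,k}|$: one gets $|\rho_i| \leqslant n \cdot \max_{i,k}|r_{i,k}|$ for $i \geqslant 2$, and $|\rho_0| \leqslant (p-1)\cdot n^2 \cdot p^{p-1+s}\cdot\max_{i,k}|r_{i,k}|$, both of which are $\max_{i,k}|r_{i,k}|$ up to a polynomial-in-$n$ factor, hence absorbed into the $p^{o(n)}$.

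The heart of the argument is therefore the bound $\max_{i,k}|r_{i,k}| \leqslant 2^{sn}\cdot p^{pn + o(n)}$. First I would use $r_{i,k} = \mathcal{D}_{p-1+s-i}\bigl(R_n(t)(t+k)^{p-1+s}\bigr)\big|_{t=-k}$ together with a Cauchy-estimate (writing the derivative as a contour integral over a circle of a fixed small radius $r < 1$ around $-k$, staying away from the other poles at $-1,\dots,-n$ by taking $r$, say, $1/3$): this gives $|r_{i,k}| \ll_{p,s} \max_{|t+k| = r}\bigl|R_n(t)(t+k)^{p-1+s}\bigr|$. Then I would bound $|R_n(t)|$ on that circle using Definition \ref{def:R_n(t)}: the factor $p^{pn}$ contributes $p^{pn}$; the factor $n!^s$ contributes $n!^s = p^{o(n)}\cdot (n/e)^{sn}\cdot\dots$ — wait, more carefully, $n!^s$ is of size roughly $(n!)^s$ which together with the denominator needs care. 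Actually the clean way: each product $\prod_{j=1}^{p-1}(t + j/p)_n / (t)_{n+1}$ and the extra $(t)_{n+1}^{-s}$ and $t^{M_0}$ should be grouped so that $n!^s/(t)_{n+1}^s$ appears as a ratio, and on $|t+k|=r$ with $k$ between $1$ and $n$ this ratio is bounded by something like $4^n$ or a similar constant base — this is where the $2^{sn}$ comes from, as each factor $n!/(t)_{n+1}$ contributes a base $\leqslant 4$ (in fact one can do the standard Apéry-type estimate $\bigl|n!/(t)_{n+1}\bigr| \leqslant \binom{2n}{n} \approx 4^n$ at worst near the middle, but actually with the shift one should get base $2$ per unit), giving $2^{sn}$ overall from the $s$-th power. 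The remaining factors $\prod_{j=1}^{p-1}(t+j/p)_n$ divided by the leftover $(t)_{n+1}^{p-1}$ should be $O(1)^n$ with a base absorbed into $p^{o(n)}$ or at worst into a constant that is itself $\leqslant p^{o(n)}$; and $t^{M_0}$ with $M_0 < (p+s)^4$ fixed is just $p^{o(n)}$ (indeed $O(1)$ as $n\to\infty$). Collecting: $p^{pn}\cdot 2^{sn}\cdot p^{o(n)}$, as claimed.

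The main obstacle I anticipate is getting the base of the geometric factor exactly right — specifically, showing that the product of Pochhammer ratios $n!^s \prod_j (t+j/p)_n / (t)_{n+1}^{p-1+s}$ evaluated near $t = -k$ grows no faster than $2^{sn} \cdot (\text{base } p^{o(n)})^n$, rather than with some larger base like $4^{sn}$ or $(2p)^n$. The delicate point is that we cannot afford a base larger than $2$ on the $n!^s$ part (the lemma is tight there, matching the $2^{sn}$), so the Cauchy-estimate radius and the grouping of numerator/denominator factors must be chosen so the $p-1$ "extra" Pochhammer factors $(t+j/p)_n$ exactly cancel $p-1$ of the $(t)_{n+1}$ factors up to a bounded-base error, leaving precisely $s$ ratios each bounded by base $2$. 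Once that bookkeeping is done correctly, the rest is routine: combine with the polynomial overhead from the $\rho_0$ double sum and conclude $\max_{0\leqslant i\leqslant p-1+s}|\rho_i| \leqslant 2^{sn}p^{pn + o(n)}$.
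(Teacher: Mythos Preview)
Your approach is correct and is essentially the paper's own: reduce to $\max_{i,k}|r_{i,k}|$ via the triangle inequality (exactly as you describe for $\rho_0$), express $r_{i,k}$ by Cauchy's formula on a small circle around $-k$, and bound $|R_n|$ factor by factor on that circle. The one point where the paper is cleaner than your sketch is precisely your flagged ``main obstacle'': instead of pairing each $(t+j/p)_n$ with a copy of $(t)_{n+1}$, the paper uses the rewriting \eqref{rewrite_R_n(t)}, $R_n(t)=n!^s\,t^{M_0}\cdot p^{-1}(pt)_{pn+1}/(t)_{n+1}^{p+s}$, so that on $|z+k|=\tfrac{1}{2p}$ one gets $|R_n(z)(z+k)^{p-1+s}|\leqslant \binom{n}{k}^s\cdot\dfrac{(pk)!(pn-pk)!}{k!^{p}(n-k)!^{p}}\cdot(\text{poly in }n)$, and the multinomial bounds $\binom{n}{k}\leqslant 2^n$, $(pk)!/k!^p\leqslant p^{pk}$ immediately give $2^{sn}p^{pn+o(n)}$ with no delicate bookkeeping.
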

	
\begin{proof}
We have $\rho_1 = 0$ by Lemma \ref{rho_1_is_zero}. 
On the other hand, by \eqref{definition_rho_i}, \eqref{definition_rho_0theta} and \eqref{definition_rho_0}, we have
\[ 
|\rho_i| \leqslant \sum_{k=1}^{n} |r_{i,k}|\quad \text{for~} i \in \{2,3,\ldots,p-1+s\}, 
\]
and
\begin{align*}
|\rho_0| &\leqslant \sum_{j=1}^{p-1} |\rho_{0,j/p}| \\
& \leqslant \sum_{j=1}^{p-1}\sum_{i=1}^{p-1+s}\sum_{k=1}^n\sum_{\nu=0}^{k-1} \frac{|r_{i,k}|}{\left|\nu+\frac{j}{p}\right|^{i}} \\
& \leqslant (p-1+s)^2n^2p^{p-1+s} \cdot \max_{1 \leqslant i \leqslant p-1+s \atop 1 \leqslant k \leqslant n} |r_{i,k}|.
\end{align*}
It suffices to prove that
\begin{equation}
\label{r_ik_Archi}
\max_{1 \leqslant i \leqslant p-1+s \atop 1 \leqslant k \leqslant n} |r_{i,k}| \leqslant 2^{sn} \cdot p^{pn+o(n)} \quad \text{as~} n \to \infty.
\end{equation}
		
Now, fix any $i \in \{1,2,\ldots,p-1+s\}$ and any $k \in \{1,2,\ldots,n\}$. 
By \eqref{def:r_ik} and Cauchy's integral formula, we have
\[ 
r_{i,k} = \frac{1}{2\pi \sqrt{-1}} \int_{|z+k|=\frac{1}{2p}} (z+k)^{i-1}R_n(z) \mathrm{d}z, 
\]
and hence
\[ 
|r_{i,k}| \leqslant \sup_{z \in \mathbb{C}:~|z+k|=\frac{1}{2p}} |R_n(z)|. 
\]
		
By \eqref{rewrite_R_n(t)}, we have
\[ 
R_n(z) = n!^s \cdot z^{M_0}\cdot \frac{p^{-1} \cdot (pz)_{pn+1}}{(z)_{n+1}^{p+s}}. 
\]
Using triangle inequality, it is easy to show that for any complex number $z$ with $|z+k|=1/(2p)$, we have
\begin{align*}
|z^{M_0}| &\leqslant (n+1)^{M_0},\\
|(pz)_{pn+1}| &\leqslant (pk+1)!(pn-pk+1)! < (pk)! (pn-pk)! \cdot (2pn)^2, \\
|(z)_{n+1}| &\geqslant \left( \frac{1}{2p} \right)^3 \cdot \frac{k!(n-k)!}{n^2} > \frac{k!(n-k)!}{(2pn)^3}
\end{align*}
Therefore, we have
\begin{align*}
\sup_{z \in \mathbb{C}:~|z+k|=\frac{1}{2p}} |R_n(z)| &\leqslant \binom{n}{k}^s \cdot \frac{(pk)!}{k!^{p}} \cdot \frac{(pn-pk)!}{(n-k)!^p} \cdot (2pn)^{3p+3s+M_0+2} \\
&\leqslant 2^{sn} \cdot p^{pk} \cdot p^{pn-pk} \cdot (2pn)^{3p+3s+M_0+2}.
\end{align*}
It follows that \eqref{r_ik_Archi} holds. 
The proof of Lemma \ref{lemma_rho_i_estimate} is complete.
\end{proof}
	
\bigskip
	
It is well known that the prime number theorem implies
\begin{equation}
\label{d_n_Archi}
d_n = e^{n+o(n)} \quad \text{as~} n \to \infty.
\end{equation}
As usual, we denote the Euler–Mascheroni constant by $\gamma$. 
Let
\begin{equation}
\label{def_digamma}
\psi(x) = \frac{\Gamma'(x)}{\Gamma(x)} = -\gamma + \sum_{m=0}^{\infty}\left( \frac{1}{m+1} - \frac{1}{m+x} \right)
\end{equation}
be the digamma function. 
The following lemma is also a corollary of the prime number theorem (cf. \cite[Lemma 4.4]{Zud2002}).
\begin{lemma}
\label{lemma_Phi_n}
The limiting relation
\[ 
\lim_{n \to \infty} \frac{1}{n} \sum_{\sqrt{Cn} < q \leqslant n \atop \{ n/q \} \in [u,v) } \log q =  \psi(v) - \psi(u) + \frac{1}{v} - \frac{1}{u}  
\]
holds for any constant $C \geqslant 0$ and any interval $[u,v) \subset (0,1)$. 
Here $q$ runs only along primes and $\{n/q\}$ is the fractional part of $n/q$.
\end{lemma}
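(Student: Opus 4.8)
The plan is to deduce this from the prime number theorem in the form $\vartheta(x):=\sum_{q\leqslant x}\log q = x+o(x)$ (equivalent to \eqref{d_n_Archi}), after removing the fractional-part condition by fibring the sum over the value $m=\lfloor n/q\rfloor$. Concretely, for a prime $q\leqslant n$ one has $\{n/q\}\in[u,v)$ if and only if $m:=\lfloor n/q\rfloor$ satisfies $n/(m+v)<q\leqslant n/(m+u)$; here $m\geqslant 1$ since $q\leqslant n$, and conversely — using $u>0$ and $v\leqslant 1$ — every prime $q$ in the interval $(n/(m+v),\,n/(m+u)]$ has $\lfloor n/q\rfloor=m$ and $\{n/q\}\in[u,v)$. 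Since $n/(m+u)<n$, the sum of the lemma therefore equals $\sum_{m\geqslant 1}\bigl(\vartheta(n/(m+u))-\vartheta(\max(\sqrt{Cn},\,n/(m+v)))\bigr)$, where each summand is read as $0$ once its $q$-interval becomes empty.

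Next I would fix a large integer $M$ (independent of $n$) and split according to $m\leqslant M$ versus $m>M$, i.e.\ $q>n/(M+1)$ versus $q\leqslant n/(M+1)$. For all large $n$ one has $n/(M+1)>\sqrt{Cn}$, so on the range $m\leqslant M$ the cut-off $\sqrt{Cn}$ does not bind, and since there are only finitely many such $m$, the prime number theorem yields
\[
\frac{1}{n}\sum_{m=1}^{M}\Bigl(\vartheta\bigl(\tfrac{n}{m+u}\bigr)-\vartheta\bigl(\tfrac{n}{m+v}\bigr)\Bigr) = \sum_{m=1}^{M}\left(\frac{1}{m+u}-\frac{1}{m+v}\right)+o(1).
\]
For the tail $m>M$ I would discard the condition $\{n/q\}\in[u,v)$ altogether and bound its contribution from above by $\tfrac1n\sum_{\sqrt{Cn}<q\leqslant n/(M+1)}\log q = \tfrac1n\bigl(\vartheta(n/(M+1))-\vartheta(\sqrt{Cn})\bigr)=\tfrac{1}{M+1}+o(1)$, and from below by $0$.

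Combining, $\tfrac1n$ times the sum of the lemma lies between $\sum_{m=1}^{M}\bigl(\tfrac1{m+u}-\tfrac1{m+v}\bigr)+o(1)$ and $\sum_{m=1}^{M}\bigl(\tfrac1{m+u}-\tfrac1{m+v}\bigr)+\tfrac1{M+1}+o(1)$; letting $n\to\infty$ and then $M\to\infty$ shows the limit exists and equals $\sum_{m=1}^{\infty}\bigl(\tfrac1{m+u}-\tfrac1{m+v}\bigr)$. Finally, the series representation \eqref{def_digamma} gives
\[
\psi(v)-\psi(u) = \sum_{m=0}^{\infty}\left(\frac{1}{m+u}-\frac{1}{m+v}\right) = \left(\frac1u-\frac1v\right)+\sum_{m=1}^{\infty}\left(\frac{1}{m+u}-\frac{1}{m+v}\right),
\]
so that $\sum_{m=1}^{\infty}\bigl(\tfrac1{m+u}-\tfrac1{m+v}\bigr)=\psi(v)-\psi(u)+\tfrac1v-\tfrac1u$, which is the asserted value. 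The case $C=0$ is handled identically, with $\sqrt{Cn}=0$.

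The step that will require the most care is the tail estimate: one cannot afford to apply the prime number theorem separately to each of the $\asymp\sqrt{n/C}$ values of $m$ that genuinely occur, since the accumulated error terms would swamp the main term. The point is that, once the fractional-part condition is dropped, the tail telescopes into the single difference $\vartheta(n/(M+1))-\vartheta(\sqrt{Cn})$, which the prime number theorem controls with room to spare. Everything else — in particular the exact correspondence between primes $q$ and pairs $(m,q)$, which is precisely where the hypothesis $[u,v)\subset(0,1)$ enters — is routine bookkeeping with $\vartheta$.
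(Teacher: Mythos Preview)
The paper does not actually prove this lemma: it is stated as ``a corollary of the prime number theorem (cf.\ \cite[Lemma 4.4]{Zud2002})'' and no argument is given. Your proof is correct and is exactly the standard argument behind that citation: fibre over $m=\lfloor n/q\rfloor$, apply $\vartheta(x)=x+o(x)$ to the finitely many terms with $m\leqslant M$, crudely bound the tail by a single $\vartheta$-difference, and then identify $\sum_{m\geqslant 1}\bigl(\tfrac{1}{m+u}-\tfrac{1}{m+v}\bigr)$ via \eqref{def_digamma}. The bookkeeping you flag---that $[u,v)\subset(0,1)$ guarantees the intervals $(n/(m+v),\,n/(m+u)]$ are disjoint in $m$ and that each prime in such an interval has $\lfloor n/q\rfloor=m$---is correct, as is the observation that the tail must be handled as a block rather than term-by-term.
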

	
Next, we estimate the Archimedean growth of $\Phi_n$ appeared in Lemma \ref{lemma_r_ik}. 
	
\begin{lemma}
\label{Phi_n_Archi}
We have 
\begin{align*}
\Phi_n &= e^{\varpi_pn + o(n)} \quad \text{as~} n \to \infty,
\end{align*}
where 
\[ 
\varpi_p = \psi\left( \frac{1}{p} \right)  + 2p-1 + \gamma + p\left( \log p - \sum_{j=1}^{p} \frac{1}{j} \right). 
\]
\end{lemma}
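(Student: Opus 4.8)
The plan is to take the logarithm of \eqref{def:Phi_n} and reduce the estimate to Lemma~\ref{lemma_Phi_n}. We have
\[
\log\Phi_n=\sum_{\substack{\sqrt{pn}<q\leqslant n\\ q\text{ prime}}}\phi\!\left(\frac nq\right)\log q ,
\]
so the first task is to rewrite the weight $\phi$ of \eqref{def:phi} in a form adapted to Lemma~\ref{lemma_Phi_n}. A direct inspection of the cases in \eqref{def:phi} shows that, for every real $x$,
\[
\phi(x)=\sum_{m=2}^{p-1}\mathbf{1}_{[m/p,\,1)}\bigl(\{x\}\bigr),
\]
since when $\{x\}\in[j/p,(j+1)/p)$ the number of indices $m\in\{2,\dots,p-1\}$ with $m/p\leqslant\{x\}$ equals $j-1$ (and it equals $0$ when $\{x\}<2/p$). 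Substituting this and interchanging the order of summation,
\[
\log\Phi_n=\sum_{m=2}^{p-1}\ \sum_{\substack{\sqrt{pn}<q\leqslant n\\ \{n/q\}\in[m/p,\,1)}}\log q .
\]

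Next I would apply Lemma~\ref{lemma_Phi_n} with $C=p$ to each of the $p-2$ inner sums. Lemma~\ref{lemma_Phi_n} is phrased for intervals $[u,v)\subset(0,1)$, but its right-hand side is finite at the endpoint $v=1$ and the assertion extends there: applying the lemma on $[m/p,1-\varepsilon)$ and letting $\varepsilon\to0^{+}$, the residual sum over the primes with $\{n/q\}\in[1-\varepsilon,1)$ becomes negligible relative to $n$ as $\varepsilon\to0$ (this is the same bookkeeping that occurs in Zudilin's estimates of such products, cf.\ \cite{Zud2002}). Consequently
\[
\lim_{n\to\infty}\frac{\log\Phi_n}{n}=\sum_{m=2}^{p-1}\left(\psi(1)-\psi\!\left(\frac mp\right)+1-\frac pm\right).
\]

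Finally, I would evaluate this finite sum in closed form. Using $\psi(1)=-\gamma$, the rewriting $\sum_{m=2}^{p-1}\frac1m=\sum_{j=1}^{p}\frac1j-1-\frac1p$, and the identity
\[
\sum_{m=1}^{p-1}\psi\!\left(\frac mp\right)=-(p-1)\gamma-p\log p
\]
(which follows from the digamma multiplication formula $\psi(pz)=\log p+\frac1p\sum_{k=0}^{p-1}\psi\bigl(z+\frac kp\bigr)$ upon letting $z\to0^{+}$ and cancelling the $1/z$ poles on both sides), a short computation turns $\sum_{m=2}^{p-1}\bigl(\psi(1)-\psi(m/p)+1-p/m\bigr)$ into
\[
\psi\!\left(\frac1p\right)+2p-1+\gamma+p\left(\log p-\sum_{j=1}^{p}\frac1j\right)=\varpi_p .
\]
Hence $\log\Phi_n=\varpi_p n+o(n)$, which is the claim. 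I expect the only point requiring genuine care to be the extension of Lemma~\ref{lemma_Phi_n} to the endpoint $v=1$ (equivalently, showing that almost no primes $q\leqslant n$ have $n/q$ extremely close to an integer); everything else is elementary manipulation of digamma values.
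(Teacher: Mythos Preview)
Your proof is correct and follows essentially the same route as the paper: both reduce $\log\Phi_n$ to Lemma~\ref{lemma_Phi_n} and then simplify via the digamma identities $\psi(1)=-\gamma$ and $\sum_{m=1}^{p}\psi(m/p)=-p(\gamma+\log p)$. The only difference is cosmetic. The paper splits $\phi$ according to its constant values on the intervals $[j/p,(j{+}1)/p)$, obtaining
\[
\sum_{j=2}^{p-1}(j-1)\left(\psi\!\left(\tfrac{j+1}{p}\right)-\psi\!\left(\tfrac{j}{p}\right)+\tfrac{p}{j+1}-\tfrac{p}{j}\right),
\]
and then performs an Abel summation; your layer-cake decomposition $\phi=\sum_{m=2}^{p-1}\mathbf{1}_{[m/p,1)}(\{\,\cdot\,\})$ lands directly on the already-summed form $\sum_{m=2}^{p-1}\bigl(\psi(1)-\psi(m/p)+1-p/m\bigr)$, saving that step. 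Finally, your concern about extending Lemma~\ref{lemma_Phi_n} to $v=1$ is unnecessary: as sets, $[m/p,1)\subset(0,1)$, so the lemma applies verbatim and no limiting argument is needed.
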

	
\begin{proof}
By \eqref{def:Phi_n}, we have
\[ 
\Phi_{n} = \prod_{\sqrt{pn} < q \leqslant n \atop q~\text{prime}} q^{\phi\left( \frac{n}{q} \right)}, 
\]
where $\phi(x)$ is the piecewise constant function defined in \eqref{def:phi}. 
Then Lemma \ref{lemma_Phi_n} implies that
\begin{align*}
\lim_{n \to \infty} \frac{\log \Phi_n}{n} &=  \sum_{j=2}^{p-1} (j-1)\left( \psi\left( \frac{j+1}{p} \right) - \psi\left( \frac{j}{p} \right) + \frac{p}{j+1}  - \frac{p}{j} \right) \\
&= (p-2)\psi\left( 1 \right) +p-2 - \sum_{j=2}^{p-1} \psi\left( \frac{j}{p} \right) - p\sum_{j=2}^{p-1}\frac{1}{j} \\
&= \psi\left( \frac{1}{p} \right) + (p-1)\psi(1) +2p -1 - \sum_{j=1}^{p} \psi\left( \frac{j}{p} \right) - p\sum_{j=1}^{p}\frac{1}{j}.
\end{align*}
Using the following well-known identities (they follows easily from \eqref{def_digamma})
\begin{align*}
\psi(1) &= -\gamma, \\
\sum_{j=1}^{p} \psi\left( \frac{j}{p} \right) &= -p\left( \gamma +\log p \right),
\end{align*}
we obtain
\[ 
\lim_{n \to \infty} \frac{\log \Phi_n}{n} = \psi\left( \frac{1}{p} \right)  + 2p-1 + \gamma + p\left( \log p - \sum_{j=1}^{p} \frac{1}{j} \right) = \varpi_p. 
\]
In other words, 
\[ 
\Phi_n = e^{\varpi_p n +o(n)}. 
\]
\end{proof}
	
We have the following estimates for $\varpi_p$:
\begin{lemma}
\label{lem:estimate_varpi_p}
We have
\[
(p-1)(1-\gamma)-\log 2-1< \varpi_p < (p-1)(1-\gamma).
\]
\end{lemma}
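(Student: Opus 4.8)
The plan is to obtain the bounds on $\varpi_p$ directly from the closed form
\[
\varpi_p = \psi\!\left(\tfrac1p\right) + 2p - 1 + \gamma + p\left(\log p - \sum_{j=1}^{p}\frac1j\right),
\]
by rewriting each $p$-dependent piece in a way that makes the comparison with $(p-1)(1-\gamma)$ transparent. First I would use the reflection/recurrence structure of the digamma function: the identity $\psi(x+1)=\psi(x)+1/x$ gives $\psi(1/p) = \psi(1 + 1/p) - p = \psi(1) + \bigl(\psi(1+\tfrac1p)-\psi(1)\bigr) - p$, and since $\psi(1)=-\gamma$ this turns the leading $2p$ and the stray $\psi(1/p)$ into $p-1 + \bigl(\psi(1+\tfrac1p)+\gamma\bigr) - \gamma$ after collecting terms. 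Concretely, I expect the algebra to collapse to something like
\[
\varpi_p = (p-1)(1-\gamma) \;+\; \Bigl(\psi\!\bigl(\tfrac1p\bigr) + \gamma + p\Bigr) \;+\; p\Bigl(\log p - \sum_{j=1}^{p}\tfrac1j\Bigr),
\]
so that the whole problem reduces to estimating the two correction terms $E_1 := \psi(1/p)+\gamma+p$ and $E_2 := p\bigl(\log p - \sum_{j=1}^p 1/j\bigr)$, and showing $-\log 2 - 1 < E_1 + E_2 < 0$.

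For $E_2$, the key input is the standard estimate $\sum_{j=1}^p \frac1j = \log p + \gamma + \tfrac{1}{2p} + O(p^{-2})$, but to get clean one-sided inequalities for \emph{all} $p\geqslant 5$ rather than just asymptotically I would instead use the elementary sandwich $\log p < \sum_{j=1}^{p-1}\frac1j$ and $\sum_{j=2}^{p}\frac1j < \log p$ (integral comparison of $1/x$), which give $\frac1p - 1 < \log p - \sum_{j=1}^p \frac1j < 0$, hence $1 - p < E_2 < 0$; combined with monotonicity of the remaining slack this should be refined enough. For $E_1$, I would use the series $\psi(1/p) = -\gamma - p + \sum_{m\geqslant 1}\bigl(\frac1m - \frac{1}{m+1/p}\bigr) = -\gamma - p + \sum_{m\geqslant 1}\frac{1/p}{m(m+1/p)}$, so $E_1 = \sum_{m\geqslant 1}\frac{1}{pm(m+1/p)}$, which is a positive quantity bounded above by $\frac1p\sum_{m\geqslant 1}\frac{1}{m^2} = \frac{\pi^2}{6p}$ and below by $\frac{1}{p(1+1/p)} = \frac{1}{p+1}$. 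Thus $0 < E_1 < \pi^2/(6p)$, which for $p\geqslant 5$ is comfortably small. Adding the two ranges gives the desired two-sided bound once one checks the numerics for the smallest primes $p = 5, 7, 11, 13$ by hand (where the crude $E_2$ bound is weakest) and then argues monotonicity for larger $p$.

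The main obstacle I anticipate is not any single estimate but the bookkeeping: making the $E_2$ lower bound $E_2 > 1 - p$ interact correctly with the target $-\log 2 - 1$, since naively $E_1 + E_2$ could be as negative as roughly $1 - p$, which is far below $-\log 2 - 1$ for large $p$. This means the crude bound $E_2 > 1-p$ is \emph{not} good enough and I will need the sharper asymptotic $\log p - \sum_{j=1}^p\frac1j = -\frac{1}{2p} - \frac{1}{12p^2} + \cdots$, giving $E_2 = -\frac12 - \frac{1}{12p} + o(1/p)$, so that $E_1 + E_2 \to -\tfrac12$ and in fact stays in $(-\log 2 - 1, 0)$ for every $p\geqslant 5$. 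So the real content is: (i) prove $E_2$ lies strictly between $-\log 2 - 1$ and $0$ for all $p\geqslant 5$ using a two-term Euler–Maclaurin bound with explicit error control, (ii) prove $0 < E_1 < $ something tiny, (iii) check that their sum does not dip below $-\log 2 - 1$, which amounts to verifying $E_2 + E_1 > -\log 2 - 1$, i.e. that $E_2$ alone never gets below roughly $-1$; since $E_2 > -1$ follows from $\sum_{j=1}^p \frac1j < \log p + 1$, i.e. $\log p + 1 > \sum 1/j$, which is clear, the lower bound is actually safe. I would therefore structure the write-up as: reduce $\varpi_p$ to $(p-1)(1-\gamma) + E_1 + E_2$; bound $E_1$ via the positive series; bound $E_2$ via integral comparison on both sides; conclude.
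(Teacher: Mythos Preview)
Your decomposition is wrong, and this error contaminates the rest of the plan. With your definitions $E_1=\psi(1/p)+\gamma+p$ and $E_2=p\bigl(\log p-\sum_{j=1}^p 1/j\bigr)$, a direct substitution into the formula for $\varpi_p$ gives
\[
\varpi_p=(p-1)+E_1+E_2,
\]
not $(p-1)(1-\gamma)+E_1+E_2$; the two expressions differ by $(p-1)\gamma$. Consequently the lemma is \emph{not} equivalent to $-\log 2-1<E_1+E_2<0$, but to
\[
-(p-1)\gamma-\log 2-1<E_1+E_2<-(p-1)\gamma.
\]
In particular your target inequality $E_1+E_2\in(-\log 2-1,0)$ is simply false for $p\geqslant 5$: since $H_p-\log p\to\gamma$ one has $E_2\sim -p\gamma\to-\infty$, and $E_1$ is bounded. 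The same slip reappears twice downstream: your ``sharper asymptotic'' $\log p-\sum_{j=1}^p 1/j=-\tfrac{1}{2p}-\tfrac{1}{12p^2}+\cdots$ drops the dominant $-\gamma$ term (contradicting the correct estimate you quoted two sentences earlier), and your claim that $H_p<\log p+1$ implies $E_2>-1$ forgets the factor $p$ in front and only yields $E_2>-p$.

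The approach is easily repaired once the bookkeeping is straightened out. Using $H_{n}-\log(n+1)\nearrow\gamma$ and $H_n-\log n\searrow\gamma$ one gets $-\gamma-\tfrac1p<\log p-H_p<-\gamma$, hence $-p\gamma-1<E_2<-p\gamma$; together with $0<E_1<\pi^2/(6p)<\gamma$ for $p\geqslant 5$ this gives exactly the corrected range above. The paper's own proof avoids the decomposition altogether: it quotes the inequality $\log(x+\tfrac12)-\tfrac1x<\psi(x)<\log(x+e^{-\gamma})-\tfrac1x$ to obtain $-\log 2-p<\psi(1/p)<-p$ directly, combines this with the same harmonic-sum bounds $-\gamma-\tfrac1p<\log p-H_p<-\gamma$, and reads off the result in two lines. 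That route is shorter and needs no case-checking for small primes.
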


\begin{proof}
By \cite[Cor. 3]{EGP2000} and $\psi(x+1)=\psi(x)+1/x$, we have for $x>0$ the inequality
\[
\log\left(x+\frac{1}{2}\right)- \frac{1}{x} < \psi(x) < \log\left(x+e^{-\gamma}\right)-\frac{1}{x}.
\]
In particular, we have for $x=1/p$ the estimates
\[
-\log 2 -p < \psi\left(\frac{1}{p}\right) <-p.
\]
Furthermore, we have
\[
\log p-\sum_{j=1}^p \frac{1}{j} < -\gamma < \log p -\sum_{j=1}^{p-1} \frac{1}{j}.
\]
Combining the last two estimates with the definition of $\varpi_p$ gives:
\[
-\log2-p +2p -1 +\gamma +p\left( -\gamma-\frac{1}{p} \right) < \varpi_p < -p +2p-1+\gamma+p(-\gamma).
\]
After simplifying both terms in the last inequality, we obtain the inequality in the statement of Lemma \ref{lem:estimate_varpi_p}.
\end{proof}

\section{Proof of the main theorem}
\label{sec:proof}
	
\begin{proof}[Proof of Theorem \ref{main_thm}]
For any integer $n$ in the set $I$ defined by \eqref{def:I}, we define
\[ 
\widehat{S}_n := \Phi_{n}^{-1} d_n^{p-1+s} \cdot S_n.  
\]
Then Lemma \ref{lemma_linear_form_S_n} implies
\[ 
\widehat{S}_n = \widehat{\rho}_{0} + \sum_{3 \leqslant i \leqslant p-1+s \atop i \text{~odd}} \widehat{\rho}_i \cdot p^{i} \zeta_p(i), 
\]
where
\[ 
\widehat{\rho}_{i} = \Phi_{n}^{-1} d_n^{p-1+s} \cdot \rho_i, \quad i \in \{ 0,1, 2,\ldots,p-1+s\}.
\]
By Lemma \ref{rho_1_is_zero}, Lemma \ref{lemma_rho_i} and Lemma \ref{lemma_integrality_rho_0}, we have $\widehat{\rho}_{i} \in \mathbb{Z}$ for all $i \in \{ 0,1,2,\ldots,p-1+s\}$.

As $n \in I$ and $n \to \infty$, by Eq. \eqref{d_n_Archi}, Lemma \ref{lemma_rho_i_estimate} and Lemma \ref{Phi_n_Archi}, we have
\[  
\max_{0 \leqslant i \leqslant p-1+s} |\widehat{\rho}_{i}| \leqslant 2^{sn} \cdot  p^{pn} \cdot e^{(p-1+s-\varpi_p)n + o(n)}.  
\]
On the other hand, Lemma \ref{lemma_p_adic_norm_S_n} implies 
\[ 
|\widehat{S}_{n}|_p = p^{-(p+ps/(p-1))n + o(n)}, 
\]
and $\widehat{S}_{n} \neq 0$ for $n \in I$.

Therefore, if the positive integer $s$ satisfies
\begin{equation}
\label{sss}
2^{s}p^{p}e^{p-1+s-\varpi_p}p^{-(p+ps/(p-1))} < 1,
\end{equation}
then 
\[ 
\max_{0 \leqslant i \leqslant p-1+s} |\widehat{\rho}_{i}| \cdot |\widehat{S}_{n}|_p  \to 0 \text{~as~} n \in I \text{~and~} n \to \infty,  
\]
and Lemma \ref{lem:irrationalityCrit} will imply that there exists an odd integer $i \in [3,p-1+s]$ such that $\zeta_p(i)$ is irrational. 
		
Take
\[ 
s = \left\lfloor \frac{p-1-\varpi_p}{\frac{p}{p-1}\log p -1 -\log 2}  \right\rfloor + 1, 
\]
which is the smallest integer satisfying \eqref{sss}. 
By Lemma \ref{lem:estimate_varpi_p} and $p\geqslant 5$ we know that this $s$ is positive. 
Then we conclude that there exists an odd integer $i \in [3,c_p]$ such that $\zeta_p(i)$ is irrational, where
\[ 
c_p = p + \frac{p-1-\varpi_p}{\frac{p}{p-1}\log p - 1 -\log 2}. 
\]
The proof of Theorem \ref{main_thm} is complete.
\end{proof}

\begin{remark}
\label{rem:bound_cp}
Using the estimates for $\varpi_p$ from Lemma \ref{lem:estimate_varpi_p}, it is not difficult to check that
\[
\text{(the greatest odd integer not exceeding $c_p$)} \leqslant p+\frac{p}{\log p} + 5
\]
for any $p\geqslant 5$.
Moreover, we have
\[
c_p =p + (\gamma+o(1))\cdot\frac{p}{\log p} \quad\text{as}\ p \to \infty,
\]
where $\gamma=0.577\ldots$ is the Euler–Mascheroni constant. 
\end{remark}	

\begin{acknowledgements}
The third author gratefully acknowledges the support through the DFG funded Collaborative Research Center SFB 1085 ’Higher Invariants’.
\end{acknowledgements}

\vspace*{3mm}
\begin{flushright}
\begin{minipage}{148mm}\sc\footnotesize
L.\,L., Beijing International Center for Mathematical Research, \\
Peking University, Beijing, China\\
{\it E--mail address}: {\tt lilaimath@gmail.com} \vspace*{3mm}
\end{minipage}
\end{flushright}
	
\begin{flushright}
\begin{minipage}{148mm}\sc\footnotesize
C.\,L., Beijing Institute of Mathematical Sciences and Applications (BIMSA) \&\\
Yau Mathematical Sciences Center (YMSC), Tsinghua University, Beijing, China \\
{\it E--mail address}: {\tt lupucezar@gmail.com,~lupucezar@bimsa.cn} \vspace*{3mm}
\end{minipage}
\end{flushright}
	
\begin{flushright}
\begin{minipage}{148mm}\sc\footnotesize
J.\,S., Department of Mathematics, University of Duisburg-Essen, Essen, Germany \\
{\it E--mail address}: {\tt johannes.sprang@uni-due.de } \vspace*{3mm}
\end{minipage}
\end{flushright}
	
\end{document}